\definecolor{darkgreen}{rgb}{0.008,0.417,0.067}
\numberwithin{equation}{section}
\newtheorem{thm}{Theorem}[section]
\newtheorem{cor}[thm]{Corollary}
\newtheorem{lem}[thm]{Lemma}
\newtheorem{prp}[thm]{Proposition}
\theoremstyle{definition}
\theoremstyle{remark}
\numberwithin{equation}{section}
\newcommand{\CC}{\mathbb{C}}
\newcommand{\NN}{\mathbb{N}}
\newcommand{\ZZ}{\mathbb{Z}}
\newcommand{\Ff}{\mathcal{F}}
\newcommand{\Kk}{\mathcal{K}}
\newcommand{\Ll}{\mathcal{L}}
\newcommand{\Mm}{\mathcal{M}}
\newcommand{\Oo}{\mathcal{O}}
\newcommand{\Tt}{\mathcal{T}}
\newcommand{\eps}{\varepsilon}
\newcommand{\nn}{\mathbf{n}}
\newcommand{\Aut}{\operatorname{Aut}}
\newcommand{\clsp}{\overline{\lsp}}
\newcommand{\coker}{\operatorname{coker}}
\newcommand{\id}{\operatorname{id}}
\newcommand{\lsp}{\operatorname{span}}
\newcommand{\Img}{\operatorname{Img}}
\newcommand{\sign}{\operatorname{sign}}
\newcommand{\rg}{\operatorname{rg}}
\newcommand{\varphiY}{\varphi^Y}
\newcommand{\hatimes}{\mathbin{\widehat{\otimes}}}
\newcommand{\KHgr}[1]{K_{\operatorname{gr}}^{#1}}
\newcommand{\KTgr}[1]{K^{\operatorname{gr}}_{#1}}
\title[Graded $K$-theory and $K$-homology of Cuntz--Pimsner algebras]{Graded $K$-theory and $K$-homology of relative Cuntz--Pimsner algebras and graph $C^*$-algebras}
\author{Quinn Patterson}
\address[Q.~Patterson, A.~Sierakowski, A.~Sims and J.~Taylor]{School of Mathematics and Applied Statistics\\
	University of Wollongong\\
	NSW  2522\\
	AUSTRALIA}
\email{qrp844@uowmail.edu.au, asierako@uow.edu.au, asims@uow.edu.au, \newline jpt812@uowmail.edu.au}
\author{Adam Sierakowski}
\author{Aidan Sims}
\author{Jonathan Taylor}
\keywords{$KK$-theory; graded $K$-theory; $C^*$-algebra; graded $C^*$-algebra}
\subjclass[2010]{Primary 46L05, 19K35}
\thanks{We thank Ralf Meyer for his helpful comments and suggestions. This research was supported by ARC Discovery Project DP180100595.}
\date{\today}
\begin{document}

\begin{abstract}
We establish exact sequences in $KK$-theory for graded relative Cuntz--Pimsner algebras
associated to nondegenerate right-Hilbert bimodules. We use this to calculate the graded
$K$-theory and $K$-homology of relative Cuntz--Krieger algebras of directed graphs for gradings
induced by $\{0,1\}$--valued labellings of their edge sets.
\end{abstract}

\maketitle

\section*{Introduction}

In the study of $C^*$-algebras, operator $K$-theory, which generalises topological $K$-theory via
Gelfand duality, has long been a key invariant. Indeed, it is a knee-jerk reaction for
$C^*$-algebraist these days, when presented with a new example, to try to compute its $K$-theory;
and the $K$-theory frequently reflects key structural properties. For example, the ordered
$K$-theory of an irrational rotation algebra recovers the angle of rotation up to a minus sign
\cite{Elliott, E}, while the $K$-theory of a Cuntz--Krieger algebra recovers the Bowen--Franks
group of the associated shift space \cite{CunKri, Cun81}. Cuntz proved that the $K$-theory groups
of the $C^*$-algebra of a finite directed graph $E$ with no sources and with $\{0,1\}$-valued
adjacency matrix $A_E$ are the cokernel and kernel of the matrix $1 - A^t_E$ regarded as an
endomorphism of the free abelian group $\ZZ E^0$ (see \cite[Proposition~3.1]{Cun81}). This was
generalised to row-finite directed graphs $E$ with no sources in \cite{PR}, to all row-finite
directed graphs $E$ in \cite{RSz}, and to arbitrary graphs (with appropriate adjustments made to
the domain and the codomain of $A^t_E$) in \cite{BHRS, DT}, see also \cite{EL2, Yi, Szym, Szym02,
Raeburn}.

Dual to $K$-theory is the $K$-homology theory that emerged in the pioneering work of
Brown--Douglas--Fillmore \cite{BDF1, BDF2}. It is less of an automatic reaction to compute
$K$-homology for $C^*$-algebras, but, for example, Cuntz and Krieger computed (in
\cite[Theorem~5.3]{CunKri}) the $\operatorname{Ext}$-group (that is, the odd $K$-homology group)
of the Cuntz--Krieger algebra $\Oo_A$ of $A \in M_n(\ZZ_+)$ as the cokernel of $1 - A$ regarded as
an endomorphism of $\ZZ^n$. The computation was later generalised to graph $C^*$-algebras in
\cite{Tom, DT} (see also \cite{Yi, Crisp}).

Both $K$-theory and $K$-homology are unified in Kasparov's $KK$-theory \cite{K1, K2}: the
$K$-theory and $K$-homology groups of $A$ are the Kasparov groups $KK_*(\CC, A)$ and $KK_*(A,
\CC)$ respectively. So Kasparov's theory provides a unified approach to calculating $K$-theory and
$K$-homology. Pimsner exploited this in \cite{P}, developing two exact sequences in $KK$-theory
for the $C^*$-algebra $\Oo_X$ associated to a right-Hilbert $A$--$A$-bimodule $X$, and using one
of them to compute the $K$-theory of $\Oo_X$ in terms of that of $A$. Every graph determines an
associated Hilbert module, and while the Pimsner algebra of this module only agrees with the graph
$C^*$-algebra when the graph has no sources, Muhly and Tomforde developed a modified bimodule
\cite{MT} whose Pimsner algebra always contains the graph $C^*$-algebra as a full corner. In
particular, combining these results provides a new means of computing the $K$-theory and
$K$-homology of graph $C^*$-algebras.

Kasparov's $KK$-theory is most naturally a theory for graded $C^*$-algebras, and the results
described above are obtained by endowing the $C^*$-algebras involved with the trivial grading.
However, graph $C^*$-algebras admit many natural gradings: by the universal property of $C^*(E)$,
every binary labelling $\delta : E^1 \to \{0,1\}$ of the edges of $E$ induces a grading
automorphism that sends the generator $s_e$ associated to an edge $e$ to $(-1)^{\delta(e)} s_e$.
More generally, every grading of a right-Hilbert bimodule induces a grading of the associated
Pimsner algebra.

In \cite{KPS6}, Kumjian, Pask and Sims investigated graded $K$-theory and $K$-homology, defined in
terms of Kasparov theory (other approaches to graded $K$-theory are investigated in, for example,
\cite{vD1, vD2, Karoubi}) of graded graph $C^*$-algebras, extending earlier results of \cite{H1,
H2} for the Cuntz algebras $\Oo_n$. By extending Pimsner's arguments to $C^*$-algebras of graded
Hilbert bimodules with injective left actions by compacts, they computed the graded $K$-theory of
the $C^*$-algebras of row-finite graphs with no sources. They showed (in
\cite[Collollary~8.3]{KPS6}) that if $E$ is a row-finite directed graph with no sources,
$\alpha_\delta$ is the grading associated with a given function $\delta : E^1 \to \{0,1\}$, and
$A_E^\delta$ is the $E^0 \times E^0$ matrix with entries $A^\delta_E(v,w) = \sum_{e \in v E^1 w}
\delta(e)$, then the graded $K$-theory groups are isomorphic to the cokernel and kernel of $1 -
(A^\delta_E)^t$ regarded as an endomorphism of $\ZZ E^0$.

In this paper we compute both the graded $K$-theory and the graded $K$-homology of relative
Cuntz--Krieger algebras of arbitrary graphs: Let $V$ be any subset of \emph{regular vertices} $E^0_{\rg}$ (those
which receive a nonzero finite set of edges). The relative Cuntz--Krieger algebra $C^*(E; V)$ is
the universal $C^*$-algebra in which the Cuntz--Krieger relation is only imposed at vertices in
$V$. In particular  $C^*(E)=C^*(E; E^0_{\rg})$ for directed graph $E$. Let $A^\delta_V$ be the $V \times
E^0$ matrix with entries given by the same formula as $A^\delta_E$, regarded as a homomorphism
from $\ZZ E^0$ to $\ZZ V$. Write $\widetilde{A}^\delta_V$ for the dual homomorphism from
$\ZZ^{E^0}$ to $\ZZ^V$. Let $\iota : \ZZ V \to \ZZ E^0$ be the inclusion map, and let $\pi :
\ZZ^{E^0} \to \ZZ^V$ be the projection map. Then the graded $K$-theory groups and $K$-homology
groups of the relative Cuntz--Krieger algebra are given by
\begin{align*}
\KTgr0(C^*(E; V),\alpha_\delta) &\cong \coker(\iota - A^\delta_V)^t, \quad  &\KTgr1(C^*(E; V),\alpha_\delta) &\cong \ker(\iota - A^\delta_V)^t,\\
\KHgr0(C^*(E; V),\alpha_\delta) &\cong \ker(\pi - \widetilde{A}^\delta_V), \quad  &\KHgr1(C^*(E; V),\alpha_\delta) &\cong \coker(\pi - \widetilde{A}^\delta_V).
\end{align*}

To prove this, we use that $C^*(E; V)$ may be realised as a relative Cuntz--Pimsner algebra of a
graph module $X(E)$. We verify that the two assumptions (namely injectivity and compactness)
imposed on the left actions of $A$ on a Hilbert module
$X$ in the arguments of \cite{P, KPS6} are not needed. As a result we obtain exact sequences in
$KK$-theory analogous to those of \cite{KPS6} for relative Cuntz--Pimsner algebras. By calculating
the $KK$-groups and the maps between them in the situation where $X$ is the graph module $X(E)$,
we obtain the desired calculations of graded $K$-theory and $K$-homology for relative graph
$C^*$-algebras, substantially generalising the results in \cite{KPS6}.

We then present an alternative calculation using Muhly and Tomforde's adding-tails construction.
In this approach, Pimsner's exact sequences are needed only for modules where the homomorphism
implementing the left action is injective. Given an arbitrary nondegenerate bimodule $X$, we add an infinite
direct sum of copies of the Katsura ideal $J_X$ to both the coefficient algebra and the module $X$ to
obtain a new module $Y$ over a new algebra $B$ which acts injectively on the left. We then recover
the exact sequences for $\Oo_X$ from the ones we already have for $\Oo_Y$ using countable
additivity in $KK$-theory. This is automatic in the first variable, so we obtain a complete
generalisation of the contravariant exact sequence of \cite{KPS6} for $\Oo_X$. However,
$KK$-theory is not in general countably additive in the second variable. However
$KK(\CC, \cdot)$ is countably additive for graded $C^*$-algebras (we could not find a reference , so we give the details) so we obtain an exact sequence
describing the graded $K$-theory $KK_i(\CC, \Oo_X)$.

\smallskip

We begin in Section~\ref{sec:background} with some background on $KK$-theory, mostly to establish
notation. More-detailed background on $KK$-theory can be found in \cite{KPS6}, and of course in
Blackadar's book \cite{B}, which is our primary reference. We assume the reader is familiar with
Hilbert modules and graph $C^*$-algebras. A convenient summary of the requisite background appears
in \cite{KPS6}, and more details can be found in \cite{Raeburn, tfb, Lance, P}. We also provide a
little background on the relative Cuntz--Pimsner algebras of Muhly and Solel \cite{MS}, of which
Katsura's Katsura--Pimsner algebras are a special case. In Section~\ref{sec:noncompact} we show
how to generalise the results of \cite{KPS6} to relative Cuntz--Pimsner algebras of arbitrary
essential graded Hilbert modules. In Section~\ref{sec:graph algebras} we apply these results to
compute the graded $K$-theory and $K$-homology of relative Cuntz--Krieger algebras of arbitrary
graphs. In Section~\ref{sec:adding tails}, we show how Muhly and Tomforde's adding-tails
construction for Hilbert modules can be adapted to graded modules, and reconcile this with our
$K$-theory and $K$-homology results for the graded Katsura--Pimsner algebra of a nondegenerate
Hilbert module.

\section{Background material}\label{sec:background}

In this section we provide some background on $KK$-theory, relative graph $C^*$-algebras and terminology
used in the later sections.

\subsection{Direct sums and products of groups} Let $S$ be any countable set. We let $\ZZ S$ denote
the direct sum $\bigoplus_{s\in S} \ZZ$ of copies of $\ZZ$ (the group of all finitely supported
functions from $S$ to $\ZZ$), and $\ZZ^S$ denotes the direct product $\prod_{s\in S}\ZZ$ of copies
of $\ZZ$ (the group of all functions from $S$ to $\ZZ$). More generally we write
$\prod^\infty_{n=1}G_n$ for the infinite product of groups $G_n$, and $\bigoplus^\infty_{n=1} G_n$
for the subgroup generated by the $G_n$.

\subsection{Hilbert modules} Given a $C^*$-algebra $B$ and a right Hilbert $B$-module $X$, we
write $\Ll(X)$ for the adjointable operators on $X$, we write $\Kk(X)$ for the generalised compact
operators on $X$, and given $\xi,\eta \in X$ we write $\Theta_{\xi,\eta}$ for the compact operator
$\zeta \mapsto \xi \cdot \langle \eta, \zeta\rangle_B$. If $\phi : A \to \Ll(X)$ is a
$C^*$-homomorphism so that $X$ is an $A$--$B$-correspondence, we say that the left action is
\emph{injective} if $\phi$ is injective and that the left action is \emph{by compact operators} if
$\phi(A) \subseteq \Kk(X)$.

Let $I$ be an ideal of a $C^*$-algebra $A$ and $X$ be a right Hilbert $A$-module $X$. Following \cite{Kat},
we define $XI:=\{x\in X:\langle x,x\rangle\in I\}$. This $XI$ is a right Hilbert $I$-module under the same operations as
$Y$, and $XI = X\cdot I:=\{x \cdot i : x \in X, i \in I\}$ justifying the notation.

\subsection{Gradings} A \emph{grading} of a $C^*$-algebra is a self-inverse automorphism $\alpha$ of $A$, and
decomposes $A$ into direct summands $A_0 = \{a : \alpha(a) = a\}$ and $A_1 = \{a : \alpha(a) =
-a\}$. We write $\partial(a) = i$ if $a \in A_i$. If $a,b$ are homogeneous, then their graded
commutator is $[a,b]^{\operatorname{gr}} := ab - (-1)^{\partial(a)\partial(b)}ba$, and we extend
this formula bilinearly to arbitrary $a,b \in A$. Elements of $A_0 \cup A_1$ are called
\emph{homogeneous}, elements of $A_0$ are \emph{even} and elements of $A_1$ are \emph{odd}. A
homomorphism of graded $C^*$-algebras is a graded homomorphism if it intertwines the grading
automorphisms.

A \emph{grading} of a $C^*$-correspondence $X$ over graded $C^*$-algebras $(A, \alpha_A)$ and $(B,\alpha_B)$
is a map $\alpha_X : X \to X$ such that $\alpha_X^2 = \id$, $\alpha_X(a \cdot x \cdot b) =
\alpha_A(a)\cdot\alpha_X(a)\cdot\alpha_B(b)$, and $\alpha_B(\langle x,
y\rangle_B) = \langle\alpha_X(x), \alpha_X(y)\rangle_B$. This induces a grading
$\tilde{\alpha}_X$ on $\Ll(X)$ given by $\tilde{\alpha}_X(T) = \alpha_X \circ T \circ \alpha_X$.
Again, $X$ decomposes as the direct sum of $X_0 = \{\xi : \alpha_X(\xi) = \xi\}$ and $X_1 = \{\xi
: \alpha_X(\xi) = -\xi\}$, we call the elements of the $X_i$ homogeneous, and odd and even as
above.

For a graded $C^*$-algebra $B$ set $\mathbb{I}B:=C([0,1]) \hatimes B$ with the trivial grading on
$C([0,1])$. For each $t\in [0,1]$ the homomorphism $\epsilon_t : \mathbb{I}B \to B$ given by
$\epsilon_t(f\hatimes b) = f(t)b$ is graded. It follows that $B$ may be regarded as a graded
$\mathbb{I}B$--$B$-correspondence $(\epsilon_t, B_B)$.

\subsection{Graded tensor products} The \emph{graded tensor product} of graded $C^*$-algebras $A$ and $B$ is the minimal
$C^*$-completion of the algebraic tensor product $A \odot B$ with involution satisfying $(a
\hatimes b)^* = (-1)^{\partial a \cdot\partial b} a^* \hatimes b^*$ and multiplication satisfying
$(a \hatimes b)(a' \hatimes b') = (-1)^{\partial b \cdot \partial a'} aa' \hatimes bb'$ for
homogeneous elements $a,a' \in A$ and $b, b' \in B$. There is a grading $\alpha_A \hatimes \alpha_B$ on
$A \hatimes B$ such that $(\alpha_A \hatimes\alpha_B)(a \hatimes b) = \alpha_A(a)
\hatimes\alpha_B(b)$. The balanced tensor product $X \otimes_B Y$ of graded $C^*$-correspondences
admits a grading $\alpha_X\hatimes \alpha_Y$ on $X \otimes_\psi Y$ such that $(\alpha_X
\hatimes\alpha_Y)(x \hatimes y) = \alpha_X(x) \hatimes\alpha_Y(y)$.

\subsection{Relative Cuntz--Pimsner algebras} Let $X$ be a graded $A$--$A$-correspondence,
and write $\varphi : A \to \Ll(X)$ for the homomorphism inducing the left action.

A \emph{representation} of $X$ in a $C^*$-algebra $B$ is a pair $(\pi, \psi)$ consisting of a
homomorphism $\pi : A \to B$ and a linear map $\psi : X \to B$ such that $\pi(a)\psi(\xi)\pi(b) =
\psi(a \cdot x \cdot b)$ and $\pi(\langle \xi, \eta\rangle_A) = \psi(\xi)^*\psi(\eta)$ for all
$a,b \in A$ and $\xi,\eta \in X$. The \emph{Toeplitz algebra} $\Tt_X$ of $X$ is the universal
$C^*$-algebra generated by a representation of $X$. Such a representation induces a homomorphism
$\psi^{(1)} : \Kk(X) \to B$ satisfying $\psi^{(1)}(\Theta_{\xi,\eta}) = \psi(\xi)\psi(\eta)^*$ for
all $\xi,\eta$. The universal property of $\Tt_X$ gives a grading $\alpha_\Tt$ of $\Tt_X$ such
that if $(\pi, \psi)$ is the universal representation of $X$ in $\Tt_X$, then $\alpha_\Tt \circ
\pi = \pi \circ \alpha_A$, and $\alpha_\Tt \circ \psi = \psi \circ \alpha_X$.

Let $C := \varphi^{-1}(\Kk(X))$; observe that if $(\pi,\psi)$ is a representation of $X$, then
both $\pi$ and $\psi^{(1)} \circ \varphi$ are homomorphisms from $C$ to $\Tt_X$. Given an ideal $I
\subseteq C$, the relative Cuntz--Pimsner algebra $\Oo_{X, I}$ is defined to be the universal
$C^*$-algebra generated by a representation $(\pi,\psi)$ that is \emph{$I$-covariant} in the sense
that $\pi|_I = (\psi^{(1)} \circ \varphi)|_I$. If $I$ is invariant under the grading $\alpha_A$ of
$A$, then the universal property of $\Oo_{X, I}$ shows that the grading $\alpha_\Tt$ of $\Tt_X$
descends to a grading $\alpha_\Oo$ of $\Oo_{X, I}$.

The \emph{Fock space} $\Ff_X$ is the internal direct sum $\Ff_X := \bigoplus^\infty_{n=0}
X^{\hatimes n}$, with the convention that $X^{\hatimes 0} = {_A A_A}$. There is a representation
$(\ell_0, \ell_1)$ of $X$ in $\Ll(\Ff_X)$ such that $\ell_0(a)\xi = a \cdot \xi$ and such that
$\ell_1(\xi)\eta = \xi \otimes_A \eta$. The induced homomorphism $\pi_0 : \Tt_X \to
\Ll(\Ff_X)$ is injective.

The ideal $C = \varphi^{-1}(\Kk(X))$ of $A$ induces the submodule $\Ff_{X, C} := \Ff_X C$.
The subalgebra $\Kk(\Ff_{X, C}) := \clsp\{\Theta_{\xi,\eta} :
\xi,\eta \in \Ff_{X, C}\} \subseteq \Ll(\Ff_X)$ is contained in $\pi_0(\Tt_X)$ (see
\cite[Lemma~2.17]{MS}). Since $\pi_0 : \Tt_X \to \Ll(\Ff_X)$ is injective, it determines an
inclusion $j : \Kk(\Ff_{X, C}) \to \Tt_X$. In particular, for any ideal $I \subseteq C$, $j$
restricts to a graded inclusion of $\Kk(\Ff_{X, I})$ in $\Tt_X$. Theorem~2.19 in \cite{MS}
and a routine application of universal properties show that the quotient map $\Tt_{X}\to \Oo_{X, I}$ induces an isomorphism $\Tt_{X}/j(\Kk(\Ff_{X, I})) \cong \Oo_{X, I}$.

\subsection{Kasparov modules}\label{sec:KKBasics} If $(A, \alpha_A)$ and $(B, \alpha_B)$ are separable graded $C^*$-algebras, then a \emph{Kasparov
$A$--$B$-module} is a quadruple $(X, \phi, F, \alpha_X)$ where $(\phi, X)$ is a countably
generated $A$--$B$-correspondence, $\alpha_X$ is a grading of $X$, and $F \in \Ll(X)$
is an odd element with respect to the grading $\tilde\alpha_X$ on $\Ll(X)$ such that for all
$a\in A$ the elements $(F  - F^*)\phi(a)$, $(F^2 - 1)\phi(a)$, and $[F,
\phi(a)]^{\operatorname{gr}}$ are compact. When these elements are all zero we
call $(X, \phi, F, \alpha_X)$ for a \emph{degenerate} Kasparov module.

Kasparov $A$--$B$-modules $(X_0, \phi_0, F_0, \alpha_{X_0})$ and $(X_1, \phi_1, F_1,
\alpha_{X_1})$ are \emph{unitarily equivalent} if there is a unitary $U \in \Ll(X_0, X_1)$ that
intertwines $\phi_0$ and $\phi_1$, $F_0$ and $F_1$, and $\alpha_0$ and $\alpha_1$. They are
\emph{homotopy equivalent} if there is Kasparov $A$--$\mathbb{I}B$-module $(X, \phi, F, \alpha_X)$
such that, $(X \hatimes_{\epsilon_i} B_B, \tilde{\epsilon_i}\circ\phi, \tilde{\epsilon_i}(F),
\alpha_X \hatimes\alpha_B)$ is unitarily equivalent to $(X_i, \phi_i, F_i, \alpha_{X_i})$ for each
of $i = 0,1$. Homotopy equivalence is denoted $\sim_h$, and is an equivalence relation. The
Kasparov group $KK(A,B)$ is the collection of all homotopy classes of Kasparov $A$--$B$-modules,
which is a group under the operation induced by taking direct sums of Kasparov modules. Given a
graded homomorphism $\psi : A \to B$ of $C^*$-algebras, and a Kasparov $B$--$C$-module $(X, \phi,
F, \alpha_X)$, we obtain a new Kasparov $A$--$C$-module $(X, \phi \circ \psi, F, \alpha_X)$, whose
class we denote $\psi^*[X]$. For a graded homomorphism $\psi\colon B\to C\cong \Kk(C_C)$ we let
$[\psi]:=[C_C, \psi, 0, \alpha_C]\in KK(B,C)$. If $\phi\colon A\to B$ is a graded homomorphism and
$(Y, \psi, G, \alpha_Y)$ is a Kasparov $C$--$A$-module, then $(Y \hatimes_\phi B_B, \psi
\hatimes1, G \hatimes1, \alpha_Y \hatimes\alpha_B)$ is a Kasparov $C$--$B$-module whose class we
denote by $\phi_*[Y, \psi, G, \alpha_Y]$.

We will need to use that graded Morita equivalence implies $KK$-equivalence. Suppose that $(A,
\alpha_A)$ and $(B, \alpha_B)$ are graded $C^*$-algebras, and $(X, \alpha_X)$ is a graded
imprimitivity $A$--$B$-module. Then in particular the left action of $A$ on $X$ is implemented by
a homomorphism $\varphi : A \to \Kk(X)$, and the right action of $B$ on the dual module $X^*$ is
implemented by a homomorphism $\psi : B \to \Kk(X^*)$. So we obtain $KK$-classes $[X] := [X,
\varphi, 0, \alpha_X] \in KK(A, B)$ and $[X^*] := [X^*, \psi, 0, \alpha_{X^*}] \in KK(B, A)$.
Since the Fredholm operators in both modules of these Kasparov modules are zero, we have $[X]
\hatimes_B [X^*] = [X \otimes_B X^*, \phi \otimes 1, 0, \alpha_X \otimes \alpha_{X^*}]$. Since $X$
is an imprimitivity module, we have $X \otimes X^* \cong A$ via $\xi \otimes \eta \mapsto
{_A\langle \xi, \eta\rangle}$, and it is routine to check that this isomorphism intertwines the
canonical left action $\tilde\varphi$ of $A$ on $X \otimes X^*$ with the left action $L_A$ of $A$
on itself by left multiplication, and intertwines $\alpha_X \otimes \alpha_{X^*}$ with $\alpha_A$.
So $[X] \hatimes_B [X^*] = [A, L_A, 0, \alpha_A] = [\id_A]$. Similarly $[X^*] \hatimes_A [X] =
[\id_B]$, and so $(A, \alpha_A)$ and $(B, \alpha_B)$ are $KK$-equivalent.

\section{Graded K-theory of relative Cuntz--Pimsner algebras}\label{sec:noncompact}

In this section we generalise the main results in Section 4 of \cite{KPS6} establishing exact
sequences in $KK$-theory for graded relative Cuntz--Pimsner algebras associated to an essential graded
$A$--$A$-correspondence. We do not assume the
action is injective, nor compact nor that $X$ is full, but we do assume that $X$ is \emph{essential} (or  \emph{nondegenerate}) in the sense
that $\overline{\varphi(A)X}=X$.

\smallskip
\textbf{Set-up.} \emph{Throughout this section we fix a graded, separable, nuclear $C^*$-algebra
$A$ and a graded countably generated essential $A$--$A$-correspondence $X$ with a left
action $\varphi$ and we fix an ideal $I \subseteq \varphi^{-1}(\Kk(X))$.}

\smallskip
For readers interested in the Katsura--Pimsner algebra $\Oo_{X}$ (\cite[Definition 3.5]{Kat}) we
recall that it coincides with the relative Cuntz--Pimsner algebra $\Oo_{X, I}$ for  $I = \varphi^{-1}(\Kk(X)) \cap \ker\varphi^\perp$, where $\ker\varphi^\perp=\{b \in A : b \ker\varphi = \{0\}\}$.

\smallskip

We present terminology of \cite{KPS6} relevant to Lemma~\ref{Lemma: A and TtX KKequiv}.
Let $\Ff_X$ be the Fock space of $X$, let $\alpha_X^\infty$ be the
diagonal grading on $\Ff_X$, and let $\varphi^\infty:A\to\Ll(\Ff_X)$ be the diagonal left action of $A$
on $\Ff_X$. Recall that $\Tt_X$ is the Toeplitz algebra associated to $X$, generated by $i_A(a)=
\varphi^\infty(a)$ and $i_X(\xi)= T_\xi$, and $\alpha_\Tt$ is the restriction of
$\tilde\alpha_X^\infty$ to $\Tt_X$. Let $\pi_i:\Tt_X\to\Ll(\Ff_X)$ be the representations  determined by
\[
\pi_0(T_\xi)\rho=\begin{cases}
\xi\hatimes\rho,&\rho\in X^{\hatimes n}, n\geq 1,\\
\xi\cdot \rho,&\rho\in A,
\end{cases}, \quad \pi_1(T_\xi)\rho=\begin{cases}
\xi\hatimes\rho,&\rho\in X^{\hatimes n}, n\geq 1,\\
0,&\rho\in A
\end{cases}
\]
As presented in \cite[Section~4]{KPS6} there is a Kasparov $\Tt_X$--$A$-module given by
\[
M=\left(\Ff_X\oplus\Ff_X,\pi_0\oplus(\pi_1\circ\alpha_\Tt),\left(\begin{smallmatrix}
0&1\\1&0
\end{smallmatrix}\right),\left(\begin{smallmatrix}
\alpha_X^\infty&0\\0&-\alpha_X^\infty
\end{smallmatrix}\right)\right).
\]
Recall, for the canonical inclusion $i_A:A\hookrightarrow \Tt_X$ and
a graded $C^*$-algebra $B$ we have
\begin{align*}
[i_A]&=[(\Tt_X, i_A, 0, \alpha_\Tt)]\in KK(A,\Tt_X), \ \text{and}&
[\id_{B}]&=[B, \id_{B}, 0, \alpha_B]\in KK(B,B).
\end{align*}
By \cite[Theorem~4.2]{KPS6}, if $\varphi$ is injective and by compact operators, then the Kasparov classes $[i_A]$ and $[M]$
are mutually inverse in the sense that $[i_A]\hatimes_{\Tt_X}[M]=[\id_A]$ and
$[M]\hatimes_A[i_A]=[\id_{\Tt_X}]$. We prove the
result is true without assuming $\varphi$ is injective or by compact operators.

\begin{lem}[cf. {\cite[Theorem~4.2]{KPS6}}]\label{Lemma: A and TtX KKequiv}
With notation as above, the pair $[i_A]$ and $[M]$ are mutually inverse. In particular, $(A,\alpha_A)$
and $(\Tt_X, \alpha_\Tt)$ are $KK$-equivalent.
\end{lem}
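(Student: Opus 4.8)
The plan is to follow the structure of the proof of \cite[Theorem~4.2]{KPS6} and check that the hypotheses of injectivity and compactness of $\varphi$ are never genuinely used in establishing that $[i_A]\hatimes_{\Tt_X}[M]=[\id_A]$ and $[M]\hatimes_A[i_A]=[\id_{\Tt_X}]$. Concretely, one computes each Kasparov product directly. For $[i_A]\hatimes_{\Tt_X}[M]$, one forms the internal tensor product $\Tt_X\hatimes_{i_A}(\Ff_X\oplus\Ff_X)$; since $i_A$ sends $A$ into $\Tt_X$ and the module $\Ff_X\oplus\Ff_X$ carries the left $\Tt_X$-action $\pi_0\oplus(\pi_1\circ\alpha_\Tt)$, this tensor product is unitarily equivalent to $\Ff_X\oplus\Ff_X$ with the restricted left $A$-action $\varphi^\infty\oplus(\varphi^\infty$ twisted by $\alpha_A)$ — note that the identification $\Tt_X\hatimes_{i_A}\Ff_X\cong\Ff_X$ uses only that $X$ is essential, so that $\varphi^\infty(A)$ acts nondegenerately and $i_A(A)\cdot\Ff_X$ is dense. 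One then shows this resulting Kasparov $A$--$A$-module is homotopic to $[\id_A]=[A,L_A,0,\alpha_A]$, typically by an explicit operator homotopy or by exhibiting a compact perturbation; this is exactly the argument of \cite{KPS6} and it only involves the Fock module structure, not the injectivity or compactness of $\varphi$.

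For the reverse product $[M]\hatimes_A[i_A]$, one forms $(\Ff_X\oplus\Ff_X)\hatimes_A\Tt_X$ with the left $\Tt_X$-action inherited from $M$ and the Fredholm operator $\left(\begin{smallmatrix}0&1\\1&0\end{smallmatrix}\right)\hatimes 1$, and must show this is homotopic to $[\id_{\Tt_X}]=[\Tt_X,\id,0,\alpha_\Tt]$. Here the key point is that the representation $\pi_0$ of $\Tt_X$ on $\Ff_X$ is faithful (stated in the excerpt), so $\Tt_X$ embeds in $\Ll(\Ff_X)$, and one exploits the relation between $\pi_0$, $\pi_1$ and the generators $T_\xi$, $i_A(a)$ to build the required homotopy. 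The role played in \cite{KPS6} by $\varphi$ being by compacts was presumably to ensure certain operators landed in $\Kk$; I would replace any such appeal by the observation that the relevant differences (e.g. $\pi_0(t)-\pi_1(\alpha_\Tt(t))$ for $t\in\Tt_X$, compressed appropriately) are already compact on the Fock module for purely structural reasons — the essential hypothesis guarantees the rank-one operators $\Theta_{\xi,\eta}$ with $\xi,\eta$ in low Fock levels generate the relevant corner. I would isolate this as the one nontrivial lemma to verify.

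The main obstacle I anticipate is precisely this last point: making sure that every place where \cite{KPS6} invokes $\varphi(A)\subseteq\Kk(X)$ or $\ker\varphi=0$ can be routed around. Injectivity of $\varphi$ is the easier of the two to dispense with — it enters \cite{P, KPS6} mostly to identify $\Oo_X$ with a concrete quotient, and Lemma~\ref{Lemma: A and TtX KKequiv} is about $\Tt_X$, not $\Oo_X$, so it should not be needed at all here. Compactness is subtler; the safe strategy is to recheck that the compact-operator conditions in the definition of a Kasparov module for $M$ itself (which were already established in \cite{KPS6} without full generality) still hold, and that the two product computations produce degenerate-up-to-homotopy modules using only $X^{\hatimes n}$-level computations. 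Once $[i_A]$ and $[M]$ are shown mutually inverse, the final assertion — that $(A,\alpha_A)$ and $(\Tt_X,\alpha_\Tt)$ are $KK$-equivalent — is immediate from the definition of $KK$-equivalence, since $[i_A]\in KK(A,\Tt_X)$ and $[M]\in KK(\Tt_X,A)$ are then invertible with respect to the Kasparov product.
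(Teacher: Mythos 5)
Your overall strategy (rerun the argument of \cite[Theorem~4.2]{KPS6} and check where injectivity and compactness of $\varphi$ enter) is the same as the paper's, and your treatment of $[i_A]\hatimes_{\Tt_X}[M]=[\id_A]$ is fine -- the paper likewise observes that this half of the KPS6 argument goes through unchanged. But for $[M]\hatimes_A[i_A]=[\id_{\Tt_X}]$ you have located the difficulty correctly and then not resolved it: you write that you ``would replace any such appeal by the observation that the relevant differences \dots are already compact on the Fock module for purely structural reasons'' and that you ``would isolate this as the one nontrivial lemma to verify.'' That unverified lemma is precisely the content of the result; without it you have a plan, not a proof. Your proposed justification -- that essentiality makes the rank-one operators $\Theta_{\xi,\eta}$ with $\xi,\eta$ in low Fock levels generate the relevant corner -- is an assertion in the right spirit, but it is not obvious and is not how one sees it.

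Here is what actually closes the gap in the paper. One first rewrites $(i_A)_*[M]-[\id_{\Tt_X}]$ as a single class by introducing the auxiliary representation $\tau$ of $\Tt_X$ on $A\hatimes_A\Tt_X\cong\Tt_X$ (left multiplication, extended by zero to the rest of $\Ff_X\hatimes_A\Tt_X$); this uses essentiality of $X$. One then builds an explicit path of Toeplitz representations $\psi_t$ interpolating between $\pi_0'\circ\alpha_\Tt$ and $\pi_1'+\tau$, and the whole issue is whether $K_{t,\xi}:=(\pi_t'-\pi_1')(T_\xi)$ is compact. The point is that $K_{t,\xi}$ vanishes off the zeroth summand $A\hatimes_A\Tt_X$, so compactness only needs to be checked there, where the left action of $A$ is by left multiplication on $A_A$ -- which is \emph{always} by compacts, with no hypothesis on $\varphi$. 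To exploit this one factors $\xi=y\cdot\langle y,y\rangle$ (Cohen--Hewitt factorization, \cite[Proposition~2.31]{tfb}) and writes $K_{t,\xi}=(\pi_t'(T_y)-\pi_1'(T_y))\,\tilde\varphi^\infty(\langle y,y\rangle)$; by \cite[Proposition~4.7]{Lance} the operator $\tilde\varphi^\infty(\langle y,y\rangle)|_{A\hatimes_A\Tt_X}$ is compact, hence so is $K_{t,\xi}$. This factorization step is the genuinely new ingredient that replaces the hypothesis $\varphi(A)\subseteq\Kk(X)$, and it is the step your proposal waves at but does not supply.
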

\begin{proof}
The argument in the proof of \cite[Theorem~4.2]{KPS6} showing that
$[i_A]\hatimes_{\Tt_X}[M]=[\id_A]$ does not require injectivity nor compactness of the left action
$\varphi$.

To show $[M]\hatimes_A[i_A]=[\id_{\Tt_X}]$, we adjust the proof of \cite[Theorem~4.2]{KPS6}. Let
$\pi_0':=\pi_0\hatimes 1_\Tt$ and $\pi_1':=(\pi_1\circ\alpha_\Tt)\hatimes1_\Tt$. By
\cite[Proposition~18.7.2(a)]{B},  identifying $(\Ff_X\oplus\Ff_X)\hatimes_A\Tt_X$ with $(\Ff_X\hatimes_A\Tt_X)\oplus(\Ff_X\hatimes_A\Tt_X)$,
\begin{align*}
[M]\hatimes_A[i_A]&=(i_A)_*[M]=\left[(\Ff_X\oplus\Ff_X)\hatimes_A\Tt_X,\pi_0' \oplus \pi_1', \left(\begin{smallmatrix}
0&1\\1&0
\end{smallmatrix}\right),\left(\begin{smallmatrix}\alpha_X^\infty\hatimes\alpha_\Tt&0\\0&-\alpha_X^\infty\hatimes\alpha_\Tt\end{smallmatrix}\right)\right].
\end{align*}
Since $X$ is essential we have $A\hatimes_A\Tt_X\cong\Tt_X$ as graded
$A$--$\Tt_X$-correspondences, and so $\alpha_\Tt$ defines a left action of $\Tt_X$ on
$A\hatimes\Tt_X$. Extending by zero, we get an action $\tau$ of $\Tt_X$ on $\Ff_X\hatimes\Tt_X$. The proof of \cite[Theorem~4.2]{KPS6} shows that
\[
\left[(\Ff_X\oplus\Ff_X)\hatimes_A\Tt_X,0\oplus\tau,\left(\begin{smallmatrix}
0&1\\1&0
\end{smallmatrix}\right),\left(\begin{smallmatrix}\alpha_X^\infty\hatimes\alpha_\Tt&0\\0&-\alpha_X^\infty\hatimes\alpha_\Tt\end{smallmatrix}\right)\right]=-[\id_{\Tt_X}],
\]
and hence
\begin{equation}\label{M tensor iA is identity}
(i_A)_*[M]-[\id_{\Tt_X}]=\left[(\Ff_X\oplus\Ff_X)\hatimes_A\Tt_X,\pi_0'\oplus(\pi_1'+\tau),\left(\begin{smallmatrix}
0&1\\1&0
\end{smallmatrix}\right),\left(\begin{smallmatrix}\alpha_X^\infty\hatimes\alpha_\Tt&0\\0&-\alpha_X^\infty\hatimes\alpha_\Tt\end{smallmatrix}\right)\right].
\end{equation}
We claim \eqref{M tensor iA is identity} is the class of a degenerate Kasparov module. To show this, for each $t\in[0,1]$
define $\psi_t:X\to\Ll(\Ff_X\hatimes\Tt_X)$ by
\[
\psi_t(\xi)=\cos(t\pi/2)(\pi_0'(\alpha_{\Tt}(T_\xi))-\pi_1'(T_\xi))+\sin(t\pi/2)\tau(\xi)+\pi_1'(T_\xi).
\]
With $\tilde\varphi^\infty:=\varphi^\infty\hatimes 1_{\Tt_X}\colon A\to \Ll(\Ff_X\hatimes_A\Tt_X)$ we have a
Toeplitz representation $(\tilde\varphi^\infty\circ\alpha_A,\psi_t)$ of $X$. Hence for each
$t\in[0,1]$ there is a homomorphism $\pi_t':\Tt_X\to\Ll(\Ff_X\hatimes_A\Tt_X)$ such that
$\pi_t'(T_\xi)=\psi_t(\xi)$ and $\pi_t'(a)=\tilde\varphi^\infty\circ\alpha_A(a)$ for all $\xi\in X$
and $a\in A$. We claim that $K_{t,\xi}:=(\pi_t'-\pi'_1)(T_\xi)$ is compact for each $\xi\in X$.
To see this, note that $K_{t,\xi}$ vanishes on $(\Ff_X\ominus
A)\hatimes_A\Tt_X=(\Ff_X\hatimes_A\Tt_X)\ominus(A\hatimes_A\Tt_X)$, and has range contained in the
subspace $A\hatimes_A\Tt_X$, thus we need only need to show that $K_{t,\xi}$ is compact on $A\hatimes_A\Tt_X$.
To show this recall that for an $A$--$B$-correspondence $Y$ with an
left action $\psi:A\to\Ll(Y)$, putting $J:=\psi^{-1}(\Kk(Y))$, for each $k\in\Kk(A J)$ the operator $k\hatimes1_Y$ is compact on $A\hatimes_\psi Y$, see \cite[Proposition~4.7]{Lance}. With $Y:=\Tt_X$, $\psi:=i_A=\varphi^{\infty}$ and
$J:=i_A^{-1}(\Kk(\Tt_X))=A$ it follows that each $\tilde\varphi^\infty(a)|_{A\hatimes_A\Tt_X}$ is compact (because $\varphi^\infty(a)|_{A}$ is compact). Use
\cite[Proposition~2.31]{tfb} to express $\xi=y\cdot\langle y,y\rangle$. We compute
\[
K_{t,\xi}=\pi_t'(T_\xi)-\pi_1'(T_\xi)=(\pi_t'(T_y)-\pi_1'(T_y))\tilde\varphi^\infty(\langle y,y\rangle),
\]
which is compact since $\tilde\varphi^\infty(\langle y,y\rangle)|_{A\hatimes_A\Tt_X}$ is compact.
As in \cite{KPS6}, $\psi_0(\xi)=\pi_0'\circ \alpha_{\Tt}(T_\xi)$ and $\psi_1(\xi)=(\pi_1'+\tau)(T_\xi)$, so we can replace $\pi_1'+\tau$ with $\pi_0'\circ \alpha_{\Tt}$ in the expression \eqref{M tensor iA is identity} for $(i_A)_*[M]-[\id_{\Tt_X}]$ without changing the class. The latter is the class of a degenerate Kasparov module, proving the claim. Thus $[M]\hatimes_A[i_A]=[\id_{\Tt_X}]$.
\end{proof}

We introduce the notation used in Lemma~\ref{Lemma: A and TtX KKequiv}. Let $I\subseteq \varphi^{-1}(\Kk(X))$ be the ideal of $A$ consisting of elements that act as compact operators on
the left of $X$. Let $\Ff_{X,I}$ denote the right
Hilbert $I$-module $\Ff_X I := \{\xi \in \Ff_X : \langle \xi, \xi\rangle_A \in I\}$. Let $\iota_I\colon I \hookrightarrow A$ and $\iota_{\Ff I}\colon\Kk(\Ff_{X,I})\hookrightarrow\Ll(\Ff_{X})$ be the
inclusion maps. As discussed in Section~\ref{sec:background}, $\iota_{\Ff I}(\Kk(\Ff_{X, I}))$ is
contained in the image of $\pi_0(\Tt_X) \subseteq \Ll(\Ff_X)$ of the Toeplitz algebra under its
canonical representation on the Fock space. Thus there is a graded embedding $j : \Kk(\Ff_{X,I})
\hookrightarrow \Tt_X$ such that $\pi_0\circ j=\iota_{\Ff I}$. We have the induced Kasparov classes
\begin{align*}
[\iota_I]&=[A_A,\iota_I,0,\alpha_A]\in KK(I,A), &
[j]&=[\Tt_X,j,0,\alpha_\Tt]\in KK(\Kk(\Ff_{X,I}),\Tt_X).
\end{align*}
Writing $P : \Ff_X \to \Ff_X \ominus A = \bigoplus^\infty_{n=1} X^{\hatimes n}$ for the
projection onto the orthogonal complement of the 0\textsuperscript{th} summand, we have
\[
[M]=\left[\Ff_X\oplus(\Ff_X\ominus A),\pi_0\oplus\pi_1\circ \alpha_{\Tt},\left(\begin{smallmatrix}
0&1\\P&0
\end{smallmatrix}\right),\left(\begin{smallmatrix}
\alpha_X^\infty&0\\0&-\alpha_X^\infty
\end{smallmatrix}\right)\right]\in KK(\Tt_X,A).
\]
We denote by $[{}_IX]$ the class $[X,\varphi|_I,0,\alpha_X]\in KK(I,A)$ of the module $X$
with the left action restricted to $I$. We note $\iota_{\Ff I}$ induces the Kasparov class
$[\Ff_{X,I},\iota_{\Ff I},0,\alpha_X^\infty]\in KK(\Kk(\Ff_{X,I}),I)$, where $\alpha_X^\infty$
and each image of $\iota_{\Ff I}$ are now considered as operators on $\Ff_{X,I}\subseteq  \Ff_{X}$.
\begin{lem}[cf. {\cite[Lemma~4.3]{KPS6}}]\label{Lemma: j tensor M}
With notation as above we have
\[
    [j]\hatimes_{\Tt_X}[M]=[\Ff_{X,I},\iota_{\Ff I},0,\alpha_X^\infty]\hatimes_I([\iota_I]-[{}_IX]).
\]
\begin{proof}
Using \cite[Proposition~18.7.2(b)]{B} we can express $[j]\hatimes_{\Tt_X}[M]$ as
\[
[j]\hatimes_{\Tt_X}[M]
    =\left[\Ff_X\oplus(\Ff_X\ominus A),
        (\pi_0\oplus(\pi_1\circ\alpha_\Tt))\circ j,
        \left(\begin{smallmatrix} 0&1\\P&0\end{smallmatrix}\right),
        \left(\begin{smallmatrix}\alpha_X^\infty&0\\0&-\alpha_X^\infty\end{smallmatrix}\right)\right].
\]
Let $\alpha_\Kk$ be the restriction of $\tilde\alpha_X^\infty$ to $\Kk(\Ff_{X,I})$. Since
$\pi_0\circ j=\iota_{\Ff I}$ and $\pi_1\circ\alpha_\Tt\circ j=\pi_1\circ j\circ\alpha_\Kk$ take values in
$\Kk(\Ff_{X,I})$, the straight line path from $\left(\begin{smallmatrix} 0&1\\P&0	
\end{smallmatrix}\right)$ to $0$ determines an operator homotopy of Kasparov modules. Hence we may
write
\begin{align}
[j]\hatimes_{\Tt_X}[M]
    &= \left[\Ff_X\oplus(\Ff_X\ominus A),(\pi_0\oplus(\pi_1\circ\alpha_\Tt))\circ j,0,\left(\begin{smallmatrix} \alpha_X^\infty&0\\0&-\alpha_X^\infty \end{smallmatrix}\right)\right]\nonumber\\
	&= [\Ff_X,\pi_0\circ j,0,\alpha_X^\infty]+[\Ff_X\ominus A, \pi_1\circ j\circ\alpha_\Kk,0,-\alpha_X^\infty]\nonumber\\
	&= [\Ff_X,\iota_{\Ff I},0,\alpha_X^\infty]-[\Ff_X\ominus A,\pi_1\circ j,0,\alpha_X^\infty]\label{eq: j tensor M}.
\end{align}
Since $\overline{\Kk(\Ff_{X,I})\Ff_X}=\Ff_{X,I}$ and $\overline{\Kk(\Ff_{X,I})(\Ff_X\ominus
A)}=\Ff_{X,I}\ominus A$ we may (see \cite[Section~17.5]{B}) replace each module in \eqref{eq: j
tensor M} with its essential submodule without changing the Kasparov classes, to obtain
\begin{equation}\label{eq: j tensor M 2}
[j]\hatimes_{\Tt_X}[M] = [\Ff_{X,I},\iota_{\Ff I},0,\alpha_X^\infty]-[\Ff_{X,I}\ominus A,\pi_1\circ j,0,\alpha_X^\infty].
\end{equation}
The map $(\xi \otimes a) \mapsto \xi \cdot a$
implements a unitary equivalence
\[
    (\Ff_{X,I}\hatimes_I A_A,\iota_{\Ff I}\hatimes 1,0,\alpha_X^\infty\hatimes \alpha_A)\cong (\Ff_{X,I},\iota_{\Ff I},0,\alpha_X^\infty).
\]
So, writing $[\Ff_{X,I}, \iota_{\Ff I}, 0, \alpha_X^\infty]_A$ for the class in $KK(\Tt_X, A)$ obtained by
regarding $\Ff_{X, I}$ as a right $A$-module, and $[\Ff_{X,I}, \iota_{\Ff I}, 0, \alpha_X^\infty]_I$ for
the class obtained by regarding it as a right $I$-module, and recalling that $[\iota_I] \in KK(I,
A)$ is the class of the inclusion of $I$ in $A$, we have
\[
[\Ff_{X,I}, \iota_{\Ff I}, 0, \alpha_X^\infty]_I \hatimes_I [\iota_I] = [\Ff_{X, I}, \iota_{\Ff I}, 0, \alpha_X^\infty]_A.
\]
Since $X$ is essential, the map that sends $i \cdot \xi$ to $i \otimes \xi$ for $i \in I$ and $\xi \in X$, and sends
$\xi_1 \otimes \cdots \otimes \xi_n$ to $(\xi_1 \otimes \cdots \otimes \xi_{n-1}) \otimes \xi_n$
for $\xi_1, \dots, \xi_n \in X$ is a unitary equivalence
\[
(\Ff_{X,I}\ominus A,\pi_1\circ j,0,\alpha_X^\infty)
    \cong (\Ff_{X,I} \hatimes_I X,(\pi_0\circ j)\hatimes 1,0,\alpha_X^\infty\hatimes\alpha_X).
\]
By \cite[Proposition~18.10.1]{B}, $[\Ff_{X,I}, \iota_{\Ff I}, 0, \alpha_X^\infty] \hatimes
[{}_IX]=[\Ff_{X,I} \hatimes_I X, \iota_{\Ff I} \hatimes 1, 0, \alpha_X^\infty\hatimes\alpha_X]$
provided that $(\iota_{\Ff I} \hatimes 1)( \Kk(\Ff_{X,I}))\subseteq \Kk(\Ff_{X,I} \hatimes_I X)$. This
containment is a direct consequence of \cite[Proposition~4.7]{Lance} since $I$ consists of
elements that act as compact operators on the left of $X$. It follows that
\[
[\Ff_{X,I}\ominus A,\pi_1\circ j,0,\alpha_X^\infty] = [\Ff_{X,I}, \iota_{\Ff I}, 0, \alpha_X^\infty] \hatimes [{}_IX].
\]
Substituting both of these equalities into \eqref{eq: j tensor M 2} and using distributivity of the Kasparov product gives
\begin{align*}
[j]\hatimes_{\Tt_X}[M]
    &=[\Ff_{X,I},\iota_{\Ff I},0,\alpha_X^\infty]-[\Ff_{X,I}\ominus A,\pi_1\circ j,0,\alpha_X^\infty]\\
    &=[\Ff_{X,I},\iota_{\Ff I},0,\alpha_X^\infty]\hatimes_I[\iota_I]+[\Ff_{X,I},\iota_{\Ff I},0,\alpha_X^\infty]\hatimes[{}_IX]\\
    &=[\Ff_{X,I},\iota_{\Ff I},0,\alpha_X^\infty]\hatimes_I([\iota_I]-[{}_IX]).\qedhere
\end{align*}
\end{proof}
\end{lem}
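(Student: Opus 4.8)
The plan is to compute $[j]\hatimes_{\Tt_X}[M]$ directly from the definitions and then rewrite the answer in the claimed form. Since $[j]$ is the class of the graded homomorphism $j\colon\Kk(\Ff_{X,I})\to\Tt_X$, I would first invoke \cite[Proposition~18.7.2(b)]{B} to write
\[
[j]\hatimes_{\Tt_X}[M]=\left[\Ff_X\oplus(\Ff_X\ominus A),\ (\pi_0\oplus(\pi_1\circ\alpha_\Tt))\circ j,\ \left(\begin{smallmatrix}0&1\\P&0\end{smallmatrix}\right),\ \left(\begin{smallmatrix}\alpha_X^\infty&0\\0&-\alpha_X^\infty\end{smallmatrix}\right)\right],
\]
the pullback of $M$ along $j$. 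The point of precomposing with $j$ is that $\pi_0\circ j=\iota_{\Ff I}$ and, as $j$ is graded, $\pi_1\circ\alpha_\Tt\circ j=\pi_1\circ j\circ\alpha_\Kk$, and both of these take values in $\Kk(\Ff_{X,I})\subseteq\Ll(\Ff_X)$; that is, the left action is now by compact operators, so the conditions defining a Kasparov module hold for any choice of Fredholm operator. Consequently the straight-line path from $\left(\begin{smallmatrix}0&1\\P&0\end{smallmatrix}\right)$ to $0$ is an operator homotopy, and I may replace the Fredholm operator by $0$. The resulting module splits as a direct sum, and, using the standard graded manipulations (reversing the grading on a module with zero Fredholm operator negates its class, and precomposing the left action with $\alpha_\Kk$ is absorbed by conjugating with the even unitary $\alpha_X^\infty$), this reduces to
\[
[j]\hatimes_{\Tt_X}[M]=[\Ff_X,\iota_{\Ff I},0,\alpha_X^\infty]-[\Ff_X\ominus A,\pi_1\circ j,0,\alpha_X^\infty].
\]

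Next I would pass to essential submodules: since $\overline{\Kk(\Ff_{X,I})\Ff_X}=\Ff_{X,I}$ and $\overline{\Kk(\Ff_{X,I})(\Ff_X\ominus A)}=\Ff_{X,I}\ominus A$, the discussion of essential submodules in \cite[Section~17.5]{B} allows me to replace $\Ff_X$ by $\Ff_{X,I}$ and $\Ff_X\ominus A$ by $\Ff_{X,I}\ominus A$ without changing the classes. Then there are two graded unitary equivalences to establish. First, $\xi\otimes a\mapsto\xi\cdot a$ gives $(\Ff_{X,I}\hatimes_I A_A,\iota_{\Ff I}\hatimes1,0,\alpha_X^\infty\hatimes\alpha_A)\cong(\Ff_{X,I},\iota_{\Ff I},0,\alpha_X^\infty)$ as right $A$-modules, which, since $[\iota_I]$ is the class of the inclusion $I\hookrightarrow A$, shows $[\Ff_{X,I},\iota_{\Ff I},0,\alpha_X^\infty]_I\hatimes_I[\iota_I]=[\Ff_{X,I},\iota_{\Ff I},0,\alpha_X^\infty]_A$. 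Second, using that $X$ is essential, the map sending $i\cdot\xi$ to $i\otimes\xi$ (for $i\in I$, $\xi\in X$) and $\xi_1\otimes\cdots\otimes\xi_n$ to $(\xi_1\otimes\cdots\otimes\xi_{n-1})\otimes\xi_n$ should be a unitary equivalence $(\Ff_{X,I}\ominus A,\pi_1\circ j,0,\alpha_X^\infty)\cong(\Ff_{X,I}\hatimes_I X,(\pi_0\circ j)\hatimes1,0,\alpha_X^\infty\hatimes\alpha_X)$; combined with \cite[Proposition~18.10.1]{B}, whose hypothesis $(\iota_{\Ff I}\hatimes1)(\Kk(\Ff_{X,I}))\subseteq\Kk(\Ff_{X,I}\hatimes_I X)$ holds by \cite[Proposition~4.7]{Lance} because $I$ acts by compact operators on $X$, this identifies $[\Ff_{X,I}\ominus A,\pi_1\circ j,0,\alpha_X^\infty]$ with $[\Ff_{X,I},\iota_{\Ff I},0,\alpha_X^\infty]\hatimes_I[{}_IX]$.

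Substituting these two identities into the expression for $[j]\hatimes_{\Tt_X}[M]$ and using distributivity of the Kasparov product over addition then gives the claim. I expect the step requiring the most care to be the grading bookkeeping: tracking the sign twists coming from the graded tensor product and from the $-\alpha_X^\infty$ summand of $M$ through the chain of equivalences, and checking that each equivalence is genuinely graded and that the connecting maps are adjointable. The place where the argument genuinely uses the standing hypothesis that $X$ is essential — rather than injectivity or compactness of $\varphi$, which were assumed in \cite{KPS6} — is the unitary equivalence $\Ff_{X,I}\ominus A\cong\Ff_{X,I}\hatimes_I X$; everything else is a graded elaboration of Pimsner's original computation.
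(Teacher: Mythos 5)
Your proposal is correct and follows essentially the same route as the paper's own proof: pulling back $M$ along $j$ via \cite[Proposition~18.7.2(b)]{B}, contracting the Fredholm operator to $0$ by operator homotopy, passing to essential submodules, and identifying the two summands with $[\Ff_{X,I},\iota_{\Ff I},0,\alpha_X^\infty]\hatimes_I[\iota_I]$ and $[\Ff_{X,I},\iota_{\Ff I},0,\alpha_X^\infty]\hatimes_I[{}_IX]$ via the same two unitary equivalences and the same appeals to \cite[Proposition~18.10.1]{B} and \cite[Proposition~4.7]{Lance}. Your closing remarks about where essentiality (rather than injectivity or compactness of $\varphi$) enters match the paper's use of these hypotheses exactly.
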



Our next result appears in the first author's honours thesis \cite[Theorem~7.0.3]{PattersonHons} for $I = \varphi^{-1}(\Kk(X))$, in the context where $\varphi$ is injective. For
the definition of the relative Cuntz--Pimsner algebra $\Oo_{X, I}$ see
Section~\ref{sec:background}.

\begin{thm}
Let $(A,\alpha_A),(B,\alpha_B)$ be graded separable $C^*$-algebras and suppose that $A$ is nuclear. Let $X$ be a
countably generated essential $A$--$A$-correspondence with left
action $\varphi$, and let $I \subseteq
\varphi^{-1}(\Kk(X))$ be a graded ideal of $A$. Let $\iota_I : I\to A$ be the inclusion map. Then
we have six term exact sequences
\[			
\begin{tikzpicture}[yscale=0.6, >=stealth]
	\node (00) at (0,0) {$KK_1(B, \Oo_{X,I})$};
	\node (40) at (4,0) {$KK_1(B, A)$};
	\node (80) at (8,0) {$KK_1(B, I)$};
	\node (82) at (8,2) {$KK_0(B, \Oo_{X,I})$};
	\node (42) at (4,2) {$KK_0(B, A)$};
	\node (02) at (0,2) {$KK_0(B, I)$};
	\draw[->] (02)-- node[above] {${\scriptstyle{\hatimes_A ([\iota_I] - [{_IX}])}}$} (42);
	\draw[->] (42)-- node[above] {${\scriptstyle i_*}$} (82);
	\draw[->] (82)--(80);
	\draw[->] (80)-- node[above] {${\scriptstyle{\hatimes_A ([\iota_I] - [{_IX}])}}$} (40);
	\draw[->] (40)-- node[above] {${\scriptstyle i_*}$} (00);
	\draw[->] (00)--(02);
\end{tikzpicture}
\]
and
\[
\begin{tikzpicture}[yscale=0.6, >=stealth]
	\node (00) at (0,0) {$KK_1(\Oo_{X,I}, B)$};
	\node (40) at (4,0) {$KK_1(A, B)$};
	\node (80) at (8,0) {$KK_1(I, B)$};
	\node (82) at (8,2) {$KK_0(\Oo_{X,I}, B)$};
	\node (42) at (4,2) {$KK_0(A, B)$};
	\node (02) at (0,2) {$KK_0(I, B)$};
	\draw[<-] (02)-- node[above] {${\scriptstyle{([\iota_I] - [{_IX}]) \hatimes_A}}$} (42);
	\draw[<-] (42)-- node[above] {${\scriptstyle i^*}$} (82);
	\draw[<-] (82)--(80);
	\draw[<-] (80)-- node[above] {${\scriptstyle{([\iota_I] - [{_IX}]) \hatimes_A}}$} (40);
	\draw[<-] (40)-- node[above] {${\scriptstyle i^*}$} (00);
	\draw[<-] (00)--(02);
\end{tikzpicture}
\]
\end{thm}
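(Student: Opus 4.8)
The plan is to apply the mapping cone / Puppe exact sequence in $KK$-theory to the short exact sequence of graded $C^*$-algebras
\[
0 \longrightarrow \Kk(\Ff_{X,I}) \xrightarrow{\ j\ } \Tt_X \longrightarrow \Oo_{X,I} \longrightarrow 0,
\]
which was recalled in the background section (via \cite[Theorem~2.19]{MS}). Since $A$ is separable and nuclear, $\Tt_X$ is nuclear, hence this sequence is semisplit, so it induces six-term exact sequences in both variables of $KK$ by \cite[Sections~19.5--19.6]{B}. In the covariant variable $KK_*(B,-)$ this reads
\[
\cdots \to KK_i(B,\Kk(\Ff_{X,I})) \xrightarrow{j_*} KK_i(B,\Tt_X) \to KK_i(B,\Oo_{X,I}) \to KK_{i+1}(B,\Kk(\Ff_{X,I})) \to \cdots,
\]
and dually for $KK_*(-,B)$.

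Next I would transport this sequence along the $KK$-equivalences we have already established. By Lemma~\ref{Lemma: A and TtX KKequiv}, $[i_A]$ and $[M]$ are mutually inverse, so Kasparov product with $[M]$ (respectively $[i_A]$) gives isomorphisms $KK_i(B,\Tt_X)\cong KK_i(B,A)$ (resp. in the other variable). Similarly, the graded Morita equivalence $\Kk(\Ff_{X,I}) \sim I$ (the imprimitivity bimodule being $\Ff_{X,I}$, with the $KK$-equivalence $[\Ff_{X,I},\iota_{\Ff I},0,\alpha_X^\infty]$ and its dual, as in the Morita-invariance discussion of Section~\ref{sec:background}) gives $KK_i(B,\Kk(\Ff_{X,I}))\cong KK_i(B,I)$. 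Making these substitutions turns the exact sequence into one whose terms are $KK_i(B,I)$, $KK_i(B,A)$, $KK_i(B,\Oo_{X,I})$ as desired; it only remains to identify the two connecting-type maps $KK_i(B,I)\to KK_i(B,A)$ and $KK_i(B,A)\to KK_i(B,\Oo_{X,I})$ with the claimed maps $\hatimes_A([\iota_I]-[{}_IX])$ and $i_*$.

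The identification of the first map is exactly where Lemma~\ref{Lemma: j tensor M} does the work: under the chosen equivalences, the map induced by $j_*$ becomes $\hatimes_{\Tt_X}([j]\hatimes_{\Tt_X}[M])$ after we have absorbed $[M]$ on the target and the Morita class on the source, and Lemma~\ref{Lemma: j tensor M} identifies $[j]\hatimes_{\Tt_X}[M]$ with $[\Ff_{X,I},\iota_{\Ff I},0,\alpha_X^\infty]\hatimes_I([\iota_I]-[{}_IX])$ — precisely the composite of the Morita identification $KK_i(B,I)\cong KK_i(B,\Kk(\Ff_{X,I}))$ with $\hatimes_A([\iota_I]-[{}_IX])$. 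So after cancelling the Morita factor, the first map is $\hatimes_A([\iota_I]-[{}_IX])$. For the second map, I would note that the composite $A\xrightarrow{i_A}\Tt_X\to\Oo_{X,I}$ is just the canonical inclusion $i\colon A\to\Oo_{X,I}$ (the vertex/coefficient embedding), so under the identification $KK_i(B,\Tt_X)\cong KK_i(B,A)$ via $[i_A]$ the quotient-induced map $KK_i(B,\Tt_X)\to KK_i(B,\Oo_{X,I})$ becomes $i_*$; this is a routine chase of the functoriality of the mapping-cone sequence together with $[i_A]\hatimes[\text{quotient}]=[i]$. The contravariant sequence is obtained symmetrically, using $KK_i(-,B)$, the same $KK$-equivalences, and Lemma~\ref{Lemma: j tensor M} read as a statement about left Kasparov multiplication.

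The main obstacle I anticipate is bookkeeping rather than conceptual: one must check that all of the $KK$-equivalences used (the $[i_A]$–$[M]$ pair and the Morita class of $\Ff_{X,I}$) are compatible with the gradings and that the chosen directions of Kasparov products line up to make the substituted diagram commute, including getting the signs/orientations right so that exactness is preserved. In particular the degree-shifting connecting map of the mapping-cone sequence must be tracked carefully through two different equivalences on its source and target. Once Lemma~\ref{Lemma: j tensor M} is in hand, though, the core computation is done, and what remains is verifying naturality of the six-term sequence under composition with fixed invertible $KK$-classes — standard, if tedious.
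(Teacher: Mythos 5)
Your proposal is correct and follows essentially the same route as the paper: apply the six-term sequences for the semisplit (by nuclearity of $\Tt_X$) extension $0 \to \Kk(\Ff_{X,I}) \to \Tt_X \to \Oo_{X,I} \to 0$, then transport along the $KK$-equivalences given by the $[i_A]$--$[M]$ pair and the graded Morita class of $\Ff_{X,I}$, identifying the maps via Lemma~\ref{Lemma: j tensor M} and the factorisation $i = q \circ i_A$. The paper organises the bookkeeping you anticipate as a ten-term commuting diagram with the Skandalis sequence as its interior rectangle and the connecting map $\delta'$ defined as $\delta$ composed with the Morita class, but the content is the same.
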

\begin{proof}
As in \cite[Theorem~4.4]{KPS6}, we shall prove exactness of the first diagram. Exactness of the
second follows from a similar argument. Since $A$ is nuclear, so is $\Tt_X$ by
\cite[Theorem~6.3]{RRS}. Hence the quotient map $q:\Tt_X\to\Oo_{X,I}\cong \Tt_X/j(\Kk(\Ff_{X,
I}))$ admits a completely positive splitting, see \cite[Example~19.5.2]{B}. Applying
\cite[Theorem~1.1]{S} to the graded short exact sequence
\[
\begin{tikzpicture}[yscale=0.6, >=stealth]
	\node (00) at (0,0) {$0$};
	\node (20) at (2,0) {$\Kk(\Ff_{X,I})$};
	\node (40) at (4,0) {$\Tt_X$};
	\node (60) at (6,0) {$\Oo_{X,I}$};
	\node (80) at (8,0) {$0$};
	\draw[<-] (20)--(00);
	\draw[<-] (40)-- node[above] {$j$} (20);
	\draw[<-] (60)-- node[above] {$q$} (40);
	\draw[<-] (80)--(60);
\end{tikzpicture}
\]
gives homomorphisms $\delta:KK_*(B,\Oo_{X,I})\to KK_{*+1}(B,\Kk(\Ff_{X,I}))$ for which
the following six term sequence is exact
\[
\begin{tikzpicture}[yscale=0.6, >=stealth]
	\node (00) at (0,0) {$KK_1(B, \Oo_{X,I})$};
	\node (40) at (4,0) {$KK_1(B, \Tt_X)$};
	\node (80) at (8,0) {$KK_1(B, \Kk(\Ff_{X,I}))$};
	\node (82) at (8,2) {$KK_0(B, \Oo_{X,I})$};
	\node (42) at (4,2) {$KK_0(B, \Tt_X)$};
	\node (02) at (0,2) {$KK_0(B, \Kk(\Ff_{X,I}))$};
	\draw[->] (02)-- node[above] {${\scriptstyle j_*}$} (42);
	\draw[->] (42)-- node[above] {${\scriptstyle q_*}$} (82);
	\draw[->] (82)-- node[right] {$\scriptstyle\delta$} (80);
	\draw[->] (80)-- node[above] {${\scriptstyle j_*}$} (40);
	\draw[->] (40)-- node[above] {$\scriptstyle q_*$} (00);
	\draw[->] (00)-- node[left] {$\scriptstyle\delta$} (02);
\end{tikzpicture}
\]
Define $\delta':KK_*(B,\Oo_{X,I})\to KK_{*+1}(B,A)$ by
$\delta'=(\cdot\hatimes[\Ff_{X,I},\iota_{\Ff I},0,\alpha_X^\infty])\circ\delta$ and let $i:A\to\Oo_{X,I}$
be the canonical homomorphism. Consider the following diagram.
\[
\begin{tikzpicture}[yscale=0.8]
    \node (00) at (0,0) {$KK_1(B, \Oo_{X,I})$};
    \node (40) at (4,0) {$KK_1(B, \Tt_X)$};
    \node (80) at (8,0) {$KK_1(B, \Kk(\Ff_{X,I}) )$};
    \node (82) at (8,2) {$KK_0(B, \Oo_{X,I})$};
    \node (42) at (4,2) {$KK_0(B, \Tt_X)$};
    \node (02) at (0,2) {$KK_0(B, \Kk(\Ff_{X,I}))$};
    \draw[-stealth] (02)-- node[above] {${\scriptstyle j_*}$} (42);
    \draw[-stealth] (42)-- node[above] {${\scriptstyle q_*}$} (82);
    \draw[-stealth] (82)-- node[right] {${\scriptstyle \delta}$} (80);
    \draw[-stealth] (80)-- node[below] {${\scriptstyle j_*}$} (40);
    \draw[-stealth] (40)-- node[below] {${\scriptstyle q_*}$} (00);
    \draw[-stealth] (00)-- node[left] {${\scriptstyle \delta}$} (02);
    \node (00') at (-2,-2) {$KK_1(B, \Oo_{X,I})$};
    \node (40') at (4,-2) {$KK_1(B, A)$};
    \node (80') at (10,-2) {$KK_1(B, I)$};
    \node (02') at (-2,4) {$KK_0(B, I)$};
    \node (42') at (4,4) {$KK_0(B, A)$};
    \node (82') at (10,4) {$KK_0(B, \Oo_{X,I})$};
    \draw[-stealth] (02')-- node[above] {${\scriptstyle \hatimes ([\iota_I] - [{_IX}])}$} (42');
    \draw[-stealth] (42')-- node[above] {${\scriptstyle i_*}$} (82');
    \draw[-stealth] (82')-- node[right] {${\scriptstyle \delta'}$} (80');
    \draw[-stealth] (80')-- node[below] {${\scriptstyle \hatimes ([\iota_I] - [{_IX}])}$} (40');
    \draw[-stealth] (40')-- node[below] {${\scriptstyle i_*}$} (00');
    \draw[-stealth] (00')-- node[left] {${\scriptstyle \delta'}$} (02');
    \draw[-stealth] (02)--(02') node[pos=0.25, anchor=south west, inner sep=0pt] {$\scriptstyle \hatimes [\Ff_{X,I},\iota_{\Ff I},0,\alpha_X^\infty]$};
    \draw[-stealth, out=75, in=285] (42) to node[pos=0.5, right] {${\scriptstyle\hatimes[M]}$} (42');
    \draw[-stealth, out=255, in=105] (42') to node[pos=0.5, left] {${\scriptstyle (i_A)_*}$} (42);
    \draw[-stealth] (82') to node[anchor=south east, inner sep=1pt] {$\scriptstyle\id$} (82);
    \draw[-stealth] (80)--(80') node[pos=0.25,  anchor=north east, inner sep=0pt] {$\scriptstyle \hatimes [\Ff_{X,I},\iota_{\Ff I},0,\alpha_X^\infty]$};
    \draw[-stealth, out=255, in=105] (40) to node[pos=0.5, left] {${\scriptstyle\hatimes[M]}$} (40');
    \draw[-stealth, out=75, in=285] (40') to node[pos=0.5, right] {${\scriptstyle (i_A)_*}$} (40);
    \draw[-stealth] (00') to node[anchor=north west, inner sep=1pt] {$\scriptstyle\id$} (00);
\end{tikzpicture}
\]
By definition of $\delta'$, the left and right squares commute. Lemma~\ref{Lemma: j tensor M}
shows that the top left and bottom right squares commute. By definition we have $i=q\circ i_A$ as
homomorphisms, so $i_*=(q\circ i_A)_*=q_*\circ (i_A)_*$. This shows that the top right and bottom
left squares commute.

Lemma
\ref{Lemma: A and TtX KKequiv} implies that $(i_A)_*$ and $\hatimes[M]$ are mutually inverse
isomorphisms. Finally, the class of $[\Ff_{X,I},\iota_{\Ff I},0,\alpha_X^\infty]$ is induced by the graded
Morita equivalence bimodule $\Ff_{X, I}$ (see \cite{tfb}), and so induces an isomorphism of $KK$-groups, so exactness of the interior 6-term sequence gives the desired exactness of the exterior one.
\end{proof}

\section{Graded \texorpdfstring{$K$}{K}-theory and \texorpdfstring{$K$}{K}-homology for relative Cuntz--Krieger algebras}\label{sec:graph
algebras}

In this section we use our results from the preceding section to calculate the graded $K$-theory
and graded $K$-homology of a graded relative graph $C^*$-algebra.

We first recall the key elements of relative graph $C^*$-algebras that we will use here. Given a
directed graph $E = (E^0, E^1, r, s)$, we denote by $E^0_{\rg}$ the set
\[
E^0_{\rg} := \{v \in E^0 : vE^1\text{ is finite and nonempty}\}
\]
of \emph{regular vertices} of $E$. Given a subset $V \subseteq E^0_{\rg}$, the \emph{relative
Cuntz--Krieger algebra} $C^*(E; V)$ of $E$ is defined as the universal $C^*$-algebra generated by
mutually orthogonal projections $\{p_v : v \in E^0\}$ and partial isometries $\{s_e : e \in E^1\}$
such that $s_e^*s_e = p_{s(e)}$ for all $e \in E^1$, and such that $p_v = \sum_{e \in vE^1} s_e
s^*_e$ for all $v \in V$. So $C^*(E; \emptyset)$ coincides with the usual Toeplitz algebra $\Tt
C(E)$, and $C^*(E; E^0_{\rg})$ coincides with the graph $C^*$-algebra $C^*(E)$. Given a function
$\delta : E^1 \to \{0,1\}$, the universal property of $C^*(E; V)$ yields a grading $\alpha^\delta$
of $C^*(E; V)$ satisfying $\alpha^\delta(s_e) = (-1)^{\delta(e)} s_e$ for all $e \in E^1$, and
$\alpha^\delta(p_v) = p_v$ for all $v \in E^0$.

There is a $C_0(E^0)$-valued inner-product on $C_c(E^1)$ given by
\[
\langle \xi, \eta\rangle_{C_0(E^0)}(w) = \sum_{e \in E^1 w} \overline{\xi(e)}\eta(e).
\]
The completion $X(E)$ of $C_c(E^1)$ in the resulting norm is a $C_0(E)$--$C_0(E)$-correspondence
with actions determined by $(a \cdot \xi \cdot b)(e) = a(r(e)) \xi(e) b(s(e))$. The ideal $C
\subseteq C_0(E^0)$ of elements that act by compact operators on this module is $C_0(\{v : |vE^1|
< \infty\})$. A routine argument using universal properties shows that for $V \subseteq
E^0_{\rg}$, we have $\Oo_{X(E), C_0(V)} \cong C^*(E; V)$; in particular, $\Tt_{X(E)} \cong C^*(E;
\emptyset) = \Tt C^*(E)$. Any map $\delta : E^1 \to \{0,1\}$ determines a grading $\alpha_X$ of
$X(E)$ satisfying $\alpha_X(1_e) = (-1)^{\delta(e)} 1_e$ for $e \in E^1$, and then the induced
grading on $\Oo_{X(E), C_0(V)}$ matches up with the grading $\alpha^\delta$ on $C^*(E; V)$.

Our main result for the section is the
following.

\begin{thm}\label{thm2}
Let $E$ be a directed graph. Fix a subset $V \subseteq E^0_{\rg}$ and a function $\delta : E^1 \to
\ZZ_2$. Let $\alpha \in \Aut(C^*(E; V))$ be the grading such that $\alpha(s_e) = (-1)^{\delta(e)}
s_e$ for all $e \in E^1$. Let $A^\delta_V$ denote the $V \times E^0$ matrix such that
$A^\delta_V(v,w) = \sum_{e \in vE^1 w} (-1)^{\delta(e)}$ for all $v \in V$ and $w \in E^0$.
Let $\iota : \ZZ V \to \ZZ E^0$ be the inclusion map. Regarding $(A^\delta_V)^t$ as a
homomorphism from $\ZZ V$ to $\ZZ E^0$, the graded $K$-theory of $(C^*(E; V), \alpha)$ is given by
\[
    \KTgr0(C^*(E; V), \alpha) \cong \coker(\iota - (A^\delta_V))^t\quad\text{ and }\quad
    \KTgr1(C^*(E; V), \alpha) \cong \ker(\iota - (A^\delta_V))^t.
\]
There is a homomorphism $(\widetilde{A}^\delta_V) : \ZZ^{E^0}\to \ZZ^{V}$
given by
\[\textstyle
(\widetilde{A}^\delta_V)(f)(v) = \sum_{e \in vE^1} A^\delta_V(v,w) f(w)\quad\text{ for all $v \in V$ and $f \in \ZZ^{E^0}$.}
\]
Let $\pi : \ZZ^{E^0} \to \ZZ^V$ be the projection map. Then
\[
    \KHgr0(C^*(E; V), \alpha) \cong \ker(\pi - \widetilde{A}^\delta_V)\quad\text{ and }\quad
    \KHgr1(C^*(E; V), \alpha) \cong \coker(\pi - \widetilde{A}^\delta_V).
\]
\end{thm}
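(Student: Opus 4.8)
The plan is to specialise the two six-term exact sequences of the preceding theorem to the case $A = C_0(E^0)$, $X = X(E)$, $I = C_0(V)$, and $B = \CC$, so that the contravariant sequence computes $K$-theory $KK_*(\CC, C^*(E;V))$ and the covariant sequence computes $K$-homology $KK_*(C^*(E;V), \CC)$. Both reduce to long exact sequences of abelian groups in which the connecting-type maps are $\hatimes_A([\iota_I] - [{}_IX])$ and its transpose; the whole content of the proof is identifying these $KK$-classes and the induced maps on $K$-theory/$K$-homology concretely as matrices over $\ZZ$.

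The first step is to compute the $K$-theory and $K$-homology of the commutative algebras: $KK_0(\CC, C_0(V)) \cong \ZZ V$, $KK_0(\CC, C_0(E^0)) \cong \ZZ E^0$ with the odd groups vanishing, and dually $KK_0(C_0(V), \CC) \cong \ZZ^V$, $KK_0(C_0(E^0), \CC) \cong \ZZ^{E^0}$ with vanishing odd groups. Under these identifications, $(\iota_I)_* : KK_0(\CC, C_0(V)) \to KK_0(\CC, C_0(E^0))$ is exactly the inclusion $\iota : \ZZ V \to \ZZ E^0$ of basis vectors (since $\iota_I$ just includes $C_0(V)$ as the ideal of functions supported on $V$), and dually $\iota_I^*$ is the coordinate projection $\pi : \ZZ^{E^0} \to \ZZ^V$. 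The second step is the key computation: identify the class $[{}_IX] = [X(E), \varphi|_{C_0(V)}, 0, \alpha_X] \in KK(C_0(V), C_0(E^0))$ and show that the induced map $KK_0(\CC, C_0(V)) \to KK_0(\CC, C_0(E^0))$ is $(A^\delta_V)^t : \ZZ V \to \ZZ E^0$, while the induced map $KK_0(C_0(E^0), \CC) \to KK_0(C_0(V), \CC)$ is $\widetilde{A}^\delta_V : \ZZ^{E^0} \to \ZZ^V$. This is where the grading enters: restricting to $v \in V$ (so that $vE^1$ is finite), the module $X(E) \cdot p_v$ decomposes over the grading of its finitely many generators $\{1_e : e \in vE^1\}$, each $1_e$ contributing $p_{s(e)}$ to the inner product but with sign $(-1)^{\delta(e)}$ recorded by $\alpha_X$; tracking how this sign propagates through the $K$-theory pairing (equivalently, computing the class of the projection defining the finitely generated projective module after incorporating the grading-induced sign) yields precisely the alternating-sum entries $A^\delta_V(v,w) = \sum_{e \in vE^1 w}(-1)^{\delta(e)}$. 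I would phrase this via the Morita/finite-rank description: for $v \in V$, $X(E) p_v \cong \bigoplus_{e \in vE^1} p_{s(e)} C_0(E^0)$ as a graded module, and the class in $KK(\CC, C_0(E^0))$ of the image of the generator $[p_v]$ is $\sum_{e \in vE^1} (-1)^{\delta(e)} [p_{s(e)}]$, by additivity of the $KK$-class over the grading-twisted direct summands.

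Having identified the maps, the third step is bookkeeping: the map $\hatimes_A([\iota_I] - [{}_IX])$ on $KK_0(\CC, -)$ becomes $\iota - (A^\delta_V)^t : \ZZ V \to \ZZ E^0$, and since the odd $K$-theory of both $C_0(V)$ and $C_0(E^0)$ vanishes, the contravariant six-term sequence collapses to
\[
0 \to \KTgr1(C^*(E;V),\alpha) \to \ZZ V \xrightarrow{\iota - (A^\delta_V)^t} \ZZ E^0 \to \KTgr0(C^*(E;V),\alpha) \to 0,
\]
giving $\KTgr1 \cong \ker(\iota - (A^\delta_V)^t)$ and $\KTgr0 \cong \coker(\iota - (A^\delta_V)^t)$. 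Dually, the covariant sequence collapses to
\[
0 \to \KHgr0(C^*(E;V),\alpha) \to \ZZ^{E^0} \xrightarrow{\pi - \widetilde{A}^\delta_V} \ZZ^V \to \KHgr1(C^*(E;V),\alpha) \to 0,
\]
(note the shift in parity coming from the contravariance: $KK_0(I,\CC)$ sits in the position that yields $\KHgr1$), giving the stated kernel/cokernel descriptions. The main obstacle is the second step — getting the grading signs in $[{}_IX]$ exactly right, including the sign conventions in the graded $KK$-product and in the identification $KK_0(\CC, C_0(S)) \cong \ZZ S$, so that one genuinely obtains $A^\delta_V(v,w) = \sum_{e}(-1)^{\delta(e)}$ rather than $\sum_e 1$ or some other normalisation; everything else is a routine collapse of the exact sequences of the previous theorem together with standard facts about $K$-theory of commutative $C^*$-algebras.
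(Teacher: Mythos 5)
Your proposal is correct and follows essentially the same route as the paper: specialise the two six-term sequences of the preceding section to $A = C_0(E^0)$, $I = C_0(V)$, $B = \CC$, identify the $KK$-groups of the commutative algebras with $\ZZ V$, $\ZZ E^0$, $\ZZ^V$, $\ZZ^{E^0}$ (odd groups vanishing), identify $\cdot \hatimes [{}_I X(E)]$ and $[{}_I X(E)] \hatimes \cdot$ with $(A^\delta_V)^t$ and $\widetilde{A}^\delta_V$, and collapse to the two four-term sequences. Your sketch of the key sign computation via the graded decomposition $\varphi(p_v)X(E) \cong \bigoplus_{e \in vE^1} p_{s(e)}C_0(E^0)$ is exactly the content of the paper's Propositions~\ref{Prop: Khom direct sum iso}--\ref{prp: Jono thesis main} on the $K$-homology side and of \cite[Lemma~8.2]{KPS6} on the $K$-theory side.
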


Before proving the theorem, we state an immediate corollary about the graded $K$-theory and
$K$-homology of graph $C^*$-algebras.

\begin{cor}\label{cor.graph}
Let $E$ be a directed graph. Fix a function $\delta : E^1 \to \ZZ_2$, and let $\alpha$ be the
associated grading of $C^*(E)$. Let $A^\delta_{\rg}$ denote the $E^0_{\rg} \times E^0$ matrix such
that $A^\delta_{\rg}(v,w) = \sum_{e \in vE^1 w} (-1)^{\delta(e)}$ for all $v \in V$ and $w \in
E^0$ regarded as a homomorphism from $\ZZ E^0_{\rg}$ to $\ZZ E^0$ and write
$\widetilde{A}^\delta_{\rg}$ for the dual homomorphism $\ZZ^{E^0} \to \ZZ^{E^0_{\rg}}$. Then
\begin{align*}
    \KTgr0(C^*(E), \alpha) &\cong \coker(\iota - (A^\delta_{\rg}))^t,\qquad &\KTgr1(C^*(E), \alpha) &\cong \ker(\iota - (A^\delta_{\rg}))^t,\\
    \KHgr0(C^*(E), \alpha) &\cong \ker(\pi - \widetilde{A}^\delta_{\rg}),\quad\text{ and}\qquad &\KHgr1(C^*(E), \alpha) &\cong \coker(\pi - \widetilde{A}^\delta_{\rg}).
\end{align*}
\end{cor}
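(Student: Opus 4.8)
The plan is to apply the two six-term exact sequences of the preceding theorem to the relative Cuntz--Pimsner presentation $\Oo_{X(E), C_0(V)} \cong C^*(E; V)$, with coefficient algebra $A = C_0(E^0)$, ideal $I = C_0(V)$, and the grading $\alpha_X$ on $X(E)$ induced by $\delta$. First I would record that graded $K$-theory $\KTgr{i}(C^*(E;V),\alpha)$ is by definition $KK_i(\CC, C^*(E;V))$ and graded $K$-homology $\KHgr{i}$ is $KK_i(C^*(E;V), \CC)$, so I take $B = \CC$ (with trivial grading) in the covariant sequence to get $K$-theory and in the contravariant sequence to get $K$-homology. The key inputs I then need are: the identification of $KK_i(\CC, C_0(S))$ and $KK_i(C_0(S), \CC)$ for a countable discrete set $S$ — here $KK_0(\CC, C_0(S)) \cong \ZZ S$, $KK_1(\CC, C_0(S)) = 0$, $KK_0(C_0(S), \CC) \cong \ZZ^S$, $KK_1(C_0(S), \CC) = 0$ — together with a computation of the connecting map $\hatimes_A([\iota_I] - [{}_I X])$ (and its dual) on these groups.

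The main work is identifying the map $\hatimes_A([\iota_I] - [{}_I X]) : KK_0(\CC, C_0(V)) \to KK_0(\CC, C_0(E^0))$ with $\iota - (A^\delta_V)^t : \ZZ V \to \ZZ E^0$. I would argue this pointwise: the class $[\iota_I] \in KK(C_0(V), C_0(E^0))$ is induced by the inclusion, which under $KK_0(\CC, -) \cong \ZZ(\cdot)$ is exactly the inclusion $\iota : \ZZ V \hookrightarrow \ZZ E^0$ of generators. For $[{}_I X] = [X(E), \varphi|_{C_0(V)}, 0, \alpha_X] \in KK(C_0(V), C_0(E^0))$, I would compute the image of the generator $\delta_v \in \ZZ V \cong KK_0(\CC, C_0(V))$ (i.e.\ the rank-one projection $p_v$) under the induced map: since $v$ is regular, $p_v \cdot X(E)$ is the finitely generated projective $C_0(E^0)$-module spanned by $\{1_e : e \in vE^1\}$, which as an element of $K_0(C_0(E^0)) \cong \ZZ E^0$ decomposes, using the grading, as $\sum_{w} \big(\sum_{e \in vE^1 w}(-1)^{\delta(e)}\big)\delta_w = \sum_w A^\delta_V(v,w)\delta_w$. (The sign enters because a rank-one projection onto an \emph{odd} vector represents $-[\text{point}]$ in graded $K_0$; this is the standard fact underlying \cite{KPS6}.) Hence $\hatimes[{}_IX]$ is $(A^\delta_V)^t$ and the connecting map is $\iota - (A^\delta_V)^t$ as claimed. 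Feeding this into the six-term sequence, which collapses since the odd $KK$-groups of $C_0(V)$ and $C_0(E^0)$ vanish, yields a short exact sequence $0 \to \coker(\iota - (A^\delta_V)^t) \to \KTgr0 \to 0$ in even degree and an isomorphism $\KTgr1 \cong \ker(\iota - (A^\delta_V)^t)$, giving the $K$-theory formulas.

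For the $K$-homology statement I would run the identical argument through the contravariant six-term sequence with $B = \CC$, using $KK_0(C_0(S), \CC) \cong \ZZ^S$ and $KK_1(C_0(S),\CC) = 0$. The map $([\iota_I] - [{}_IX])\hatimes_A : KK_0(C_0(E^0), \CC) \to KK_0(C_0(V),\CC)$ is the transpose, at the level of $\Hom(\ZZ(\cdot),\ZZ)$-duals, of the map computed above, and a direct check identifies it with $\pi - \widetilde{A}^\delta_V : \ZZ^{E^0} \to \ZZ^V$ (the dual of $\iota$ is the coordinate projection $\pi$, and the dual of $(A^\delta_V)^t$ is $\widetilde{A}^\delta_V$, exactly the formula displayed in the statement). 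Exactness then gives $\KHgr0 \cong \ker(\pi - \widetilde{A}^\delta_V)$ and $\KHgr1 \cong \coker(\pi - \widetilde{A}^\delta_V)$. The expected obstacle is the sign bookkeeping in the graded $K_0$ computation of $[p_v \cdot X(E)]$: one must be careful that $X(E)$ is graded so that $1_e$ is homogeneous of degree $\delta(e)$, and that the generator of $KK_0(\CC, \CC)$ corresponding to an odd one-dimensional space is $-1$; getting this consistent is what produces $(-1)^{\delta(e)}$ rather than $1$ in the matrix entries, matching the convention of \cite{KPS6}. The corollary is then immediate by specialising $V = E^0_{\rg}$, since $C^*(E; E^0_{\rg}) = C^*(E)$ and $A^\delta_{\rg} = A^\delta_{E^0_{\rg}}$.
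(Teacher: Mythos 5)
Your proposal is correct and follows essentially the same route as the paper: the paper's proof of this corollary is literally the one-line specialisation $V = E^0_{\rg}$ of Theorem~\ref{thm2}, and your reconstruction of the underlying argument (take $B = \CC$ in the two six-term sequences for $\Oo_{X(E), C_0(V)}$, identify $KK_0(\CC, C_0(S)) \cong \ZZ S$ and $KK_0(C_0(S), \CC) \cong \ZZ^S$ with vanishing odd groups, and compute the connecting maps as $\iota - (A^\delta_V)^t$ and $\pi - \widetilde{A}^\delta_V$ via the graded class of $p_v \cdot X(E)$, with the $(-1)^{\delta(e)}$ signs coming from odd rank-one summands) is exactly how the paper proves that theorem, delegating the generator computations to \cite[Lemma~8.2]{KPS6} and Proposition~\ref{prp: Jono thesis main}.
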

\begin{proof}
Apply Theorem~\ref{thm2} to $V = E^0_{\rg}$.
\end{proof}

To prove the $K$-homology formulas in the theorem, we need some preliminary results; the
corresponding results for $K$-theory are established in \cite{KPS6}.

\begin{prp}\label{Prop: Khom direct sum iso}
Let $E$ be a row-finite directed graph. For $\nn\in \ZZ^{E^0}$ and $f\in C_0(E^0)$ let
$\ell(f)\in\Ll\big(\bigoplus_{v\in E^0}\CC^{|\nn_v|}\big)$ be given by
\[
       (\ell(f)x)_w=f(w)x_w, \ \ \textrm{for each} \ w\in E^0.
\]
Then there
is an isomorphism $\mu:\ZZ^{E^0}\to KK_0(C_0(E^0),\CC)$ such that
\[
\mu(\nn) = \Big[\bigoplus_{v\in E^0}\CC^{|\nn_v|},\ell,0,\bigoplus_{v\in E^0}\sign(\nn_v)\Big].
\]
\end{prp}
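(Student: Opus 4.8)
The plan is to realise $\mu$ as a composite of three isomorphisms and then to unwind the definition on a general $\nn$. The starting point is the classical fact that $KK_0(\CC,\CC)\cong\ZZ$ with unit $[\id_\CC]=[\CC,\id_\CC,0,1]$ as generator. I would record the two elementary properties of this group that enter the argument. First, reversing the grading of a Kasparov module negates its class, so $[\CC,\id_\CC,0,-1]=-[\id_\CC]$; this follows by linearly contracting the Fredholm operator of the degenerate module $(\CC^{2},\id,\left(\begin{smallmatrix}0&1\\1&0\end{smallmatrix}\right),\left(\begin{smallmatrix}1&0\\0&-1\end{smallmatrix}\right))$ to $0$. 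Second, direct sums of Kasparov modules add their classes, so for every $n\in\ZZ$
\[
n[\id_\CC]=[\CC^{|n|},\mathrm{diag},0,\sign(n)]\in KK_0(\CC,\CC),
\]
where $\mathrm{diag}$ is the diagonal (scalar) representation of $\CC$ on $\CC^{|n|}$, the grading is $\sign(n)$ times the identity operator, and the $n=0$ term is the zero module.

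Next I would write $C_0(E^0)$ as the $c_0$-direct sum $\bigoplus_{v\in E^0}\CC$ and use that $KK$ is countably additive in its first variable (this is automatic; cf.\ the introduction and \cite{B}): the coordinate embeddings $\iota_v\colon\CC\hookrightarrow C_0(E^0)$ onto the $v$-th summand induce an isomorphism $KK_0(C_0(E^0),\CC)\xrightarrow{\cong}\prod_{v\in E^0}KK_0(\CC,\CC)$, whose inverse takes a family $(z_v)_v$ of classes, with chosen representatives $(Y_v,\psi_v,G_v,\gamma_v)$, to the class of the external direct sum $\big(\bigoplus_v Y_v,\ \bigoplus_v(\psi_v\circ\mathrm{ev}_v),\ \bigoplus_v G_v,\ \bigoplus_v\gamma_v\big)$; restricting this representation along $\iota_w$ kills every summand with $v\neq w$ (which then contributes a degenerate module) and returns $z_w$ on the $w$-th summand. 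Composing this isomorphism with the coordinatewise application of $KK_0(\CC,\CC)\cong\ZZ$ from the previous paragraph produces an isomorphism $\mu\colon\ZZ^{E^0}=\prod_{v\in E^0}\ZZ\to KK_0(C_0(E^0),\CC)$.

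It remains to evaluate $\mu$. By construction $\mu(\nn)$ is the class of the external direct sum over $v\in E^0$ of the modules $[\CC^{|\nn_v|},\mathrm{diag}\circ\mathrm{ev}_v,0,\sign(\nn_v)]$; since $\mathrm{ev}_v$ extracts the $v$-th coordinate, the assembled representation sends $f\in C_0(E^0)$ to the operator acting as $f(v)$ on each block $\CC^{|\nn_v|}$, that is, precisely to $\ell(f)$, and the assembled grading is $\bigoplus_v\sign(\nn_v)$, which is the stated formula. The one step that genuinely needs checking, and the main (if mild) obstacle, is that $\big(\bigoplus_{v\in E^0}\CC^{|\nn_v|},\ell,0,\bigoplus_v\sign(\nn_v)\big)$ really is a Kasparov $C_0(E^0)$--$\CC$-module: the Hilbert space is separable, the conditions on $F=0$ are vacuous, and $(F^2-1)\ell(f)=-\ell(f)$ is compact for every $f\in C_0(E^0)$ because $f$ vanishes at infinity, so $\ell(f)$ is the norm limit of the finite-rank operators obtained by truncating to $\{v:|f(v)|\ge\eps\}$. (Row-finiteness of $E$ is irrelevant here; only $C_0(E^0)$ enters.)
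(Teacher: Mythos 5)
Your proof is correct and follows essentially the same route as the paper: identify $C_0(E^0)$ with $\bigoplus_{v\in E^0}\CC$ and invoke countable additivity of $KK(\cdot,\CC)$ in the first variable (Theorem~19.7.1 of \cite{B}) via the coordinate inclusions, with $\mu$ the inverse of the resulting product isomorphism. The only difference is that you additionally unwind the inverse to verify the stated formula for $\mu(\nn)$ (including the sign convention via $[\CC,\id_\CC,0,-1]=-[\id_\CC]$) and check the Kasparov-module conditions, details the paper leaves implicit.
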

\begin{proof}
Since $E^0$ is discrete we can identify $C_0(E^0)$ with $\bigoplus_{v\in E^0}\CC$.	For each $w\in E^0$, let
$g_w:\CC\to\bigoplus_{v\in E^0}\CC$ be the coordinate inclusion. Then Theorem~19.7.1 of \cite{B} implies that $\theta:=\prod_{v\in
E^0}g_w^*:KK(\bigoplus_{v\in E^0}\CC,\CC)\to\prod_{v\in E^0}KK(\CC,\CC)\cong\ZZ^{E^0}$ is an
isomorphism of groups, with inverse $\mu$.
\end{proof}

\begin{lem}\label{Lemma: techLemUnitaryEquiv}
Let $E$ be a directed graph and let $\delta : E^1 \to \{0,1\}$ be a function. Fix a subset $V \subseteq E^0_{\rg}$. Fix $v\in E^0$. For $f\in E^1v$
and $a\in C_0(E^0)$, define $\psi^v(a):\ell^2(E^1v)\to\ell^2(E^1v)$ on elementary basis vectors
$\{e^f\}\subseteq\ell^2(E^1v)$ by
\[
    \psi^v(a)e^f =
        \begin{cases}
            a(r(f))e^f &\text{ if $r(f) \in V$}\\
            0 &\text{ otherwise.}
        \end{cases}
\]
Define $\beta:\ell^2(E^1v)\to\ell^2(E^1v)$ on basis elements by
\[
\beta(e^f)=(-1)^{\delta(f)}e^f.
\]
Let $\phi_V : C_0(V) \to \Kk(X(E))$ be the restriction of
the left action. Then $(\ell^2(E^1v),\psi^v,0,\beta)$ is a Kasparov $C_0(E^0)$--$\CC$-module, and
for each $\nn_v\in\ZZ$ there is a unitary equivalence between the modules
\[
(\ell^2(E^1v)\hatimes\CC^{|\nn_v|},\tilde{\psi},0,\beta\hatimes\sign(\nn_v)\id)
\]
and
\[
(X(E)\hatimes_{\eps_v}\CC^{|\nn_v|},\tilde\phi_V,0,\alpha_X\hatimes\sign(\nn_v)\id).
\]
\end{lem}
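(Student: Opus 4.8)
The plan is to handle the two assertions separately: the first is a Kasparov-module axiom check whose only substantive point is compactness, and the second is an explicit identification of Hilbert spaces.

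For the first assertion, I would note that $\ell^2(E^1v)$ is a separable Hilbert space, hence a countably generated right Hilbert $\CC$-module; that $\beta$ is a self-adjoint unitary, hence a grading; that $\psi^v$ is a bounded $*$-homomorphism $C_0(E^0)\to\Ll(\ell^2(E^1v))$, being diagonal in the basis $\{e^f\}$ with entries of modulus at most $\|a\|$; and that $\psi^v(a)$ commutes with $\beta$, so that (as $C_0(E^0)$ carries the trivial grading) $\psi^v$ is a graded homomorphism and $F:=0$ is odd. Of the three operators $(F-F^*)\psi^v(a)$, $(F^2-1)\psi^v(a)$, $[F,\psi^v(a)]^{\operatorname{gr}}$ the first and third vanish, so it remains only to show $\psi^v(a)\in\Kk(\ell^2(E^1v))$. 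Since $\psi^v(a)$ is the diagonal operator with entry $a(r(f))$ at $f$ when $r(f)\in V$ and $0$ otherwise, I would argue that for each $\eps>0$ only finitely many $f\in E^1v$ satisfy $r(f)\in V$ and $|a(r(f))|\geq\eps$: the set $\{w\in V : |a(w)|\geq\eps\}$ is finite because $a\in C_0(E^0)$, and for each such $w$ we have $w\in E^0_{\rg}$, so $wE^1$ is finite, whence $\{f\in E^1v : r(f)=w\}\subseteq wE^1$ is finite. Thus $\psi^v(a)$ is a norm-limit of finite-rank diagonal operators, so it is compact; this is the one place the hypothesis $V\subseteq E^0_{\rg}$ is used.

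For the unitary equivalence I would first treat $|\nn_v|=1$ and then ampliate by $\id_{\CC^{|\nn_v|}}$ (the case $\nn_v=0$ being trivial, both modules being zero). Unpacking the subscript, $\eps_v$ makes $\CC$ the left Hilbert $C_0(E^0)$-module on which $a$ acts by the scalar $a(v)$, so $X(E)\hatimes_{\eps_v}\CC$ is a Hilbert space with $\langle\xi\hatimes\lambda,\eta\hatimes\mu\rangle=\overline{\lambda}\mu\sum_{f\in E^1v}\overline{\xi(f)}\eta(f)$. The assignment $1_f\hatimes 1\mapsto e^f$ for $f\in E^1v$ (and $1_f\hatimes 1\mapsto 0$ for $f\notin E^1v$, these vectors having zero norm) then extends to a unitary $U:X(E)\hatimes_{\eps_v}\CC\to\ell^2(E^1v)$, being isometric on the dense span of the $1_f\hatimes 1$ by the inner-product formula and surjective onto the span of the $e^f$. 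It then remains to check that $U\hatimes\id_{\CC^{|\nn_v|}}$, composed with the canonical identification of $(X(E)\hatimes_{\eps_v}\CC)\hatimes\CC^{|\nn_v|}$ with $X(E)\hatimes_{\eps_v}\CC^{|\nn_v|}$, intertwines the remaining data: the Fredholm operators are both $0$; the grading $\alpha_X\hatimes\sign(\nn_v)\id$ is carried to $\beta\hatimes\sign(\nn_v)\id$ since $\alpha_X(1_f)=(-1)^{\delta(f)}1_f$ corresponds to $\beta(e^f)=(-1)^{\delta(f)}e^f$; and $\tilde\phi_V$ is carried to $\tilde\psi$ since $\tilde\phi_V(a)$ sends $1_f\hatimes\zeta$ to $a(r(f))\,1_f\hatimes\zeta$ when $r(f)\in V$ and to $0$ otherwise, which is exactly the description of $\psi^v(a)\hatimes\id$ transported along $U$.

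I do not expect a genuine obstacle: the content is bookkeeping. The two points deserving care are the convention that $E^1v$ denotes the set of edges emitted by $v$, so that the coefficient $r(f)$ appearing in $\psi^v$ ranges over the vertices $v$ points into; and the fact that the left action of the target correspondence is the $V$-truncated action $\tilde\phi_V$ — the one factoring through the restriction $C_0(E^0)\to C_0(V)$ followed by $\phi_V$ — rather than the full action of $C_0(E^0)$ on $X(E)$. It is exactly this truncation, forced on us because $\phi_V$ is only defined on $C_0(V)$, that makes the two Kasparov modules coincide and that mirrors the ``$r(f)\in V$'' clause in the definition of $\psi^v$.
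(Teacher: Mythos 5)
Your proposal is correct and follows essentially the same route as the paper: compactness of $\psi^v(a)$ is obtained by writing it as a norm-limit of finite-rank diagonal pieces using $a\in C_0(E^0)$ and the finiteness of $wE^1$ for $w\in V\subseteq E^0_{\rg}$, and the unitary is the obvious identification $1_f\hatimes e^j\leftrightarrow e^f\hatimes e^j$ on $E^1v$ (with vectors supported off $E^1v$ killed by the balancing over $\eps_v$), checked to intertwine the truncated left actions and the gradings. The only cosmetic differences are that you build the unitary in the opposite direction and ampliate from the case $|\nn_v|=1$, whereas the paper writes it directly on basis tensors.
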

\begin{proof}
Throughout the proof, we write $\psi$ for $\psi^v$. For each $w \in V$, the operator
$\psi(\delta_w) = \sum_{f \in wE^1} \Theta_{e^f, e^f}$ is compact, and for $w \in E^0 \setminus
V$, we have $\psi(\delta_w) = 0$. So each $\psi(a) = \bigoplus_{w \in V} a(w)\psi(\delta_w)$ is
compact.

It is immediate that $\beta$ is a grading, and it preserves the grading of the left action since
$C_0(E^0)$ carries the trivial grading. So $(\ell^2(E^1v),\psi,0,\beta)$ is a Kasparov
$C_0(V)$--$\CC$-module.

Recall that for an edge $f\in E^1v$ the element $\delta_f\in X(E)$ denotes the point mass at $f$.
Further for $j\leq|\nn_v|$ let $e^j$ be an orthonormal basis for $\CC^{|\nn_v|}$. We claim there
is a unitary equivalence $U:\ell^2(E^1v)\hatimes\CC^{|\nn_v|}\to
X(E)\hatimes_{\eps_v}\CC^{|\nn_v|}$ that acts on elementary basis tensors $e^f\hatimes e^j$ by the
formula
\[
U(e^f\hatimes e^j)=\delta_f\hatimes e^j.
\]
An elementary calculation shows that this formula preserves inner-products, so extends to a well
defined isometry. To see that $U$ is surjective, note that any function $x\in X(E)$ that is zero
on $E^1v$ satisfies $x\hatimes w=0$ for any $w\in\CC^{|\nn_v|}$. Thus it suffices to consider
functions contained in the span of $\{\delta_f:f\in E^1v\}=\overline{C_c(E^1v)}$, and tensors
$\delta^f\hatimes e^j$ can be written in the form $U(e^f\hatimes e^j)$ by construction.

The definitions of $\phi_V$ and $\psi^v$ show that $U$ intertwines the left actions. To see that
it preserves gradings, fix $f\in E^1$ and $j\leq|\nn_v|$ and compute
\begin{align*}
U\big((\beta\hatimes\sign(\nn_v)\id)(e^f\otimes e^j)\big)&=\sign(\nn_v)(-1)^{\delta(f)}U(e^f\otimes e^j)\\
&=\sign(\nn_v)(-1)^{\delta(f)}\delta^f\hatimes e^j\\
&=(\alpha_X\hatimes\sign(\nn_v)\id)(\delta^f\hatimes e^j).\qedhere
\end{align*}
\end{proof}

\begin{prp}\label{prp: Jono thesis main}
Let $E$ be a directed graph. Fix a subset $V \subseteq E^0_{\rg}$ and a function $\delta : E^1 \to
\{0,1\}$. Let $A^\delta_V$ denote the $V \times E^0$ matrix such that $A^\delta_V(v,w) =
\sum_{e \in vE^1 w} (-1)^{\delta(e)}$ for all $v \in V$ and $w \in E^0$. Let $[X_{E, V}] =
[X(E),\phi|_{C_0(V)},0,\alpha_X]$ be the Kasparov class of the graded graph module with left
action restricted to $C_0(V)$. Let $\mu$ denote both the isomorphism $KK_0(C_0(E^0), \CC) \to
\ZZ^{E^0}$ and the isomorphism $KK_0(C_0(V), \CC) \to \ZZ^V$ from Proposition~\ref{Prop: Khom
direct sum iso}. Let $[\iota] \in KK(J_X, A)$ be the class of the inclusion, and let $\pi :
\ZZ^{E^0} \to \ZZ^{E^0_{\rg}}$ be the projection map. Then the following diagrams commute.
\[
	\begin{tikzpicture}[yscale=0.45, >=stealth]
	\node (00) at (0,0) {$KK_0(C_0(V), \CC)$};
	\node (40) at (4.25,0) {$KK_0(C_0(E^0), \CC)$};
	\node (42) at (4.25,3) {$\ZZ^{E^0}$};
	\node (02) at (0,3) {$\ZZ^V$};
	\draw[->] (42)-- node[above] {${\scriptstyle{A^\delta_V}}$} (02);
	\draw[->] (40)-- node[below] {${\scriptstyle{[X_{E, V}]\hatimes}}$} (00);
	\draw[->] (00)-- node[left] {${\scriptstyle{\mu}}$} (02);
	\draw[->] (40)-- node[right] {${\scriptstyle{\mu}}$} (42);
	\node (70) at (8.25,0) {$KK_0(C_0(V), \CC)$};
	\node (120) at (12.5,0) {$KK_0(C_0(E^0), \CC)$};
	\node (122) at (12.5,3) {$\ZZ^{E^0}$};
	\node (72) at (8.25,3) {$\ZZ^V$};
	\draw[->] (122)-- node[above] {$\scriptstyle\pi$} (72);
	\draw[->] (120)-- node[below] {${\scriptstyle{[\iota]\hatimes}}$} (70);
	\draw[->] (70)-- node[left] {${\scriptstyle{\mu}}$} (72);
	\draw[->] (120)-- node[right] {${\scriptstyle{\mu}}$} (122);
	\end{tikzpicture}
\]
\end{prp}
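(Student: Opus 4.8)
The strategy is to verify each square by computing the Kasparov product $[X_{E,V}] \hatimes (\cdot)$ (respectively $[\iota] \hatimes (\cdot)$) on the generators of $KK_0(C_0(E^0), \CC)$ described by Proposition~\ref{Prop: Khom direct sum iso}, and then chase the result through $\mu$. The two squares are essentially parallel, so I would treat the left one (involving $A^\delta_V$) in detail and remark that the right one (involving $\pi$) is the same argument with the left action $\phi|_{C_0(V)}$ replaced by the inclusion $\iota : C_0(V) \hookrightarrow C_0(E^0)$.

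\textbf{Step 1 (reduce to row-finite $E$).} Proposition~\ref{Prop: Khom direct sum iso} and Lemma~\ref{Lemma: techLemUnitaryEquiv} are stated for row-finite graphs, but $X(E)$ restricted to a left action by $C_0(V)$ with $V \subseteq E^0_{\rg}$ only ``sees'' the edges $f$ with $r(f) \in V$, and each such $r(f)$ receives finitely many edges. So I would first observe that the submodule $X(E) \cdot C_0(V)$ is isomorphic to $\bigoplus_{v \in V} \ell^2(E^1 v)$ as a $C_0(E^0)$--$\CC$-correspondence once we tensor down by an evaluation (this is exactly the content of Lemma~\ref{Lemma: techLemUnitaryEquiv}), so the row-finiteness hypothesis is harmless here and I can invoke these preliminary results directly.

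\textbf{Step 2 (compute the product on generators).} Fix $\nn \in \ZZ^{E^0}$, so $\mu(\nn) = [\bigoplus_{w \in E^0} \CC^{|\nn_w|}, \ell, 0, \bigoplus_w \sign(\nn_w)]$. I need to identify $[X_{E,V}] \hatimes_{C_0(E^0)} \mu(\nn) \in KK_0(C_0(V), \CC)$. The balanced tensor product $X(E) \hatimes_{C_0(E^0)} (\bigoplus_w \CC^{|\nn_w|})$ decomposes, using the evaluations $\eps_v$ at $v \in E^0$, as $\bigoplus_{v} (X(E) \hatimes_{\eps_v} \CC^{|\nn_v|})$ with the left action of $C_0(V)$; and by Lemma~\ref{Lemma: techLemUnitaryEquiv} each summand is unitarily equivalent to $(\ell^2(E^1 v) \hatimes \CC^{|\nn_v|}, \tilde\psi^v, 0, \beta \hatimes \sign(\nn_v)\id)$. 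Now $\ell^2(E^1 v) \hatimes \CC^{|\nn_v|}$ has dimension $|E^1 v| \cdot |\nn_v|$, with the left action of $C_0(V)$ being $a \mapsto \bigoplus_{f \in E^1 v,\, r(f) \in V} a(r(f)) \id_{\CC^{|\nn_v|}}$, and the grading on the $(f,j)$-slot is $(-1)^{\delta(f)} \sign(\nn_v)$. Regrouping the basis vectors by the value $u = r(f) \in V$ rather than by $v = s(f)$, the $u$-isotypic part of this Kasparov module is a direct sum, over all edges $f$ with $r(f) = u$, of $|\nn_{s(f)}|$ copies of $\CC$ with grading sign $(-1)^{\delta(f)}\sign(\nn_{s(f)})$. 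So, under $\mu : KK_0(C_0(V), \CC) \to \ZZ^V$, this class is sent to the function
\[
u \;\longmapsto\; \sum_{f \in uE^1} (-1)^{\delta(f)}\,\sign(\nn_{s(f)})\,|\nn_{s(f)}|
\;=\; \sum_{f \in uE^1} (-1)^{\delta(f)}\,\nn_{s(f)}
\;=\; \sum_{w \in E^0} A^\delta_V(u,w)\,\nn_w,
\]
which is precisely $(A^\delta_V \nn)(u)$. This is exactly commutativity of the left square; the main care needed is tracking signs through the graded direct-sum decomposition and confirming that $\sign(k)|k| = k$ so that the $\sign$-grading data combines with the dimensions to recover the signed integer entries of $\nn$.

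\textbf{Step 3 (the right square).} Running the identical computation with $X_{E,V}$ replaced by the class $[\iota]$ of the inclusion $C_0(V) \hookrightarrow C_0(E^0)$: here $\iota \hatimes (\bigoplus_w \CC^{|\nn_w|})$ is just $\bigoplus_{v \in V} \CC^{|\nn_v|}$ with its sign grading and the coordinatewise action of $C_0(V)$, which $\mu$ sends to the restriction $v \mapsto \nn_v$, i.e. to $\pi(\nn)$. So the right square commutes as well.

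\textbf{Main obstacle.} The only genuinely delicate point is Step~2: making the regrouping ``by range vertex instead of source vertex'' completely rigorous as an equality of \emph{graded} Kasparov classes, i.e. exhibiting the explicit unitary that reorganizes $\bigoplus_{v}\big(\ell^2(E^1 v) \hatimes \CC^{|\nn_v|}\big)$ into $\bigoplus_{u \in V} \big(\bigoplus_{f \in uE^1} \CC^{|\nn_{s(f)}|}\big)$ and checking it intertwines both the $C_0(V)$-actions and the gradings. Everything else (the decomposition of balanced tensor products over $C_0(E^0)$, naturality of $\mu$, compactness of the operators so that the Fredholm operator $0$ is legitimate, bilinearity of the Kasparov product over the generators $\nn$) is routine, but I would state this unitary carefully since the whole computation hinges on it.
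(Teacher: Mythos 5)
Your proposal is correct and follows essentially the same route as the paper: both verify the squares on the generators $\mu(\nn)$, use Lemma~\ref{Lemma: techLemUnitaryEquiv} to identify $X(E)\hatimes_{\eps_v}\CC^{|\nn_v|}$ with $\ell^2(E^1v)\hatimes\CC^{|\nn_v|}$, apply the coordinate maps $g_w^*$ from Theorem~19.7.1 of \cite{B}, pass to the essential submodules $\ell^2(wE^1v)$, and read off the signed graded dimensions to recover $A^\delta_V\nn$; the second square is likewise immediate from the definition of $\mu$ in both treatments. Your regrouping ``by range vertex'' is just an explicit description of what applying $\theta=\prod_{w}g_w^*$ and restricting to essential submodules accomplishes in the paper's proof.
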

\begin{proof}
By surjectivity of the isomorphism $\mu$ each element of $KK_0(C_0(E^0),\CC)$ is of the form
\[
\mu(\nn)=\Big[\bigoplus_{v\in E^0}\CC^{|\nn_v|},\ell,0,\bigoplus_{v\in E^0}\sign(\nn_v)\Big]
\]
for some $\nn\in\ZZ^{E^0}$. In what follows, we write $\phi_V$ for $\phi|_{C_0(V)}$. We compute
using Lemma~\ref{Lemma: techLemUnitaryEquiv}
\begin{align*}
	[X(E),\phi_V,0,\alpha_X]\hatimes\mu(\nn)&=\Big[\bigoplus_{v\in E^0}(X(E)\hatimes\CC^{|\nn_v|}),\bigoplus_{v\in E^0}\phi\hatimes 1,0,\bigoplus_{v\in E^0}(\alpha_X\hatimes\sign(\nn_v))\Big]\\
	&=\Big[\bigoplus_{v\in E^0}\ell^2(E^1v)\hatimes\CC^{|\nn_v|},\bigoplus_{v\in E^0}\psi^v\hatimes 1,0,\bigoplus_{v\in E^0}(\beta\hatimes\sign(\nn_v))\Big].
\end{align*}
Let $g_v:\CC\to C_0(E^0)$ be the coordinate inclusion corresponding to the vertex $v\in E^0$.
Theorem~19.7.1 of \cite{B} gives an isomorphism $\theta:=\prod_{v\in
E^0}g_v^*:KK(C_0(E^0),\CC)\to\prod_{v\in E^0}KK(\CC,\CC)\cong \ZZ^{E^0}$. Applying $\theta$ to
$[X_{E, V}]\hatimes\mu(\nn)$ gives
\begin{align}
	\theta\Big(\Big[\bigoplus_{v\in E^0}&\ell^2(E^1v)\hatimes\CC^{|\nn_v|},\bigoplus_{v\in E^0}\psi^v\hatimes 1,0,\bigoplus_{v\in E^0}(\beta\hatimes\sign(\nn_v))\Big]\Big)\nonumber\\
	&=\Big(\Big[\bigoplus_{v\in E^0}\ell^2(E^1v)\hatimes\CC^{|\nn_v|},\Big(\bigoplus_{v\in E^0}\psi^v\hatimes 1\Big)\circ g_w,0,\bigoplus_{v\in E^0}(\beta\hatimes\sign(\nn_v))\Big]_{w}\Big)_{w\in E^0}.\label{eq: thetaModule}
\end{align}
Since the action $(\psi\hatimes 1)\circ g_w$ is zero on the submodule $\ell^2(E^1v\setminus
wE^1v)\hatimes \CC^{|\nn_v|}$ for each $v,w\in E^0$, Equation~\eqref{eq: thetaModule} becomes
\begin{align}
	\Big(\Big[\bigoplus_{v\in E^0}&\ell^2(E^1v)\hatimes\CC^{|\nn_v|},\Big(\bigoplus_{v\in E^0}\psi^v\hatimes 1\Big)\circ g_w,0,\bigoplus_{v\in E^0}(\beta\hatimes\sign(\nn_v))\Big]_{w}\Big)_{w\in E^0}\nonumber\\
	&=\Big(\Big[\bigoplus_{v\in E^0}\ell^2(wE^1v)\hatimes\CC^{|\nn_v|},\Big(\bigoplus_{v\in E^0}\psi^v\hatimes 1\Big)\circ g_w,0,\bigoplus_{v\in E^0}(\beta\hatimes\sign(\nn_v))\Big]_{w}\Big)_{w\in E^0}\nonumber\\
	&=\Big(\Big[\bigoplus_{v\in E^0}\CC^{|A^\delta(w,v)\nn_v|},\Big(\bigoplus_{v\in E^0}\psi^v\hatimes 1\Big)\circ g_w,0,\bigoplus_{v\in E^0}(\sign(A^\delta(w,v)\nn_v))\Big]_{w}\Big)_{w\in E^0}.\label{eq: finiteOplus}
\end{align}
Since $\psi^v(\delta_w)$ is zero when $w \not\in V$, we may pass to the essential submodule, so
that~\eqref{eq: finiteOplus} becomes
\begin{align*}
	\Big(\Big[\bigoplus_{v\in E^0}&\CC^{|A^\delta(w,v)\nn_v|},\Big(\bigoplus_{v\in E^0}\psi\hatimes 1\Big)\circ g_w,0,\bigoplus_{v\in E^0}(\sign(A^\delta(w,v)\nn_v))\Big]_{w}\Big)_{w\in V}\\
	&=\Big(\Big[\CC^{|\sum_{v\in E^0}A^\delta(w,v)\nn_v|},\cdot, 0,\sign\Big(\sum_{v\in E^0}A^\delta(w,v)\nn_v\Big)\Big]_{w}\Big)_{w\in V}.
\end{align*}
This is exactly the representative of $A^\delta_V\nn \in \ZZ^{E^0}$ as a module in
$KK_0(\CC,\CC)^V$. Hence $[X_{E, V}]\hatimes\mu(\nn)=\mu(A^\delta_V\nn)$ as required.

The final statement follows directly from the definition of $\mu$.
\end{proof}

\begin{proof}[Proof of Theorem~\ref{thm2}]
For the bimodule $X(E)$, the coefficient algebra $A$ is $C_0(E^0) = \bigoplus_{v \in E^0} \CC$ and
the ideal $I \subseteq C$ is $C_0(V) = \bigoplus_{v \in V} \CC$. Countable additivity of
$K$-theory (or Proposition~\ref{prp:KK(C,A) countable additivity}) shows that $KK_0(\CC, C_0(E^0)) \cong \ZZ E^0$ and $KK_0(\CC, I) \cong \ZZ V$. The
argument of \cite[Lemma~8.2]{KPS6} shows that these isomorphisms intertwine the map $\cdot
\hatimes [{_I X(E)}]$ from $KK_0(\CC, I)$ to $KK(\CC, A)$ with the map $(A^\delta_V)^t : \ZZ V \to
\ZZ E^0$. Functoriality of $KK(\CC, \cdot)$ shows that these isomorphisms also intertwine
$[\iota]$ with the inclusion map $\iota : \ZZ V \to \ZZ E^0$. So the exact sequence of
Theorem~\ref{Theorem: First half of main thm A} induces the exact sequence
\[
0 \to \KTgr{1}(C^*(E; V), \alpha) \to \ZZ V \xrightarrow{1 - (A^\delta_V)^t} \ZZ E^0 \to \KTgr{0}(C^*(E; V), \alpha) \to 0,
\]
and the first part of the theorem follows.

For the second statement, first note that Proposition~\ref{Prop: Khom direct sum iso} and
Theorem~19.7.1 of \cite{B} give isomorphisms $KK_0(A, \CC) \cong \ZZ^{E^0}$ and
$KK_0(I, \CC) \cong \ZZ^V$, and show that $KK_1(A, \CC) = KK(J_X, \CC) = \{0\}$.
Proposition~\ref{prp: Jono thesis main} shows that these isomorphisms intertwine $(1 - [{_IX(E)}])
\hatimes \cdot$ with $(1 - \tilde{A}^\delta_V)$. So the exact sequence of Theorem~\ref{Theorem:
Second half of main thm A} gives the exact sequence
\[
0 \to \KHgr{0}(C^*(E; V), \alpha) \to \ZZ^{E^0} \xrightarrow{1 - A^\delta_{V}} \ZZ^V \to \KHgr{1}(C^*(E; V), \alpha) \to 0.\qedhere
\]
\end{proof}

\section{Adding tails to graded correspondences}\label{sec:adding tails}

Inspired by the technique of `adding tails' to directed graphs which transforms a directed graph
into a graph without sources with a Morita equivalent $C^*$-algebra, Muhly and Tomforde proved
that given an $A$--$A$ correspondence $X$ with non-injective left action, there is a
correspondence $Y$ over a $C^*$-algebra $B$ such that the left action of $B$ on $Y$ is implemented
by an injective homomorphism, and $\Oo_X$ is a full corner of $\Oo_Y$. If $A$ and $X$ are graded,
these gradings extend naturally to gradings on $B$ and $Y$, and the inclusion of $\Oo_X$ in
$\Oo_Y$ is graded. In particular $(\Oo_X, \alpha_X)$ and $(\Oo_Y, \alpha_Y)$ are $KK$-equivalent,
and in particular have isomorphic graded $K$-theory and $K$-homology.


In this section we show how adding tails to a correspondence has applications in $KK$-theory. More
specifically we show how to recover Pimsner's 6-term exact sequences for $KK(\cdot, B)$ and
$KK(\CC, \cdot)$ for any graded countably generated essential $A$--$A$-correspondence $X$ using
\emph{only} the corresponding sequences for graded correspondences with an injective left action
(these were established in the first author's honours thesis \cite{PattersonHons}). This yields
the calculations of graded $K$-theory and $K$-homology of $C^*$-algebras of arbitrary graphs that
first appeared in the first author's honours thesis \cite{PattersonHons} and the second author's
honours thesis \cite{TaylorHons} respectively (see Corollary~\ref{cor.graph}). It also provides a
useful reality check for our more general results in the preceding sections; we thank Ralf Meyer
for pointing us to the direct proof of Pimsner's sequences for modules with non-injective left
actions employed there. To exploit the adding-tails technique, we need to know that graded
$K$-theory and $K$-homology are each countably additive in the appropriate sense. We provide
details below.

\subsection{Countable additivity}\label{sec:additive}

In what follows, a direct sum of graded $C^*$-algebras is endowed with the natural direct-sum
grading; so given a Kasparov class $[X] = [X, \phi, F, \alpha_X] \in KK(\bigoplus_i A_i, B)$, for each $n$, the
inclusion map $\iota_{A_n} : A_n \hookrightarrow \bigoplus_i A_i$ is graded and induces the class
$\iota_{A_n}^*[X]\in KK(A_n,B)$.

Theorem~19.7.1 of \cite{B} shows that for separable $B$, $KK(\,\cdot\,, B)$ is countably additive
in the following sense. If $A=\bigoplus_{n=1}^\infty A_n$ is a $c_0$-direct sum of separable
$C^*$-algebras $A_n$, then there is an isomorphism $\overline{\zeta} : KK(A, B) \to
\prod^\infty_{n=1} KK(A_n, B)$ that carries the class of a Kasparov $A$--$B$-module $(X, \phi, F,
\alpha_X)$ to the sequence whose $n$th term is the class of $(X, \phi\circ\iota_{A_n}, F,
\alpha_X)$; that is, in the notation of Section~\ref{sec:KKBasics}, writing $[X]$ for the class of
$(X, \phi, F, \alpha_X)$, we have
\[
\overline{\zeta}[X] = \big(\iota_{A_n}^*[X]\big)^\infty_{n=1}.
\]
In particular, identifying $KK(A,\CC)$ with $\KHgr0(A)$, we obtain countable additivity of graded
$K$-homology.

On the other hand,
as discussed in \cite[19.7.2]{B}, the map $KK(B, \,\cdot\,)$  is typically not countably additive. There is a natural map $\omega$ from $\bigoplus^\infty_{i=1} KK(A, B_i)$
to $KK(A, \bigoplus^\infty_{i=1} B_i)$ such that
\begin{align}
\label{omegamap}
\textstyle \omega(([X_i, \phi_i, F_i, \alpha_i])^n_{i=1})=[\bigoplus^n_{i=1} X_i, \bigoplus^n_{i=1} \phi_i, \bigoplus^n_{i=1} F_i, \bigoplus^n_{i=1}
\alpha_i].
\end{align}
This $\omega$ is always injective (see the proof of Proposition~\ref{prp:KK(C,A) countable additivity} below), but is typically not surjective.
Since
our focus is on graded $K$-theory and graded $K$-homology, we content ourselves with recording the
presumably well-known fact that $KK(\CC, \,\cdot\,)$ is countably additive. For ungraded
$C^*$-algebras, this follows from countable
additivity of $K$-theory since $KK(\CC,A)\cong K_0(A)$.

\begin{prp}\label{prp:KK(C,A) countable additivity}
Let $(A_i, \alpha_i)^\infty_{i=1}$ be a sequence of $\sigma$-unital graded $C^*$-algebras. Then
the map $\omega : \bigoplus^\infty_{i=1} KK(\CC, A_i) \to KK(\CC, \bigoplus^\infty_{i=1} A_i)$ defined at \eqref{omegamap} is
an isomorphism.
\end{prp}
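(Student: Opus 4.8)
The plan is to show $\omega$ is bijective by separately verifying injectivity and surjectivity, using the identification $KK(\CC, A) \cong K_0(A)$ together with countable additivity of $K_0$, but being careful that the gradings interact correctly. First I would set $A := \bigoplus_{i=1}^\infty A_i$, with the direct-sum grading $\alpha := \bigoplus_i \alpha_i$, and write $\iota_{A_n} : A_n \hookrightarrow A$ for the (graded) inclusions. The key structural fact I would use is that a graded $C^*$-algebra $(B, \beta)$ is the same thing as a $C^*$-algebra carrying an action of $\ZZ_2$, so $(B,\beta) \mapsto B \rtimes \ZZ_2$ (equivalently, the crossed product by the grading) converts $KK(\CC, B)$ with its grading into ordinary $KK$, via the standard fact (see \cite[Section~20.2]{B} and the Packer--Raeburn stabilisation picture) that graded $KK(\CC, B)$ is isomorphic to $KK(\CC, \Cliff{1} \hatimes B)$ or, more usefully here, to a summand of $K_*$ of the associated crossed product. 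Since taking crossed products by $\ZZ_2$ commutes with $c_0$-direct sums, and $K_0$ is countably additive, this would give the isomorphism almost formally.

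\textbf{Key steps, in order.} (1) Record the injectivity of $\omega$: if $\omega\big(([X_i,\phi_i,F_i,\alpha_i])_{i=1}^n\big) = 0$ for all $n$ compatibly, then applying $\iota_{A_i}^*$ recovers each summand $[X_i,\phi_i,F_i,\alpha_i]$, so injectivity is immediate from functoriality; this is the parenthetical claim already made in the excerpt and needs only a line. (2) For surjectivity, take an arbitrary class $x = [X,\phi,F,\alpha_X] \in KK(\CC, A)$. Pass to a picture of $x$ where the Kasparov module is ``small'': using that $\CC$ is unital and the standard Kasparov-module regularisation, one may assume $\phi(1) = \id$, $F = F^*$, $F^2 = 1$ (Kasparov's technical lemmas, \cite[Proposition~17.4.3]{B}), so that $x$ is represented by a graded Hilbert $A$-module $X$ with an odd self-adjoint unitary $F$ — i.e.\ by a pair of projections, or equivalently a class in $K_0$ of a suitable matrix algebra over $A$. (3) Invoke countable additivity of $K$-theory: $K_0(A) \cong \prod_{i=1}^\infty K_0(A_i)$ with the caveat that a finitely-generated projective (or formal difference thereof) over $A = \bigoplus A_i$ is ``supported'' on finitely many coordinates up to the relevant equivalence; combined with the graded refinement, $x$ is a sum $\sum_{i=1}^N \iota_{A_i *}(x_i)$ with $x_i \in KK(\CC, A_i)$, which exhibits $x$ as $\omega\big((x_i)_{i=1}^N\big)$. (4) Check the gradings are respected throughout: the direct-sum grading on $A$ restricts to $\alpha_i$ on $A_i$, and the decomposition of a homogeneous self-adjoint unitary $F$ into its block-diagonal coordinate pieces is again homogeneous, so no grading obstruction appears.

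\textbf{Main obstacle.} The delicate point is step (2)--(3): making precise the assertion that a single Kasparov $\CC$--$A$-module over a $c_0$-direct sum ``lives on finitely many summands.'' A Kasparov module over $A = \bigoplus_i A_i$ need not have its underlying Hilbert module be a finite direct sum of modules over the $A_i$; one has $X \cong \bigoplus_i X A_i$ as a $c_0$-direct sum, but $F$ mixes the summands. The clean fix is to use that after regularising to $F = F^*$, $F^2 = \id$, $\phi(1) = \id$, the module $X$ together with the $\ZZ_2$-graded decomposition $X = X^+ \oplus X^-$ given by the $\pm 1$ eigenspaces of $F$ is (via the graded stabilisation theorem, \cite[Theorem~14.6.1]{B} in the graded form) isomorphic to a standard module, reducing $x$ to a difference of classes of projections over $\Kk \hatimes A$, at which point genuine countable additivity of $K_0(\Kk \hatimes A) = K_0(A)$ finishes the job. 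I expect the bulk of the written proof to be the careful bookkeeping that (a) this regularisation can be done compatibly with the grading and (b) the splitting $KK(\CC, \bigoplus A_i) \to \prod KK(\CC, A_i)$ lands in $\bigoplus$ rather than $\prod$, which is exactly where unitality of $\CC$ (equivalently, that the identity of $\CC$ maps into a ``finitely supported'' projection up to homotopy) is used and where the contrast with the non-unital / non-countably-additive second variable becomes visible.
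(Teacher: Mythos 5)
There is a genuine gap, and it begins with your identification of the ``main obstacle.'' You assert that for a Kasparov $\CC$--$A$-module over $A=\bigoplus_i A_i$ the operator $F$ ``mixes the summands'' $X_i := XA_i=\{\xi : \langle\xi,\xi\rangle\in A_i\}$. It cannot: $F$ is adjointable, hence a right $A$-module map, so $F(XA_i)\subseteq XA_i$, and likewise $\phi(1)$ and $\alpha_X$ preserve each $X_i$. This is exactly the (easy) observation the paper uses to decompose any class as $F=\bigoplus F_i$, $\phi=\bigoplus\phi_i$, $\alpha_X=\bigoplus\alpha_i$, yielding a well-defined element of the \emph{product} $\prod_i KK(\CC,A_i)$. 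The real difficulty -- which your proposal does not engage with -- is showing that this element lies in the \emph{direct sum}, i.e.\ that all but finitely many components $[X_i,\phi_i,F_i,\alpha_i]$ vanish, and that the remaining infinite tail contributes nothing. The paper does this by normalising so that $\phi(1)=1_X$, $F=F^*$, $\|F\|\le 1$, noting that $F^2-1\in\Kk(X)$ forces $\|F_i^2-1\|\to 0$, and then, for large $i$, operator-homotoping $F_i$ through odd functions of itself to an odd self-adjoint unitary commuting with $\phi_i(1)$; such a module is degenerate, and the infinite direct sum of degenerate modules is degenerate, hence zero.

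Your proposed route around this is also not sound as written. In step (2) you claim one may assume $F=F^*$ and $F^2=1$; but a Kasparov $\CC$--$B$-module with $\phi(1)=1$, $F$ odd, self-adjoint, $F^2=1$ and $[F,\phi(a)]^{\operatorname{gr}}=0$ (automatic here, since $\phi(\CC)$ is even and central) is \emph{degenerate}, so if this normalisation were always possible every class in $KK(\CC,B)$ would be zero. Only $F^2-1\in\Kk(X)$ can be arranged in general, and the whole content of the argument is measuring the failure of $F^2=1$ coordinate by coordinate. Separately, your ``key structural fact'' identifying graded $KK(\CC,B)$ with ordinary $K$-theory of $B\rtimes\ZZ_2$ or with $KK(\CC,\Cliff{1}\hatimes B)$ is not correct as stated (the latter computes the \emph{odd} group $KK_1(\CC,B)$, and the crossed-product comparison is a theorem requiring hypotheses, not a formality), so the reduction to countable additivity of $K_0$ is not available by the means you cite. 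The injectivity argument via the coordinate quotient maps $A\to A_i$ is fine and slightly slicker than the paper's (which decomposes homotopies directly), but surjectivity is where the proposition lives, and that part needs to be redone along the lines above.
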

\begin{proof}
Write $A_\infty := \bigoplus_i A_i$. Let $(X, \phi, F, \alpha_X)$ be a Kasparov
$\CC$--$A_\infty$-module. Since each $A_i$ is an ideal of $A_\infty$, each $X_i := \{\xi : \langle
\xi, \xi\rangle \in A_i\}$ is a right-Hilbert $A_i$-module. We identify $X$ and $\bigoplus_i X_{i}$ as
right-Hilbert $A_\infty$-modules, so $X=\clsp\{x_i\in X_i: i\geq 1\}$ and $\langle x_i,
x_j\rangle_A = 0$ for all $i\neq j$. Since  $F$ and $\phi(1)$ are adjointable, they leave each
$X_i$ invariant ($FX_i\subseteq X_i$ and $\phi(1)X_i\subseteq X_i$); so $F$ and $\phi$ decompose
as $F = \bigoplus F_i$ and $\phi = \bigoplus \phi_i$. Since the inclusion map
$\iota_{A_i}:A_i\hookrightarrow A_\infty$ is graded and $X_i =X\cdot A_i$, the automorphism
$\alpha_X$ also leaves each $X_i$ invariant; so $\alpha_X$ decomposes as a direct sum $\alpha_X = \bigoplus
\alpha_i$. Since each compact operator on $X=\bigoplus_i X_i$ restricts to a compact operator on
$X_i$, each $(X_i, \phi_i, F_i, \alpha_i)$ is a Kasparov $\CC$--$A_i$-module.

Applying the preceding paragraph with $A_\infty$ replaced by $C([0,1], A_\infty)$ shows that any
homotopy of Kasparov $\CC$--$A_\infty$ modules decomposes as a direct sum of homotopies of
Kasparov $\CC$--$A_i$-modules, and so $\omega$ is injective.

To show that $\omega$ is surjective, we claim that it suffices to show that if $(X, \phi, F,
\alpha)$ is a Kasparov $\CC$--$A_\infty$-module, then there exists $N$ such that $[X_i, \phi_i,
F_i, \alpha_i] = 0_{KK(\CC, A_i)}$ for all $i \ge N$. To see why, suppose that $[X_i, \phi_i, F_i,
\alpha_i] = 0_{KK(\CC, A_i)}$ for all $i \ge N$. Let
\[
[X^\infty_N] := \Big[0^{N-1} \oplus \bigoplus^\infty_{i=N} X_i, 0^{N-1} \oplus \bigoplus^\infty_{i=N} \phi_i, 0^{N-1} \oplus \bigoplus^\infty_{i=N} F_i, 0^{N-1} \oplus \bigoplus^\infty_{i=N} \alpha_i\Big]
    \in KK(\CC, A_\infty).
\]
We have
\[ \textstyle
    [X, \phi, F, \alpha] = \omega\big(\bigoplus^{N-1}_{i=1}[X_i, \phi_i, F_i, \alpha_i]\big) + [X^\infty_N].
\]
Since, for each $i \ge N$, the module $(X_i, \phi_i, F_i, \alpha_i)$ is a degenerate Kasparov
module, so is $\big(\bigoplus^\infty_{i=N} X_i, \bigoplus^\infty_{i=N} \phi_i,
\bigoplus^\infty_{i=N} F_i, \bigoplus^\infty_{i=N} \alpha_i\big)$. So $[X^\infty_N] = 0_{KK(\CC,
A_\infty)}$, and it follows that $[X, \phi, F, \alpha] = \omega(\bigoplus^{N-1}_{i=1}[X_i, \phi_i,
F_i, \alpha_i])$ belongs to the range of $\omega$.

To see that there exists $N$ such that $[X_i, \phi_i, F_i, \alpha_i] = 0_{KK(\CC, A_i)}$ for all
$i \ge N$, first note that by \cite[Propositions 17.4.2~and~18.3.6]{B}, we may assume that
$\phi(1) = 1_X$,  that $F = F^*$ and that $\|F\| \le 1$. Using that $\Kk(X)=\clsp\{\Theta_{\xi,
\eta}: \xi, \eta\in \bigoplus_{i=1}^n X_i, n\in \NN\}$ it follows that if $T \in \Kk(X)$, then
$\|T|_{X_i}\| \to 0$. Since $\phi_i(1)=1_{X_i}$ and $F^2 - 1 = (F^2 - 1)\phi(1) \in \Kk(X)$, we
deduce that $\|F_i^2 - 1\| \to 0$. So there exists $N$ large enough so that $F_i^2$ is invertible
for $i \ge N$. Fix $i \ge N$; we will show that $(X_i, \phi_i, F_i, \alpha_i)$ is degenerate.
Since $F_i^* = F_i$ we see that $F_i$ is normal, and so $\sigma(F_i^2) = \sigma(F_i)^2$, and in
particular, $F_i$ is invertible. Since $\|F_i\| \le 1$, we have $\sigma(F_i) \subseteq [-1,0) \cup
(0,1]$, so for $t \in [0,1]$ there is a continuous function $f_t \in C(\sigma(F_i))$ given by
\[
f_t(x) = \begin{cases}
        (1-t)x + t &\text{ if $x > 0$}\\
        (1-t)x - t &\text{ if $x < 0$.}
    \end{cases}
\]
Now the path $(F_t)_{t \in [0,1]}$ is a continous path from $F_i$ to $f_1(F_i)$. Since $\sigma(f_1(F_i)))=f_1(\sigma(F_i))=\{-1,1\}$, we have $f_1(F_i)^2 = 1$.

We claim that for each $t$, the tuple $(X_i, \phi_i, f_t(F_i), \alpha_i)$ is a Kasparov module. To
see this, first note that each $f_t(F_i)^* = f_t(F_i)$. Since $\phi_i(\CC) = \lsp 1_{X_i}$ is
even-graded and central, we have $[\phi_i(a), f_t(F_i)]^{\operatorname{gr}} = a[\phi_i(1),
f_t(F_i)] = 0$ for all $t,a$. Since $F_i$ is odd with respect to the grading on $\Ll(X_i)$, so is
$F_i^{2n+1}$ for every $n \ge 0$. So writing $P_{\operatorname{odd}}$ for the space $\big\{z
\mapsto \sum^N_{n=0} a_n z^{2n+1} \mid N \ge 0\text{ and }a_i \in \CC\big\}$ of odd polynomials,
$f(F_i)$ is odd for each $f \in P_{\operatorname{odd}}$. Since the $f_t$ are all odd
functions, they can be uniformly approximated by elements of $P_{\operatorname{odd}}$, and we
deduce that each $f_t(F_i)$ is odd with respect to $\tilde\alpha_i$. So to prove the claim, it
remains to show that each $f_t(F_i)^2-1 \in \Kk(X_i)$. For this, observe that the
functional-calculus isomorphism for $F_i$ carries $F_i^2 - 1$ to the function $z \mapsto z^2 - 1$.
Since this function vanishes only at $1$ and $-1$, the ideal of $C^*(F_i)$ generated by $F_i^2 -
1$ is $\{f(F_i) : f(1) = f(-1) = 0, f\in C(\sigma(F_i))\}$. Since each $f_t^2(1) = 1 = f_t^2(-1)$, we deduce that each
$f_t(F_i)^2 - 1$ belongs to the ideal generated by $F_i^2 - 1$, and since $F_i^2 - 1 \in \Kk(X_i)$
it follows that each $f_t(F_i)^2 - 1 \in \Kk(X_i)$. This proves the claim.

Hence $(X_i, \phi_i, f_t(F_i), \alpha_i)_{t \in [0,1]}$ is an operator homotopy, and it follows
that
\[
[X_i, \phi_i, F_i, \alpha_i] = [X_i, \phi_i, f_1(F_i), \alpha_i].
\]
By construction, $f_1(F_i)^2 = 1$, and we already saw that $f_1(F_i)$ is odd, self-adjoint and
commutes with $\phi_i(1)$. So $[X_i, \phi_i, f_1(F_i), \alpha_i] = 0_{KK(\CC, A_i)}$ as required.
\end{proof}

\textbf{Set-up.} \emph{Throughout the remainder of this section we fix a graded, separable,
nuclear $C^*$-algebra $A$ and a graded countably generated essential $A$--$A$-correspondence $X$
with left action implemented by $\varphi : A \to \Ll(X)$.}

\smallskip
Recall that if $I$ is an ideal of $A$, then $I^\perp$ is the ideal $\{b \in A : b I = \{0\}\}$.
Following Katsura, we define $J_X := \varphi^{-1}(\Kk(X))\cap\ker\varphi^\perp \lhd A$. Since $ja
= aj = 0$ for all $j \in J_X$ and $a \in \ker\varphi$, the ideal $J_X + \ker\varphi \subseteq A$
is the internal direct sum $J_X \oplus \ker\varphi$. We sometimes identify it with the external
direct sum via the map $j + a \mapsto (j,a)$.

Since $J_X$ acts compactly on $X$, the quadruple $(X, \varphi|_{J_X}, 0, \alpha_X)$ is a Kasparov
module, and we write $[X]$ for the corresponding class in $KK(J_X, A)$.

We define $K_\varphi^\infty := \bigoplus_{n=1}^\infty \ker\varphi$ as a graded $C^*$-algebra, and $T :=
(K_\varphi^\infty)_{K_\varphi^\infty}$ regarded as an $(A \oplus
K_\varphi^\infty)$--$K_\varphi^\infty$-correspondence with left action $(a,f) \cdot g = (ag_1, f_1
g_2, f_2 g_3, \dots)$.

We define $Y:=X\oplus T$ as a right-Hilbert $A \oplus K_\varphi^\infty$-module, so the right
action of $A \oplus K_\varphi^\infty$ is given by
\[
    (x,f)\cdot (a,g)=(x\cdot a,fg)
\]
and the inner product is given by
\[
\langle (x,f),(y,g)\rangle=(\langle x,y\rangle, f^*g),
\]
for $x,y\in X$, $a\in A$, and $f,g\in T$.

Viewing the left action of $A$ on $X$ as an action of $A \oplus K_\varphi^\infty$ in which the
second coordinate acts trivially, $Y$ is an $(A \oplus K_\varphi^\infty)$--$(A \oplus
K_\varphi^\infty)$-correspondence with left action
\[
    \varphiY(a,f)(x,g) = (\varphi(a)x,ag_1,f_1g_2,\cdots, f_ng_{n+1},\dots).
\]
The homomorphism $\varphiY$ is injective (see \cite[Lemma~4.2]{MT}).

\begin{prp}
\label{ideal.I}
The ideal $(\varphiY)^{-1}(\Kk(Y))$ is equal to $J_X \oplus \ker\varphi \oplus
K_\varphi^\infty \subseteq A \oplus K_\varphi^\infty$.
\end{prp}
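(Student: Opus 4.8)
The plan is to exploit that $\varphiY$ acts ``block-diagonally'' on $Y = X \oplus T$. From the formula $\varphiY(a,f)(x,g) = (\varphi(a)x,\, ag_1,\, f_1g_2,\, f_2g_3,\dots)$ one reads off that the $X$-coordinate of the output depends only on $x$ and the $T$-coordinate only on $g$, so $\varphiY(a,f) = \varphi(a) \oplus S_{(a,f)}$, where $S_{(a,f)} \in \Ll(T)$ is the operator $g \mapsto (ag_1, f_1g_2, f_2g_3, \dots)$. Writing $P$ and $Q = 1-P$ for the projections in $\Ll(Y)$ onto $X$ and $T$, the usual corner identifications give $P\Kk(Y)P = \Kk(X)$ and $Q\Kk(Y)Q = \Kk(T)$, and a block-diagonal operator $T_1 \oplus T_2 = P(T_1\oplus T_2)P + Q(T_1\oplus T_2)Q$ lies in $\Kk(Y)$ if and only if $T_1 \in \Kk(X)$ and $T_2 \in \Kk(T)$. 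Hence $(a,f) \in (\varphiY)^{-1}(\Kk(Y))$ precisely when $a \in \varphi^{-1}(\Kk(X))$ and $S_{(a,f)} \in \Kk(T)$.

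Next I would pin down exactly when $S_{(a,f)} \in \Kk(T)$. Since $T = (K_\varphi^\infty)_{K_\varphi^\infty}$ and $\Theta_{g,h}$ is left multiplication by $gh^*$, the standard identification $\Kk(B_B)\cong B$ shows that $\Kk(T)$ consists precisely of the operators of left multiplication by elements of $K_\varphi^\infty$. Suppose $S_{(a,f)}$ is left multiplication by $h = (h_n)_{n\geq 1} \in K_\varphi^\infty = \bigoplus_{n\geq1}\ker\varphi$. Comparing coordinates against an arbitrary $g \in K_\varphi^\infty$ forces $h_1 g_1 = ag_1$ for all $g_1 \in \ker\varphi$ and $h_{n+1}g_{n+1} = f_n g_{n+1}$ for all $g_{n+1} \in \ker\varphi$ and $n \geq 1$; since $h_{n+1} - f_n \in \ker\varphi$ and $\ker\varphi \cap \ker\varphi^\perp = \{0\}$, the second family gives $h_{n+1} = f_n$, while the first gives $a - h_1 \in \ker\varphi^\perp$ with $h_1 \in \ker\varphi$, so $a \in \ker\varphi + \ker\varphi^\perp$. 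Conversely, given any decomposition $a = k + k'$ with $k \in \ker\varphi$ and $k' \in \ker\varphi^\perp$, the element $h := (k, f_1, f_2, \dots)$ lies in $K_\varphi^\infty$ (its tail is that of $f$), and left multiplication by $h$ equals $S_{(a,f)}$ because $ag_1 = kg_1$ for $g_1 \in \ker\varphi$. Thus $S_{(a,f)} \in \Kk(T)$ if and only if $a \in \ker\varphi + \ker\varphi^\perp$, independently of $f$.

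Combining the two observations, $(\varphiY)^{-1}(\Kk(Y)) = \{(a,f) \in A\oplus K_\varphi^\infty : a \in \varphi^{-1}(\Kk(X)) \cap (\ker\varphi + \ker\varphi^\perp)\}$, and it remains to identify $\varphi^{-1}(\Kk(X)) \cap (\ker\varphi + \ker\varphi^\perp)$ with $J_X \oplus \ker\varphi$. The inclusion $\supseteq$ is clear since $J_X = \varphi^{-1}(\Kk(X)) \cap \ker\varphi^\perp$ and $\ker\varphi \subseteq \varphi^{-1}(\Kk(X))$ (as $0 \in \Kk(X)$); for $\subseteq$, if $a \in \varphi^{-1}(\Kk(X))$ and $a = b + c$ with $b \in \ker\varphi^\perp$ and $c \in \ker\varphi$, then $\varphi(b) = \varphi(a) \in \Kk(X)$, so $b \in J_X$ and hence $a \in J_X + \ker\varphi$, the sum being direct because $J_X \subseteq \ker\varphi^\perp$. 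This yields $(\varphiY)^{-1}(\Kk(Y)) = (J_X \oplus \ker\varphi) \oplus K_\varphi^\infty$, as claimed. The one genuinely delicate step is the coordinate analysis in the middle paragraph: compactness of $S_{(a,f)}$ constrains the ``$A$-entry'' $a$ only modulo $\ker\varphi^\perp$, so the correct condition on $a$ is membership in $\ker\varphi + \ker\varphi^\perp$ rather than the naive $\ker\varphi$, and it is exactly this that produces the summand $J_X$ (rather than merely $\ker\varphi$) in the final answer.
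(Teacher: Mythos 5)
Your proof is correct and follows essentially the same route as the paper's: both exploit the block-diagonal form of $\varphiY(a,f)$, identify the compact operators on the tail with left multiplications by elements of $K_\varphi^\infty$, and extract the key condition that $a$ must lie in $\ker\varphi + \ker\varphi^\perp$, which combined with $\varphi(a)\in\Kk(X)$ places $a$ in $J_X\oplus\ker\varphi$. The only cosmetic difference is that you split $Y$ as $X\oplus T$ and solve $S_{(a,f)}=L_h$ coordinatewise, whereas the paper groups the first tail coordinate with the $X$-part via the decomposition $\varphi^A\oplus\varphi^T$; the substance is identical.
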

\begin{proof}
As discussed just before the statement of the lemma, we have $J_X + \ker\varphi = J_X \oplus
\ker\varphi$, so it suffices to show that $(\varphiY)^{-1}(\Kk(Y)) = (J_X +
\ker\varphi) \oplus K_\varphi^\infty$

To prove that $(J_X + \ker\varphi) \oplus K_\varphi^\infty \subseteq (\varphiY)^{-1}(\Kk(Y))$, fix $j+a\in J_X + \ker\varphi$, and $f\in K_\varphi^\infty$. For $x\in X,g\in T$, since
$\varphi(a + j) = \varphi(j)$ and since $j \cdot T = 0$, we have
\[
    \varphiY(j+a,f)(x,g)=(\varphi(j)x,ag_1,f_1g_2,f_2g_3,\dots).
\]
Since $j\in J_X$ we have $\varphi(j)\in\Kk(X)$. Further, letting $F=(a,f_1,f_2,\dots)$ and letting
$L_F\in\Kk(T)$ denote the left multiplication operator by $F$ we have
\[
\varphiY(j+a,f)(x,g)=(\varphi(j)x,a g_1,f_1g_2,\dots)=(\varphi(j)x,0,0,\dots)+(0,L_Fg),
\]
so $\varphiY(j+a,f)=(\varphi(j),L_F)\in\Kk(X)\oplus\Kk(T)\subset\Kk(X\oplus T)$.

To prove that $(\varphiY)^{-1}(\Kk(Y)) \subseteq J_X \oplus \ker\varphi \oplus
K_\varphi^\infty$, first note that $\varphiY$ decomposes as an (internal) direct sum
$\varphi^A \oplus \varphi^T$ of the homomorphisms $\varphi^A$ and $\varphi^T$ given by
\[
\varphi^A(a,f)(x,g_1,g_2,\dots)=(\varphi(a)x,ag_1,0,0\dots),\,\text{and }\varphi^T(a,f)(x,g)=(0,0,f_1g_2,f_2g_3,\dots).
\]

Suppose that $(a,f)\in(\varphiY)^{-1}(\Kk(Y))$; that is, $\varphiY(a,f)$ is
compact. Since $\varphi^T(a,f)$ is left multiplication by $(0,0,f_1,f_2,\dots)$, it is compact.
Hence $\varphi^A=\varphiY-\varphi^T$ is also compact. In particular $\varphi(a)$ is
compact as it is the restriction of $\varphi^A(a,f)$ to the first entry, and so $a \in
\varphi^{-1}(\Kk(X))$. Since $a$ also acts compactly on the first coordinate of $T$, which is the
right-Hilbert module $(\ker\varphi)_{\ker\varphi}$, we see that the left-multiplication-by-$a$
operator on $(\ker\varphi)_{\ker\varphi}$ agrees with left-multiplication by some $a' \in
\ker\varphi$. In particular, $j := a - a' \in (\ker\varphi)^\perp$. Since $\varphi(j) = \varphi(a)
- \varphi(a') = \varphi(a)$, we have $j \in \varphi^{-1}(\Kk(X)) \cap (\ker\varphi)^\perp = J_X$.
Hence $a = j + a' \in J_X + \ker\varphi$.
\end{proof}

Having identified $(\varphiY)^{-1}(\Kk(Y))$ with $I:=J_X \oplus \ker\varphi \oplus
K_\varphi^\infty$, since $\varphiY$ is injective, we can apply the graded version of Pimsner's
exact sequence \cite[Theorem~7.0.3]{PattersonHons} to the $(A \oplus K_\varphi^\infty)$--$(A
\oplus K_\varphi^\infty)$-correspondence $Y$. To avoid overly heavy notation in the resulting
sequences \eqref{eq:covariant}~and~\eqref{eq:contravariant}, we employ the following slight abuses
of notation: In the following diagrams, although $\iota:J_X\oplus\ker\varphi\to A$ is the
inclusion map, we write $[\iota]$ for the class $[I, \iota\oplus \id_{K_\varphi^\infty}, 0,
\alpha_I]\in KK(I, A \oplus K_\varphi^\infty)$; moreover, in these diagrams we write $[X]$ and
$[T]$ for the classes $[X, \varphi|_{J_X + \ker \varphi}, 0, \alpha_X]$ and $[T, \varphi_T|_I, 0,
\alpha_T]$ respectively. Both are elements of $KK(I, A \oplus K_\varphi^\infty)$ by letting
$K_\varphi^\infty$ act trivially on $X$ (on the left and right) and letting $A$ act trivially on
$T$ on the right (but not on the left). While this is slightly at odds with our usual use of, for
example, the notation $[X]$ corresponding to a Hilbert module $X$, the ambient $KK$-groups in the
diagrams should provide enough context to avoid confusion.
\begin{equation}\label{eq:covariant}
\parbox[c]{0.92\textwidth}{
\begin{tikzpicture}[xscale=0.8, yscale=0.6, >=stealth]
	\node (00) at (0,0) {$KK_1(B, \Oo_Y)$};
	\node (40) at (4.5,0) {$KK_1(B, A\oplus K_\varphi^\infty)$};
	\node (80) at (12,0) {$KK_1(B, J_X\oplus\ker\varphi\oplus K_\varphi^\infty)$};
	\node (82) at (12,2) {$KK_0(B, \Oo_X)$};
	\node (42) at (7.5,2) {$KK_0(B, A\oplus K_\varphi^\infty)$};
	\node (02) at (0,2) {$KK_0(B, J_X\oplus\ker\varphi\oplus K_\varphi^\infty)$};
	\draw[->] (02)-- node[above] {${\scriptstyle{\hatimes ([\iota] - [X] - [T])}}$} (42);
	\draw[->] (42)-- node[above] {${\scriptstyle i_*}$} (82);
	\draw[->] (82)--(80);
	\draw[->] (80)-- node[above] {${\scriptstyle{\hatimes ([\iota] - [X] - [T])}}$} (40);
	\draw[->] (40)-- node[above] {${\scriptstyle i_*}$} (00);
	\draw[->] (00)--(02);
\end{tikzpicture}}
\end{equation}
\begin{equation}\label{eq:contravariant}
\parbox[c]{0.92\textwidth}{
\begin{tikzpicture}[xscale=0.8, yscale=0.6, >=stealth]
	\node (00) at (0,0) {$KK_1(\Oo_Y, B)$};
	\node (40) at (4.5,0) {$KK_1(A\oplus K_\varphi^\infty, B)$};
	\node (80) at (12,0) {$KK_1(J_X\oplus\ker\varphi\oplus K_\varphi^\infty, B)$};
	\node (82) at (12,2) {$KK_0(\Oo_X, B)$};
	\node (42) at (7.5,2) {$KK_0(A\oplus K_\varphi^\infty, B)$};
	\node (02) at (0,2) {$KK_0(J_X\oplus\ker\varphi\oplus K_\varphi^\infty, B)$};
	\draw[<-] (02)-- node[above] {${\scriptstyle{([\iota] - [X] - [T])} \hatimes}$} (42);
	\draw[<-] (42)-- node[above] {${\scriptstyle i^*}$} (82);
	\draw[<-] (82)--(80);
	\draw[<-] (80)-- node[above] {${\scriptstyle{([\iota] - [X] - [T]) \hatimes }}$} (40);
	\draw[<-] (40)-- node[above] {${\scriptstyle i^*}$} (00);
	\draw[<-] (00)--(02);
\end{tikzpicture}}
\end{equation}

\subsection{The covariant exact sequence}

In this section we will use~\eqref{eq:covariant} to recover our exact sequence describing the
graded $K$-theory $KK(\CC, \Oo_X)$.


\begin{prp}\label{Prop:T tensor is shift}
For any graded $C^*$-algebra $B$, define the
\begin{align*}\textstyle
\textstyle U:KK(B,J_X)\oplus KK(B,\ker\varphi)&
\textstyle \oplus\big(\bigoplus_{n=1}^\infty KK(B,\ker\varphi)\big)\\
&\textstyle\to KK(B,A)\oplus\big(\bigoplus_{n=1}^\infty KK(B,\ker\varphi)\big),
\end{align*}
by $U(j,a,(f_1,f_2,\dots))=(0,(a,f_1,f_2,\dots))$. Let $\omega : \bigoplus_{n=1}^\infty
KK(B,\ker\varphi) \to KK(B,K_\varphi^\infty)$ be the canonical homomorphism of
\eqref{omegamap}, and let $[T]=[T,\varphiY,0,\alpha_T]$ be the Kasparov class of
$T$. Then the following diagram commutes
\begin{equation*}
\xymatrix{\hspace{-1cm}
	KK(B,J_X\oplus\ker\varphi\oplus K_\varphi^\infty)\ar[r]^{\ \ \ \  \otimes [T]\ \ \ \ } & KK(B,A\oplus K_\varphi^\infty)\\
    \displaystyle KK(B,J_X)\oplus KK(B,\ker\varphi)\oplus\bigoplus_{n=1}^{\infty}KK(B,\ker\varphi) \ar[r]^{\ \ \ \ \ \ \ \ \ \ \ \ \ \  U\ \ \ \ } \ar[u]_{\omega} &\displaystyle KK(B,A)\oplus\bigoplus_{n=1}^{\infty}KK(B,\ker\varphi) \ar[u]^{\omega}\\
}
\end{equation*}
\end{prp}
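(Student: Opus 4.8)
The plan is to use additivity of all four maps in the square to reduce to checking the identity on classes supported in a single summand of the bottom-left group, and there to compute the relevant internal tensor product explicitly.

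First I would unwind the left action $\psi := \varphi^Y|_I$ of $I := J_X \oplus \ker\varphi \oplus K_\varphi^\infty$ on $T$. Write $T = \bigoplus_{m \ge 1} T_m$, where $T_m := (\ker\varphi)_{\ker\varphi}$ is the $m$-th coordinate submodule of $T = K_\varphi^\infty$. Reading off $\varphi^Y(a,f)(x,g) = (\varphi(a)x, ag_1, f_1 g_2, f_2 g_3, \dots)$ on the $T$-summand of $Y$, one sees that $\psi$ respects this decomposition and acts as ``the shift'': $J_X$ acts as $0$ on all of $T$ (since $J_X \subseteq (\ker\varphi)^\perp$ and each $T_m$ is a right $\ker\varphi$-module); the distinguished summand $\ker\varphi$ of $I$ acts on $T_1$ by the left-regular representation $L$ and as $0$ on $T_m$ for $m \ge 2$; and, for each $n \ge 1$, the $n$-th copy of $\ker\varphi$ inside $K_\varphi^\infty$ acts on $T_{n+1}$ by $L$ and as $0$ on the remaining coordinates. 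In particular $\psi(I) \subseteq \Kk(T)$, so, since the representative $(T, \psi, 0, \alpha_T)$ of $[T]$ has zero Fredholm operator with coefficient-algebra action by compact operators, for any Kasparov $B$--$I$-module $(Z, \phi, F, \alpha)$ the Kasparov product is the internal tensor product, $[Z,\phi,F,\alpha] \hatimes_I [T] = [Z \hatimes_\psi T,\, \phi \hatimes 1,\, F \hatimes 1,\, \alpha \hatimes \alpha_T]$ (cf.\ \cite[Proposition~18.10.1]{B}).

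Next, because $\omega$, $U$ and $\cdot \hatimes_I [T]$ are additive and every element of the bottom-left group is a finite sum of elements supported in a single summand, it suffices to treat three kinds of generator $[Z] := [Z, \phi, F, \alpha]$: one in $KK(B, J_X)$; one in the distinguished summand $KK(B, \ker\varphi)$; and one in the $n$-th copy of $KK(B, \ker\varphi)$ inside $\bigoplus_n KK(B, \ker\varphi)$. In each case $\omega$ sends $[Z]$ to the same quadruple viewed as a Kasparov $B$--$I$-module via the inclusion of the relevant ideal of $I$, so in $Z \hatimes_\psi T$ the coefficient algebra of $Z$ acts on $T$ through exactly the coordinate identified above (and as $0$ elsewhere). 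Hence along the top-then-right route we obtain: the zero module in the $J_X$-case; and in the other two cases $Z \hatimes_\psi T \cong Z \hatimes_L (\ker\varphi)_{\ker\varphi} \cong Z$ via $z \hatimes a \mapsto z\cdot a$, sitting in the coordinate $T_1$, resp.\ $T_{n+1}$, of $T$. This isomorphism intertwines $F \hatimes 1$ with $F$ and the restricted grading with $\alpha$, so regarded as a right $(A \oplus K_\varphi^\infty)$-module the answer is $[Z]$ placed in the $1$st, resp.\ $(n+1)$-th, coordinate of $K_\varphi^\infty$ with $A$ acting as $0$. Along the right-then-up route, $U$ sends the $J_X$-generator to $0$, the $\ker\varphi$-generator to $(0,([Z],0,0,\dots))$ with $[Z]$ in the first coordinate of $\bigoplus_m KK(B,\ker\varphi)$, and the $n$-th-copy generator to $(0,(0,\dots,0,[Z],0,\dots))$ with $[Z]$ in coordinate $n+1$; applying $\omega$ reproduces precisely the modules just found. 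So the square commutes on generators, hence everywhere.

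The only real point of care is the index bookkeeping: the left $I$-action on $T$ shifts the $\ker\varphi$-coordinates up by one (the distinguished $\ker\varphi$ feeding $T_1$, the $n$-th copy feeding $T_{n+1}$, and $J_X$ feeding nothing), which is exactly why the bottom map is the shift $U$ rather than the identity, and this shift must be tracked faithfully through the identifications $Z \hatimes_\psi T \cong Z$ so that the output of the tensor-product route lands in the same coordinate as that of $\omega \circ U$. Everything else --- compatibility with Fredholm operators (transported by $\cdot \hatimes 1$) and with gradings --- is then formal, using standard properties of internal tensor products (see, e.g., \cite[Chapter~4]{Lance}) and the vanishing of the Fredholm operator of $[T]$.
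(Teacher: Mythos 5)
Your proposal is correct and follows essentially the same route as the paper: reduce by additivity to classes supported in a single summand of $I$, use that the Fredholm operator of $[T]$ is zero and $\varphi^Y(I)|_T\subseteq\Kk(T)$ to compute the Kasparov product as an internal tensor product, and then track the one-step shift in the coordinates of $T$ (with the $J_X$-part killed because $J_X\subseteq\ker\varphi^\perp$). The only cosmetic difference is that the paper verifies the vanishing of the $J_X$-case via a Cohen factorisation $j=k\cdot\langle k,k\rangle$ and identifies the target coordinate by computing inner products, whereas you argue directly from the vanishing of the left action; both are sound.
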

\begin{proof}
Denote the $n$th copy of $\ker\varphi$ in $K_\varphi^\infty$ by $K_n$, and let $K_0$ be the copy of
$\ker\varphi$ in $A$. Since classes of Kasaprov $B$--$J_X$-modules and Kasparov $B$--$K_n$-modules
generate $KK(B,J_X\oplus K_0\oplus K_\varphi^\infty)$ it suffices to show that the diagram commutes on such
elements.

Let $[J,\psi_J, F_j, \alpha_J]$ be a $B$--$J_X$-module. Since the Fredholm operator defining the
Kasparov class $[T]$ is zero, there is a 0-connection $F_J\hatimes 1$ for $J$ such that
\[
    [J,\psi_J, F_j, \alpha_J]\hatimes[T,\varphiY,0,\alpha_T]=[J\hatimes T, \psi_J\hatimes 1, F_J\hatimes 1,\alpha_J\hatimes\alpha_T].
\]
We claim this is the zero module. To see this, fix $j\in J$ and $f\in T$, and use
\cite[Proposition~2.31]{tfb} to write $j=k\cdot\langle k,k\rangle$. Using that $\langle
k,k\rangle\in\ker\varphi^\perp$ at the last equality, we compute
\[
j\hatimes f
    =k\cdot\langle k,k\rangle\hatimes f
    =k\hatimes\varphiY(\langle k,k\rangle)f
    =k\hatimes(0,\langle k,k\rangle f_1,0)
    =0.
\]
Since simple tensors span $J\hatimes T$, it follows the tensor product is zero. Hence
$\omega([J])\hatimes[T]=\omega \circ U([J])=0$.

Now consider a Kasparov $B$--$K_n$-module $(Z_n,\psi_n,F_n,\alpha_n)$. Since $Z_n$ is a
$B$--$K_n$-correspon\-dence there exists $a_z\in \ker \varphi$ such that $\langle z,z\rangle_i=a_z\delta_{i,n}$ for all $z\in Z_n$. Hence for $z\in Z_n$ and $f\in T$ we have
\[
\langle z\hatimes f,z\hatimes f\rangle_i
    =\langle f,\varphiY(\langle z,z\rangle)f\rangle_i
    =f^*_i\langle z,z\rangle_{i-1}f_i
    =f^*_ia_z\delta_{i-1,n}f_i,
\]
which is non-zero only if $i=n+1$. Hence $\langle z\hatimes f,z\hatimes f\rangle \in K_{n+1}$.
Thus $\langle y,y\rangle\in K_{n+1}$ for all $y\in Z_n\hatimes T$. With $j_n\colon KK(B,K_n)\to KK(B,J_X\oplus K_0\oplus K_\varphi^\infty)$ denoting the canonical inclusion,
\[
   \omega \circ j_n([Z_n])\hatimes[T]= \omega\big((\overbrace{0,\dots, 0}^{n\text{ terms}},[Z_n\hatimes T],0,\dots)\big)=\omega\circ j_{n+1}([Z_n\hatimes T])
\]
There is an isomorphism $Z_n\hatimes T \cong Z_n$ that carries an elementary tensor $z\hatimes f$
to $z\cdot f$. Thus, for $f_n= j_n [Z_n]$,
\begin{equation*}
\omega(U(f_n))=\omega(j_{n+1}[Z_n])= \omega \circ j_n([Z_n])\hatimes[T]=\omega(f_n)\hatimes[T]. \qedhere
\end{equation*}
\end{proof}

\begin{thm}\label{Theorem: First half of main thm A}
Let $(A,\alpha_A)$ be a graded separable nuclear $C^*$-algebra. Let $X$ be an essential graded
$A$--$A$-correspondence with left action $\varphi$. Let
$J_X=\varphi^{-1}(\Kk(X))\cap\ker\varphi^\perp$, and let $\iota_{J_X} : J_X\to A$ be the inclusion
map. Then there is an exact sequence
\[	
\begin{tikzpicture}[yscale=0.6, >=stealth]
	\node (00) at (0,0) {$KK_1(\CC, \Oo_X)$};
	\node (40) at (6,0) {$KK_1(\CC, A)$};
	\node (80) at (12,0) {$KK_1(\CC, J_X)$};
	\node (82) at (12,2) {$KK_0(\CC, \Oo_X)$};
	\node (42) at (6,2) {$KK_0(\CC, A)$};
	\node (02) at (0,2) {$KK_0(\CC, J_X)$};
	\draw[->] (02)-- node[above] {${\scriptstyle{\hatimes_A ([\iota_{J_X}] - [X])}}$} (42);
	\draw[->] (42)-- node[above] {${\scriptstyle i_*}$} (82);
	\draw[->] (82)--(80);
	\draw[->] (80)-- node[above] {${\scriptstyle{\hatimes_A ([\iota_{J_X}] - [X])}}$} (40);
	\draw[->] (40)-- node[above] {${\scriptstyle i_*}$} (00);
	\draw[->] (00)--(02);
	\end{tikzpicture}
\]
\end{thm}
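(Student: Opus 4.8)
The plan is to deduce Theorem~\ref{Theorem: First half of main thm A} from the covariant exact sequence \eqref{eq:covariant} for the adding-tails correspondence $Y$ over $B := A \oplus K_\varphi^\infty$, specialising the variable $B$ in \eqref{eq:covariant} to $\CC$ and then collapsing the "tail" contributions using countable additivity of $KK(\CC, \,\cdot\,)$ (Proposition~\ref{prp:KK(C,A) countable additivity}). First I would record that $\Oo_Y$ contains $\Oo_X$ as a full corner (Muhly--Tomforde \cite{MT}), hence $(\Oo_X, \alpha_X)$ and $(\Oo_Y, \alpha_Y)$ are graded Morita equivalent and therefore $KK$-equivalent, so $KK_i(\CC, \Oo_Y) \cong KK_i(\CC, \Oo_X)$; thus \eqref{eq:covariant} with $B = \CC$ is already an exact sequence whose $\Oo_Y$-terms may be read as $KK_i(\CC, \Oo_X)$.

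Next I would analyse the group $KK_i(\CC, A \oplus K_\varphi^\infty)$ and $KK_i(\CC, J_X \oplus \ker\varphi \oplus K_\varphi^\infty)$. By Proposition~\ref{prp:KK(C,A) countable additivity}, each splits as a direct sum: $KK_i(\CC, A \oplus K_\varphi^\infty) \cong KK_i(\CC, A) \oplus \bigoplus_{n=1}^\infty KK_i(\CC, \ker\varphi)$, and similarly for the other term with an extra $KK_i(\CC, J_X) \oplus KK_i(\CC, \ker\varphi)$ summand. Under these identifications Proposition~\ref{Prop:T tensor is shift} tells us that the map $\cdot \hatimes [T]$ acts as the shift $U(j,a,(f_1,f_2,\dots)) = (0,(a,f_1,f_2,\dots))$; combined with the obvious identification $\cdot \hatimes [\iota]$ and $\cdot \hatimes [X]$ (the latter being the class we are after), the map $\cdot \hatimes ([\iota] - [X] - [T])$ on the split groups is $(j,a,(f_n)) \mapsto (j \hatimes [\iota_{J_X}] - j \hatimes [X] - a, 0 - (a,f_1,f_2,\dots)) = (j \hatimes ([\iota_{J_X}] - [X]) - a, -(a, f_1, f_2, \dots))$ into $KK_i(\CC, A) \oplus \bigoplus_n KK_i(\CC, \ker\varphi)$. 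The key algebraic observation is that this map, together with $i_*$, admits an explicit splitting/retraction on the tail summands: the long exact sequence \eqref{eq:covariant} "dévisse", i.e. one checks by a diagram chase that the tail copies of $\ker\varphi$ contribute nothing to the homology of the complex because the shift map $U$ restricted to the tails is injective with cokernel isomorphic to the $n=0$ copy (which cancels against the $a$-coordinate in $A$). Concretely, I would use that $\ker(\iota - A^\delta)$ type cancellations reduce \eqref{eq:covariant} to the four-term (equivalently six-term) sequence with $J_X$, $A$, $\Oo_X$ only.

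The cleanest way to present this is probably to observe that the inclusion of complexes $\big(KK_i(\CC,J_X) \xrightarrow{\cdot \hatimes([\iota_{J_X}]-[X])} KK_i(\CC,A) \xrightarrow{i_*} KK_i(\CC,\Oo_X)\big) \hookrightarrow$ (the $B=\CC$ version of \eqref{eq:covariant}) induces a morphism of the associated six-term sequences which is an isomorphism on the $\Oo_X$-terms and whose cokernel complex $\big(\bigoplus_n KK_i(\CC,\ker\varphi) \xrightarrow{\text{shift} \oplus (-\id)} KK_i(\CC,\ker\varphi) \oplus \bigoplus_n KK_i(\CC,\ker\varphi) \xrightarrow{} 0\big)$ is exact (acyclic), so the Five Lemma applied to the long exact homology sequences of these complexes yields exactness of the desired sequence. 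The main obstacle I anticipate is the bookkeeping in matching up $\cdot \hatimes [\iota]$, $\cdot \hatimes [X]$ and $\cdot \hatimes [T]$ on the split-up groups correctly — in particular verifying that the "diagonal" $\ker\varphi \hookrightarrow J_X \oplus \ker\varphi$ behaviour of $[\iota]$ interacts with the shift from $[T]$ so that exactly one copy of $KK_i(\CC,\ker\varphi)$ survives at each stage and then cancels, leaving the $J_X$-to-$A$ map as stated; this is a purely homological-algebra verification but requires care with signs and with the identification $J_X + \ker\varphi = J_X \oplus \ker\varphi$ from Proposition~\ref{ideal.I}. Once that is set up, acyclicity of the tail complex is immediate since a shift map on a countable direct sum is injective with the "missing first coordinate" as cokernel, which is precisely killed by the $-\id$ onto the $n=0$ slot.
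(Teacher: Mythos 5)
Your strategy is the same as the paper's: add tails to get an injective module $Y$ over $A\oplus K_\varphi^\infty$, apply the covariant sequence \eqref{eq:covariant} with $B=\CC$, use Proposition~\ref{prp:KK(C,A) countable additivity} to split the $KK(\CC,\,\cdot\,)$ groups, use Proposition~\ref{Prop:T tensor is shift} to identify $\cdot\hatimes[T]$ with the shift, cancel the tail contributions, and finish with the graded Morita equivalence $\Oo_X\sim\Oo_Y$. The only difference is presentational: you package the cancellation as a short exact sequence of periodic complexes with acyclic quotient, whereas the paper builds a ten-term diagram and verifies exactness of the outer sequence at each of the three points by hand. Both work, and yours is arguably cleaner, provided you verify the one nontrivial input your packaging needs, namely that the connecting map $\partial$ of \eqref{eq:covariant} actually lands in $KK(\CC,J_X)\oplus 0\oplus 0$ (so that the small sequence really is a subcomplex); this is the paper's claim \eqref{kerpartial}, and it is exactly where the finite-support property of elements of $\bigoplus_n KK(\CC,\ker\varphi)$ (i.e.\ countable additivity, direct sum rather than product) is used.

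One concrete error: your displayed formula for the middle map is wrong. Since $\iota_I$ includes $\ker\varphi$ into $A$ and acts as the identity on the $K_\varphi^\infty$ summand, one has $(j,a,f)\hatimes[\iota_I]=(j+a,\,f)$, so
\[
(j,a,f)\hatimes([\iota_I]-[X]-[T]) \;=\; \bigl(j+a-j\hatimes[X],\ (f_1-a,\ f_2-f_1,\ \dots)\bigr),
\]
not $\bigl(j\hatimes([\iota_{J_X}]-[X])-a,\ -(a,f_1,f_2,\dots)\bigr)$: you have the sign of $a$ in the $A$-slot reversed and you have dropped the contribution $(0,f)$ of $f\hatimes[\iota_I]$ from the tail. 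You flagged this bookkeeping as the delicate point, and indeed with the corrected formula the tail map $(a,f)\mapsto(f_1-a,f_2-f_1,\dots)$ is still a bijection of $KK(\CC,\ker\varphi)\oplus\bigoplus_n KK(\CC,\ker\varphi)$ onto $\bigoplus_n KK(\CC,\ker\varphi)$ by the telescoping/finite-support argument, so your quotient complex is acyclic and the argument closes; but as written the formula would not survive refereeing.
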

\begin{proof}
By \cite[Lemma~4.2]{MT} the left action $\varphiY$ on $Y=X\oplus T$ is injective. Let
$I:=(\varphiY)^{-1}(\Kk(Y))$ and let $\iota_I : I\hookrightarrow A\oplus K_\varphi^\infty$ be the
inclusion map. Consider the resulting exact sequence~\eqref{eq:covariant}. Let
$P:KK(\CC,(J_X+\ker\varphi)\oplus K_\varphi^\infty)\to KK(\CC,J_X)$ be the projection map given by
$P[Z,\psi,F,\alpha_Z]=[Z\cdot J_X,\psi,F,\alpha_Z]$, and let $\ell:KK(\CC,A)\to KK(\CC,A\oplus
K_\varphi^\infty)$ be the inclusion map. Consider the following ten-term diagram
with~\eqref{eq:covariant} as its central 6-term rectangle.
\[
	\begin{tikzpicture}[xscale=0.9, yscale=0.4, >=stealth]
	\node (00) at (0,0) {$KK_1(\CC, \Oo_Y)$};
	\node (40) at (4.5,0) {$KK_1(\CC, A\oplus K_\varphi^\infty)$};
	\node (80) at (12,0) {$KK_1(\CC, J_X\oplus\ker\varphi\oplus K_\varphi^\infty)$};
	\node (82) at (12,4) {$KK_0(\CC, \Oo_Y)$};
	\node (42) at (7.5,4) {$KK_0(\CC, A\oplus K_\varphi^\infty)$};
	\node (02) at (0,4) {$KK_0(\CC, J_X\oplus\ker\varphi\oplus K_\varphi^\infty)$};
	
	\node (04) at (0,8) {$KK_0(\CC,J_X)$};
	\node (44) at (7.5,8) {$KK_0(\CC,A)$};
	\node (8-4) at (12,-4) {$KK_1(\CC,J_X)$};
	\node (4-4) at (4.5,-4) {$KK_1(\CC,A)$};
	
	\draw[->] (02)-- node[above] {${\scriptstyle{\hatimes_A ([\iota_I] - [X] - [T])}}$} (42);
	\draw[->] (42)-- node[above] {${\scriptstyle i_*}$} (82);
	\draw[->] (82)-- node[right] {${\scriptstyle\partial}$} (80);
	\draw[->] (80)-- node[above] {${\scriptstyle{\hatimes_A ([\iota_I] - [X] - [T])}}$} (40);
	\draw[->] (40)-- node[above] {${\scriptstyle i_*}$} (00);
	\draw[->] (00)-- node[left] {${\scriptstyle\partial}$} (02);
	
	\draw[->] (02)-- node[left] {{$\scriptstyle P$}} (04);
	\draw[->] (04)-- node[above] {${\scriptstyle\hatimes_A([\iota_{J_X}]-[X])}$} (44);
	\draw[->] (44)-- node[right] {{$\scriptstyle\ell$}} (42);
	\draw[->] (80)-- node[right] {{$\scriptstyle P$}} (8-4);
	\draw[->] (8-4)-- node[above] {${\scriptstyle\hatimes_A([\iota_{J_X}]-[X])}$} (4-4);
	\draw[->] (4-4)-- node[left] {{$\scriptstyle\ell$}} (40);
	\end{tikzpicture}
\]
We show that the sequence
\begin{equation}\label{Eq: newSequence}
\parbox[c]{0.9\textwidth}{\mbox{}\hfill
	\begin{tikzpicture}[xscale=0.9,yscale=0.8, >=stealth]
	\node (00) at (0,0) {$KK_1(\CC, \Oo_Y)$};
	\node (40) at (6,0) {$KK_1(\CC, A)$};
	\node (80) at (12,0) {$KK_1(\CC, J_X)$};
	\node (82) at (12,2) {$KK_0(\CC, \Oo_Y)$};
	\node (42) at (6,2) {$KK_0(\CC, A)$};
	\node (02) at (0,2) {$KK_0(\CC, J_X)$};
	\draw[->] (02)-- node[above] {${\scriptstyle{\hatimes_A ([\iota_{J_X}] - [X])}}$} (42);
	\draw[->] (42)-- node[above] {${\scriptstyle i_* \circ \ell}$} (82);
	\draw[->] (82)-- node[right] {${\scriptstyle P\circ\partial}$}(80);
	\draw[->] (80)-- node[above] {${\scriptstyle{\hatimes_A ([\iota_{J_X}] - [X])}}$} (40);
	\draw[->] (40)-- node[above] {${\scriptstyle i_* \circ \ell}$} (00);
	\draw[->] (00)-- node[left] {${\scriptstyle P\circ\partial}$} (02);
	\end{tikzpicture}\hfill\mbox{}}
\end{equation}
obtained by traversing the outside of the ten-term diagram is exact. First we show that~\eqref{Eq:
newSequence} is exact at $KK_*(\CC,\Oo_Y)$. We claim that $\ker(P \circ
\partial) = \ker\partial$. To see this, observe that $\Img(\partial) = \ker(\cdot \hatimes
([\iota_I] - [X] - [T]))$ by exactness of~\eqref{eq:covariant}. Identifying direct sums as in
Proposition~\ref{prp:KK(C,A) countable additivity}, suppose that
$(j,a,f)\hatimes([\iota_I]-[X]-[T])=0$ for some $j\in KK(\CC,J_X), a\in KK(\CC,\ker\varphi)$, and
$f=(f_1,f_2,\dots)\in \bigoplus_{n=1}^\infty KK(\CC,\ker\varphi)$. Since $\iota_I$ is just the
inclusion of $I$ into $A \oplus K_\varphi^\infty$, the map $(\iota_I)_* := \cdot \hatimes
[\iota_I] : KK(\CC, I) \to KK(\CC, A)$ is just the natural inclusion. Hence under the
identification of Proposition~\ref{ideal.I}, $j\hatimes[\iota_I]$, $a\hatimes[\iota_I]$, and
$f\hatimes[\iota_I]$ are the natural images of $j$, $a$ and $f$ in $KK(\CC,A\oplus
K_\varphi^\infty)$. Hence $(j, a, f) \hatimes [\iota_I] = (j+a, f)$.

Since $\ker\varphi$ acts trivially on $X$, we have $a\hatimes [X]=f\hatimes[X]=0$. Hence
$(j,a,f)\hatimes[X]=(j\hatimes[X],0,0)$. Using Proposition~\ref{Prop:T tensor is shift}, $(j,a,(f_1,f_2,\dots))\hatimes [T]=(0,(a,f_1,f_2,\dots))$, so
\begin{equation}\label{Eq: finiteSeq}
0=(j,a,f)\hatimes([\iota_I]-[X]-[T])=(j+a-j\hatimes[X],f_1-a,f_2-f_1,\dots,f_{n}-f_{n-1},\dots).
\end{equation}
Proposition~\ref{prp:KK(C,A) countable additivity} shows that $f=(f_1,f_2,\dots) \in
\bigoplus_{n=1}^\infty KK(\CC,\ker\varphi)$, so there exists $N\in N$ such that $f_n=0$ for all
$n> N$. Hence~\eqref{Eq: finiteSeq} becomes
\[
0=(j+a-j\hatimes[X],f_1-a,f_2-f_1,\dots,f_{N}-f_{N-1},-f_N,0,0,\dots)
\]
forcing $f_N=0$. Continuing recursively down the sequence we have $f_k=0$ for each $k\leq N$, and
$a=0$. Hence $(j,a,f)=(j,0,0)$. We conclude
\begin{equation}
\label{kerpartial}
\Img(\partial) \subseteq KK(\CC, J_X) \oplus 0 \oplus 0,
\end{equation}
so
$P|_{\ker\partial}$ is injective, and therefore $\ker(P\circ\partial) = \ker(\partial)$.

To establish exactness of~\eqref{Eq: newSequence} at $KK_*(\CC, \Oo_Y)$, it now suffices to show
that $\Img(\iota_X \circ \ell) = \Img(\iota_X)$. For this, we first claim that
\begin{equation}\label{Eq: img of tensor iota-Y}
	\Img(\hatimes([\iota_I]-[X]-[T])\big|_{KK(\CC,\ker\varphi\oplus T)})=\Big\{\Big(-\sum f_j,f\Big):f\in KK(\CC,T)\Big\}.
\end{equation}
For all $a\in KK(\CC,\ker\varphi)$ and $f\in KK(\CC,T)$, the product
$(0,a,f)\hatimes([\iota_I]-[X]-[T])$ is of the form $\big(-\sum g_j,g\big)$, where
$g_j=f_j-f_{j-1}$ for $j>1$, and $g_1=f_1-a$. Conversely, we have $\big(-\sum
f_j,f\big)=(0,a,g)\hatimes([\iota_I]-[X]-[T])$ for $g_j=-\sum_{k=0}^{n-j-1}f_{n-k}$ and
$a=-\sum_{k=1}^n f_k$, where $n$ is the index of the last non-zero entry of $f$. This proves
\eqref{Eq: img of tensor iota-Y}. By exactness of the inner rectangle~\eqref{eq:covariant} of the
ten-term diagram, we have $\Img(\hatimes([\iota_I]-[X]-[T])\big|_{KK(\CC,\ker\varphi\oplus
T)})\subseteq\Img(\hatimes([\iota_I]-[X]-[T]))=\ker i_*$, so we deduce that $i_*\big(-\sum
f_j,f\big)=0$ for all $f\in KK(\CC,T)$. Now suppose $i_*(a,f)\in\Img(i_*)$. Then
\[
i_*(a,f)=i_*(a+\sum f_j,0)+i_*(-\sum f_j,f)=i_*(a+\sum f_j,0)=i_*\circ\ell(a+\sum f_j).
\]
Hence $\Img(i_*\circ\ell)=\Img(i_*)=\ker(\partial)$ by exactness of~\eqref{eq:covariant}. This
completes the proof of exactness of~\eqref{Eq: newSequence} at $KK_*(\CC, \Oo_Y)$.

We now establish exactness of~\eqref{Eq: newSequence} at $KK_*(\CC,J_X)$. We have already
demonstrated in \eqref{kerpartial} that
$\ker(\hatimes([\iota_I]-[X]-[T]))=\ker(\hatimes([\iota_I]-[X]-[T])|_{KK(\CC,J_X)})$, and further by Proposition~\ref{Prop:T tensor is shift}
we have that $\cdot \hatimes([\iota_I]-[X]-[T])|_{KK(B,J_X)} = \cdot\hatimes([\iota_{J_X}]-[X])$.
Hence
\[
\Img(i_*\circ\ell)=\Img(i_*)=\ker(\hatimes([\iota_I]-[X]-[T]))=\ker(\hatimes([\iota_{J_X}]-[X])),
\]
giving exactness at $KK_*(\CC,J_X)$.

Next, we establish exactness of~\eqref{Eq: newSequence} at $KK_*(\CC,A)$. By definition we have
\[
\ker(i_*\circ\ell)=\{a\in KK(\CC,A),i_*(a,0)=0\}=\ker(i_*)\cap( KK(\CC,A)\oplus \{0\}).
\]
Suppose that for $j\in KK(\CC,J_X), a\in KK(\CC,\ker\varphi)$ and $f\in\bigoplus_{n=1}^\infty KK(\CC,\ker\varphi)$ we
have $(j,a,f)\hatimes([\iota_I]-[X]-[T])=(b,0)$ for some $b\in KK(\CC,A)$. Then
\begin{align*}
(j,a,f)\hatimes([\iota_I]-[X]-[T])
    &=(j+a-j\hatimes[X],f_1-a,f_2-f_1,\dots,-f_N,0,\dots)\\
    &=(b,0,\dots),
\end{align*}
where $N\in\NN$ is the index of the last non-zero component of $f$. We have $f_N=0$, and so
recursively, $f_j=0$ for each $j>0$, and $a=0$. Hence
$(j,a,f)\hatimes([\iota_I]-[X]-[T])=(j,0)\hatimes([\iota_{J_X}]-[X])\in\Img(\hatimes([\iota_{J_X}]-[X]))$.
Thus $\Img(\hatimes([\iota_I]-[X]-[T]))\cap(KK(\CC,A)\oplus \{0\})=\Img([\iota_{J_X}]-[X])$, and
so by exactness of~\eqref{eq:covariant}, we deduce that
$\Img(\hatimes([\iota_{J_X}]-[X]))=\ker(i_*\circ\ell)$. This proves exactness of~\eqref{Eq:
newSequence}.

The inclusion $i : A \oplus K_\varphi^\infty \to \Oo_Y$ is nondegenerate and so extends to a
homomorphism $\tilde{i} : \Mm(A \oplus K_\varphi^\infty) \to \Mm(\Oo_Y)$. Theorem~4.3 of \cite{MT}
shows that $Q := \tilde{i}(1_{\Mm(A)}) \in \Mm(\Oo_Y)$ is a full projection and that $Q \Oo_Y Q
\cong \Oo_X$. Since $Q$ is trivially graded with respect to the grading on $A \oplus
K_\varphi^\infty$, the space $\Oo_Y Q$ is a graded imprimitivity $\Oo_Y$--$\Oo_X$-module. So
$(\Oo_Y, \alpha_{\Oo_Y})$ and $(\Oo_X, \alpha_{\Oo_X})$ are $KK$-equivalent as discussed in
Section~\ref{sec:KKBasics}. In particular, $KK_*(\CC,\Oo_X)\cong KK_*(\CC,\Oo_Y)$. Now $ i_* \circ
\ell\colon KK(\CC,A)\to KK(\CC,\Oo_Y)$ restricts to $ i_* \colon KK(\CC,A)\to KK(\CC,\Oo_X)$,
giving the desired sequence.
\end{proof}

\subsection{The contravariant exact sequence}

We now use similar techniques to those used in the preceding subsection to obtain an exact
sequence describing $KK(\Oo_X, B)$ using the contravariant exact
sequence~\eqref{eq:contravariant}.

\begin{prp}\label{Prop:T tensor description} Define
\begin{align*}
\overline{U}: KK(A, B)\oplus{}&\left(\prod_{n=1}^\infty KK(\ker\varphi, B)\right)\\
     &\to KK(J_X, B)\oplus KK(\ker\varphi, B)\oplus\left(\prod_{n=1}^\infty KK(\ker\varphi, B)\right)
\end{align*}
by $\overline{U}(a,f_1,f_2,\dots)=(0,f_1,f_2,\dots)$. Let $\overline{\zeta} : KK(K_\varphi^\infty,
B)\to \prod_{n=1}^\infty KK(\ker\varphi, B)$ be the isomorphism discussed at the beginning of
Section~\ref{sec:additive}, and let $[T]=[T,\varphiY,0,\alpha_T]$ be the class of the
module associated to $T$. Then the following diagram commutes.
\begin{equation*}
\xymatrix{\hspace{-1cm}
	KK((J_X\oplus\ker\varphi)\oplus K^\infty_\varphi, B)\ar[d]_{\zeta}& \ar[l]^{\ \ \ \  [T] \hatimes\ \ \ \ }KK(A\oplus K^\infty_\varphi, B)\ar[d]^{\overline{\zeta}}\\
	\displaystyle KK(J_X, B)\oplus KK(\ker\varphi, B)\oplus\prod_{n=1}^{\infty}KK(B,\ker\varphi) & \ar[l]^{\ \ \ \ \ \ \ \ \ \ \ \ \ \  \overline{U}\ \ \ \ } \displaystyle KK(A,B)\oplus\prod_{n=1}^{\infty}KK(\ker\varphi, B)\\
}
\end{equation*}
\end{prp}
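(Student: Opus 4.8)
The plan is to run the argument of Proposition~\ref{Prop:T tensor is shift} in the first variable rather than the second. Write $I=J_X\oplus\ker\varphi\oplus K_\varphi^\infty$, and let $K_n$ denote the $n$th copy of $\ker\varphi$ inside $K_\varphi^\infty$. The key structural observation is that both vertical maps in the square are assembled from pullbacks along coordinate inclusions (using finite-sum additivity of $KK(\,\cdot\,,B)$ and the isomorphism $\overline\zeta$ of \cite[Theorem~19.7.1]{B}): the left-hand map $\zeta$ has components $\iota_S^*$ as $S$ runs over $J_X$, the middle copy of $\ker\varphi$, and the $K_n\subseteq K_\varphi^\infty\subseteq I$, while the right-hand map $\overline\zeta$ has components $\iota_A^*$ and $\iota_{K_n}^*$ ($n\ge1$), where now $\iota_{K_n}\colon\ker\varphi\cong K_n\hookrightarrow A\oplus K_\varphi^\infty$ is the $n$th coordinate inclusion. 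So, fixing $\gamma\in KK(A\oplus K_\varphi^\infty,B)$ with $\overline\zeta\gamma=(a,f_1,f_2,\dots)$ (that is, $a=\iota_A^*\gamma$ and $f_n=\iota_{K_n}^*\gamma$), it suffices to show that $\iota_S^*([T]\hatimes\gamma)$ equals the $S$-coordinate of $(0,f_1,f_2,\dots)$ for each coordinate inclusion $\iota_S\colon S\hookrightarrow I$. Using $\iota_S^*=[\iota_S]\hatimes_I\cdot$ (\cite[Proposition~18.7.2]{B}) and associativity of the Kasparov product, this coordinate equals $(\iota_S^*[T])\hatimes_{A\oplus K_\varphi^\infty}\gamma$.

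The first real step is to compute $\iota_S^*[T]$. Since $[T]$ has zero operator, $\iota_S^*[T]=[T,\varphiY|_S,0,\alpha_T]$ is obtained simply by restricting the left action. Decompose $T=\bigoplus_{n\ge1}T_n$ with $T_n$ the $n$th copy of $\ker\varphi$. Then the left action of $(j,a,f)\in I$ on $T$ sends $g=(g_1,g_2,\dots)$ to $((j+a)g_1,f_1g_2,f_2g_3,\dots)=(ag_1,f_1g_2,f_2g_3,\dots)$, because $j\in J_X\subseteq\ker\varphi^\perp$ and $g_1\in\ker\varphi$; hence $J_X$ annihilates $T$, the middle copy of $\ker\varphi$ acts by left multiplication on $T_1$ and trivially on the remaining summands, and $K_m$ acts by left multiplication on $T_{m+1}$ (under $T_{m+1}\cong\ker\varphi\cong K_m$) and trivially on the remaining summands. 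Discarding the degenerate (zero-action, zero-operator) summands, and passing to the essential submodule of the one that survives (\cite[Section~17.5]{B}), one finds $\iota_{J_X}^*[T]=0$; $\iota_S^*[T]=[\iota_{K_1}]$ for $S$ the middle $\ker\varphi$; and $\iota_{K_m}^*[T]=[\iota_{K_{m+1}}]$. Indeed, in each nonzero case the surviving summand is a copy of $\ker\varphi$ with left action by multiplication and right action via $K_k\hookrightarrow A\oplus K_\varphi^\infty$ (with $k=1$, respectively $k=m+1$), which is exactly the essential submodule of the correspondence defining the class $[\iota_{K_k}]$ of the coordinate inclusion $\ker\varphi\cong K_k\hookrightarrow A\oplus K_\varphi^\infty$.

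Feeding this into the preceding step and using $[\psi]\hatimes\gamma=\psi^*\gamma$, we get $\iota_{J_X}^*([T]\hatimes\gamma)=0$; the middle-$\ker\varphi$ coordinate of $\zeta([T]\hatimes\gamma)$ equals $\iota_{K_1}^*\gamma=f_1$; and the $K_m$-coordinate equals $\iota_{K_{m+1}}^*\gamma=f_{m+1}$. Hence $\zeta([T]\hatimes\gamma)=(0,f_1,f_2,\dots)=\overline U(\overline\zeta\gamma)$, which is the asserted commutativity. I expect the only genuine work to be the bookkeeping in the middle step---restricting the $I$-action to each coordinate algebra, determining which of the $T_n$ carry zero action and hence drop out of the $KK$-class, and recognising the single surviving summand, after passage to its essential submodule, as the correspondence of a coordinate inclusion. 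Everything else is the formal calculus of pullbacks and associativity of the Kasparov product, together with the compatibility of $\overline\zeta$ with pullbacks recorded at the start of Section~\ref{sec:additive}.
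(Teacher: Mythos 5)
Your proof is correct and follows essentially the same route as the paper's: both reduce to restricting the left $I$-action on $T$ coordinatewise, observing that $J_X$ annihilates $T$ (since $J_X\subseteq\ker\varphi^\perp$) while each copy of $\ker\varphi$ acts only on the next summand, so the coordinates shift by one and the $A$-component is discarded. The only difference is organisational: you first identify the restricted classes $\iota_S^*[T]$ with classes of coordinate inclusions and then invoke functoriality, whereas the paper decomposes the tensor product $T\hatimes Z$ of representative modules directly --- the underlying calculation is identical.
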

\begin{proof}
As discussed at the beginning of Section~\ref{sec:additive}, if $\iota_i : \ker \varphi \to
K^\infty_\varphi$ is the inclusion into the $i$th coordinate, then the right-hand map $\zeta$
carries the class $[Z]$ of a Kasparov $(A \oplus K^\infty_\varphi)$--$B$-module to $\big([A \cdot
Z], ([\iota_i(\ker \varphi)\cdot Z])^\infty_{i=1}\big)$. Likewise, the right-hand map takes $[W]$
to $\big([J_X \cdot W], [\ker \varphi \cdot W], ([\iota_i(\ker \varphi) \cdot
W])^\infty_{i=1}\big)$.

Regard $T$ as a right $(J_X \oplus K^\infty_\varphi)$-module, and take $W = T \hatimes Z$. The
left action of $K^\infty_\varphi$ on $Z$ is given by the inclusion $K^\infty_\varphi
\hookrightarrow A \oplus K^\infty_\varphi$, and so $T \hatimes (A \oplus 0)\cdot Z = 0$. For each
$i \ge 1$, we have $T \hatimes (\iota_i(\ker \varphi)) \cdot Z \cong \ker \varphi
\otimes_{\ker\varphi} (\iota_i(\ker \varphi)) \cdot Z \cong \iota_i(\ker \varphi) \cdot Z$ as a
right module. The left action of $J_X \oplus \ker\varphi$ on $T \hatimes (0 \oplus
\iota_1(\ker\varphi)) \cdot Z$ restricts to the zero action of $J_X$ because $J_X \subseteq
\ker\varphi^\perp$, and restricts to the standard action of $\ker\varphi$. So we see that $W$ is
isomorphic to $0 \oplus \bigoplus^\infty_{i=1} \iota_i(\ker\varphi) \cdot Z$ as a right-Hilbert
$J_X \oplus \ker\varphi \oplus K^\infty_\varphi$-module. This isomorphism preserves gradings
because $\iota$ is a graded homomorphism.

In particular, $\iota_{J_X} \cdot W = 0$, each
$\iota_{K^\infty_\varphi}(\iota_i(\ker\varphi))\cdot W \cong \iota_{i+1}(\ker\varphi)\cdot W$, and
 $\iota_{\ker\varphi} \cdot W \cong \iota_1(\ker\varphi)\cdot Z$. Thus $\zeta([W]) = (0,
[\iota_1(\ker\varphi)\cdot Z], [\iota_2(\ker\varphi)\cdot Z], \cdots)$. Since $\zeta([Z]) = ([A
\cdot Z], [\iota_1(\ker\varphi)\cdot Z], [\iota_2(\ker\varphi)\cdot Z], \cdots)$, the result
follows.
\end{proof}

\begin{thm}\label{Theorem: Second half of main thm A}
Let $(A,\alpha_A),(B,\alpha_B)$ be a graded separable $C^*$-algebras, and suppose $A$ is nuclear.
Let $X$ be an essential graded $A$--$A$-correspondence with left action $\varphi$. Let
$J_X=\varphi^{-1}(\Kk(X))\cap\ker\varphi^\perp$, and let $\iota_{J_X}:J_X\to A$ be the inclusion map.
Then there is an exact sequence
\[	
\begin{tikzpicture}[yscale=0.6, >=stealth]
	\node (00) at (0,0) {$KK_1(\Oo_X, B)$};
	\node (40) at (6,0) {$KK_1(A, B)$};
	\node (80) at (12,0) {$KK_1(J_X, B)$};
	\node (82) at (12,2) {$KK_0(\Oo_X, B)$};
	\node (42) at (6,2) {$KK_0(A, B)$};
	\node (02) at (0,2) {$KK_0(J_X, B)$};
	\draw[<-] (02)-- node[above] {${\scriptstyle{([\iota_{J_X}] - [X]) \hatimes_A}}$} (42);
	\draw[<-] (42)-- node[above] {${\scriptstyle i^*}$} (82);
	\draw[<-] (82)--(80);
	\draw[<-] (80)-- node[above] {${\scriptstyle{([\iota_{J_X}] - [X]) \hatimes_A}}$} (40);
	\draw[<-] (40)-- node[above] {${\scriptstyle i^*}$} (00);
	\draw[<-] (00)--(02);
	\end{tikzpicture}
\]
\end{thm}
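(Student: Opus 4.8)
The plan is to run the adding-tails argument of the previous subsection, but now in the \emph{first} variable, where countable additivity of $KK(\,\cdot\,,B)$ is automatic (Theorem~19.7.1 of \cite{B}); this makes the argument considerably cleaner than the covariant case, because the finitely-supported recursion appearing in the proof of Theorem~\ref{Theorem: First half of main thm A} gets replaced by a genuine isomorphism. First I would form the Muhly--Tomforde module $Y=X\oplus T$ over $A\oplus K_\varphi^\infty$, whose left action $\varphiY$ is injective by \cite[Lemma~4.2]{MT}, and identify $I:=(\varphiY)^{-1}(\Kk(Y))$ with $J_X\oplus\ker\varphi\oplus K_\varphi^\infty$ via Proposition~\ref{ideal.I}. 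Since $\varphiY$ is injective, we have at hand the exact sequence~\eqref{eq:contravariant}, and applying Theorem~19.7.1 of \cite{B} to the $c_0$-direct sums $A\oplus K_\varphi^\infty$ and $I$ identifies its groups with
\[
KK(A\oplus K_\varphi^\infty,B)\cong KK(A,B)\oplus\prod_{n=1}^\infty KK(\ker\varphi,B),\qquad
KK(I,B)\cong KK(J_X,B)\oplus KK(\ker\varphi,B)\oplus\prod_{n=1}^\infty KK(\ker\varphi,B).
\]

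Under these identifications $[\iota_I]\hatimes$ is the obvious map (restriction along $J_X\oplus\ker\varphi\hookrightarrow A$ on the first two summands, the identity on the product), $[X]\hatimes$ lands in the $KK_*(J_X,B)$ summand, where it becomes $[X]\hatimes_A$ for the class $[X]\in KK(J_X,A)$ of the set-up, and --- the crucial input --- Proposition~\ref{Prop:T tensor description} identifies $[T]\hatimes$ with the shift $\overline U$. Combining the three, the map $([\iota_I]-[X]-[T])\hatimes$ becomes
\[
\Phi\bigl(a,(f_n)_{n\ge1}\bigr)=\bigl(\,([\iota_{J_X}]-[X])\hatimes a,\ \rho(a)-f_1,\ (f_n-f_{n+1})_{n\ge1}\,\bigr),
\]
where $\rho\colon KK(A,B)\to KK(\ker\varphi,B)$ is restriction. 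The key point is that the ``shift minus identity'' map $\Psi\colon\prod_{n}KK(\ker\varphi,B)\to KK(\ker\varphi,B)\oplus\prod_{n}KK(\ker\varphi,B)$ given by $(f_n)\mapsto(-f_1,(f_n-f_{n+1})_n)$ is an \emph{isomorphism}: it is clearly injective, and given $(c,(g_n))$ in the target the recursively defined sequence $f_1:=-c$, $f_{n+1}:=f_n-g_n$ is a preimage --- here, in contrast with the covariant case, we genuinely need the full product rather than a direct sum. Absorbing the off-diagonal term $\rho$ into a (unipotent) automorphism of $KK(A,B)\oplus\prod_{n}KK(\ker\varphi,B)$, the map $\Phi$ becomes the direct sum of $([\iota_{J_X}]-[X])\hatimes$ with the isomorphism $\Psi$.

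It then remains to invoke the elementary homological fact that a six-term exact sequence in which one of the three maps decomposes as $\phi\oplus(\text{isomorphism})$ collapses to a six-term exact sequence with that map replaced by $\phi$ and the two neighbouring maps replaced by their canonical compression and inclusion; since the compressions and inclusions are compatible with the splittings above, precomposing $i^\ast$ with the unipotent automorphism and compressing gives exactness of
\[
\cdots\longrightarrow KK_*(\Oo_Y,B)\xrightarrow{\ i^\ast\ }KK_*(A,B)\xrightarrow{\ ([\iota_{J_X}]-[X])\hatimes\ }KK_*(J_X,B)\longrightarrow\cdots .
\]
Finally I would apply \cite[Theorem~4.3]{MT}: $\Oo_X\cong Q\Oo_Y Q$ for a full, trivially graded projection $Q\in\Mm(\Oo_Y)$, so $\Oo_Y Q$ is a graded imprimitivity bimodule and $KK_*(\Oo_Y,B)\cong KK_*(\Oo_X,B)$, under which the compressed $i^\ast$ becomes restriction along the canonical embedding $A\hookrightarrow\Oo_X$; substituting yields the stated sequence. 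I expect the main obstacle to be the bookkeeping in the second paragraph --- pinning down precisely how $i^\ast$, $[\iota_I]\hatimes$, $[X]\hatimes$ and $[T]\hatimes$ intertwine with the countable-additivity isomorphisms --- together with the final check that the Morita identification carries the compressed map exactly onto $i^\ast\colon KK_*(\Oo_X,B)\to KK_*(A,B)$ rather than a twist of it.
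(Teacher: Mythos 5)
Your proposal is correct and follows the same overall route as the paper: form the Muhly--Tomforde module $Y=X\oplus T$, identify $(\varphiY)^{-1}(\Kk(Y))$ via Proposition~\ref{ideal.I}, apply the injective-case sequence~\eqref{eq:contravariant}, use Theorem~19.7.1 of \cite{B} and Proposition~\ref{Prop:T tensor description} to write the middle map in coordinates, and finish with the graded Morita equivalence $\Oo_X\cong Q\Oo_Y Q$ from \cite[Theorem~4.3]{MT}. Where you genuinely diverge is in how the six-term sequence for $Y$ is collapsed to the one for $X$: the paper runs three separate exactness checks around a ten-term diagram built from the projection $P$ and inclusion $\ell$, whereas you observe that $([\iota_I]-[X]-[T])\hatimes$ is block lower-triangular with diagonal blocks $([\iota_{J_X}]-[X])\hatimes$ and the shift-minus-identity map $\Psi$, prove $\Psi$ is an isomorphism on the full product, absorb the off-diagonal $\rho$ into a unipotent automorphism fixing the first coordinate (so the compressed $i^*$ is unchanged), and invoke the general fact that splitting an isomorphism off one map of an exact sequence collapses it. This is valid and arguably cleaner: your $\Psi^{-1}$ applied to $(j_0,j_1,\dots)$ is exactly the element $\eta=(0,-j_0,-j_0-j_1,\dots)$ that the paper constructs by hand in its exactness check at $KK_*(\Oo_Y,B)$, and you correctly isolate the reason the contravariant case is easier than the covariant one --- invertibility of $\Psi$ requires the full product, which is precisely what countable additivity in the first variable provides. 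The only bookkeeping you flag but do not carry out (that the Morita identification carries $P\circ i^*$ onto $i^*\colon KK_*(\Oo_X,B)\to KK_*(A,B)$ on the nose) is likewise left essentially implicit in the paper.
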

\begin{proof}
The argument is similar to that of Theorem~\ref{Theorem: First half of main thm A}. Since the left
action $\varphi^{A \oplus K^\infty_\varphi}$ on $Y$ is injective, if we write $I := (\varphi^{A
\oplus K^\infty_\varphi})^{-1}(\Kk(Y))$ and $\iota_I : I \to A \oplus K^\infty_{\varphi}$ for the
inclusion, then the exact sequence~\eqref{eq:contravariant} fits as the central rectangle in the
following diagram whose top and bottom rectangles commute:
\[
\begin{tikzpicture}[xscale=0.9, yscale=0.4, >=stealth]
	\node (00) at (0,0) {$KK_1(\Oo_Y, B)$};
	\node (40) at (4.5,0) {$KK_1(A\oplus K^\infty_\varphi, B)$};
	\node (80) at (12,0) {$KK_1(J_X\oplus\ker\varphi\oplus K^\infty_\varphi, B)$};
	\node (82) at (12,4) {$KK_0(\Oo_Y, B)$};
	\node (42) at (7.5,4) {$KK_0(A\oplus K^\infty_\varphi, B)$};
	\node (02) at (0,4) {$KK_0(J_X\oplus\ker\varphi\oplus K^\infty_\varphi, B)$};
	
	\node (04) at (0,8) {$KK_0(J_X, B)$};
	\node (44) at (7.5,8) {$KK_0(A, B)$};
	\node (8-4) at (12,-4) {$KK_1(J_X, B)$};
	\node (4-4) at (4.5,-4) {$KK_1(A, B)$};
	
	\draw[<-] (02)-- node[above] {${\scriptstyle{([\iota_I] - [X] - [T]) \hatimes_A}}$} (42);
	\draw[<-] (42)-- node[above] {${\scriptstyle i^*}$} (82);
	\draw[<-] (82)-- node[right] {${\scriptstyle\partial}$} (80);
	\draw[<-] (80)-- node[above] {${\scriptstyle{([\iota_I] - [X] - [T]) \hatimes_A}}$} (40);
	\draw[<-] (40)-- node[above] {${\scriptstyle i^*}$} (00);
	\draw[<-] (00)-- node[left] {${\scriptstyle\partial}$} (02);
	
	\draw[<-] (02)-- node[left] {{$\scriptstyle \ell$}} (04);
	\draw[<-] (04)-- node[above] {${\scriptstyle([\iota_{J_X}]-[X]) \hatimes_A}$} (44);
	\draw[<-] (44)-- node[right] {{$\scriptstyle P$}} (42);
	\draw[<-] (80)-- node[right] {{$\scriptstyle \ell$}} (8-4);
	\draw[<-] (8-4)-- node[above] {${\scriptstyle([\iota_{J_X}]-[X]) \hatimes_A}$} (4-4);
	\draw[<-] (4-4)-- node[left] {{$\scriptstyle P$}} (40);
\end{tikzpicture}
\]
To prove the result, we show that the six-term sequence consisting of the six extreme points of
this diagram is exact; the result will again follow from the graded Morita equivalence of $\Oo_X$
and $\Oo_Y$.

Throughout this proof, without further comment, we identify $KK_*(J_X \oplus \ker\varphi \oplus
K^\infty_\varphi, B)$ with $KK_*(J_X, B) \oplus KK_*(\ker\varphi, B) \oplus \prod^\infty_{i=1}
KK_*(\ker\varphi, B)$ and we identify $KK_*(A \oplus K^\infty_\varphi, B)$ with $KK_*(A, B) \oplus
\prod^\infty_{i=1} KK_*(\ker\varphi, B)$ as discussed at the beginning of
Section~\ref{sec:additive}.

For exactness at $KK_*(A, B)$, observe that $\Img(P \circ i^*) = P\big(\ker(([\iota_I] - [X] -
[T]) \hatimes \cdot)\big)$. By Proposition~\ref{Prop:T tensor description}, and using that $\ker
\varphi$ annihilates $X$, we see that for any $(a, j_1, j_2, \cdots) \in KK_*(A \oplus
K^\infty_\varphi, B)$, we have
\[
([\iota_I] - [X] - [T]) \hatimes_A (a, j_1, j_2, \dots)
    = (([\iota_{J_X}] - [X]) \hatimes a, [\ker\varphi] \hatimes a - j_1, j_1 - j_2, \dots).
\]
So $\ker(([\iota_I] - [X] - [T]) \hatimes \cdot)$ is the set of sequences $(a, j, j, j, \dots)$
such that $([\iota_{J_X}] - [X]) \hatimes_A a = 0$ and $j = [\ker\varphi] \hatimes a$. In
particular, $P\big(\ker(([\iota_I] - [X] - [T]) \hatimes \cdot)\big) = \ker([\iota_{J_X}] - [X])$
as required.

For exactness at $KK_*(J_X, B)$ first observe that $\ell$ is given by $\ell([j]) = ([j], 0, 0,
\dots)$. So $\ker(\partial \circ \ell) = \{[j] : ([j], 0, 0, \dots) \in \ker(\partial)\} = \{([j],
0, 0, \dots) : [j] \in \Img(([\iota_I] - [X] - [T]) \hatimes_A \cdot)\}$. By the description of
the map $([\iota_I] - [X] - [T]) \hatimes \cdot$ in the preceding paragraph, we see that if $([j],
0, 0, \cdots) = ([\iota_I] - [X] - [T]) \hatimes_A (a, j_1, j_2, \cdots)$, then $[j] =
([\iota_{J_X}] - [X]) \hatimes_A a$. Conversely, given $a \in A$, since the rectangles involving
$P$ and $\ell$ commute and since the maps $P$ are surjective, $\ell(([\iota_{J_X}] - [X])
\hatimes_A a) \in \Img(([\iota_I] - [X] - [T]) \hatimes_A \cdot) = \ker(\partial)$, and so the
image of $([\iota_{J_X}] - [X]) \hatimes_A$ is contained in the kernel of $\partial \circ \ell$.

It remains to establish exactness at $KK_*(\Oo_Y, B)$. By exactness of~\eqref{eq:contravariant},
$\Img(i^*) = \ker\big(([\iota_I] - [X] - [T]) \hatimes \cdot\big)$. As we saw earler, this is the
collection of sequences $(a, j, j, j, \dots)$ such that $([\iota_{J_X}] - [X]) \hatimes_A a = 0$
and $j = [\ker\varphi] \hatimes a$. In particular, if $(P \circ i^*)(x) = 0$, then $i^*(x) = (0,
j, j, \cdots)$ with $j = [\ker\varphi] \hatimes 0 = 0$, and hence $i^*(x) = 0$. That is, $\ker(P
\circ i^*) = \ker(i^*)$. Since~\eqref{eq:contravariant} is exact, it now suffices to show that
$\Img(\partial \circ \ell) = \Img \partial$. We clearly have $\Img(\partial \circ \ell) = \Img
\partial$, so it suffices to show the reverse containment. For this, fix $\theta = (j_X, j_0, j_1,
j_2, \dots) \in KK_*(J_X \oplus \ker\varphi \oplus K^\infty_\varphi, B)$, so that
$\partial(\theta)$ is a typical element of $\Img(\partial)$. Consider the element $\eta := (0,
-j_0, -j_0 - j_1, -j_0 - j_1 - j_2, \cdots) \in KK_*(A \oplus K^\infty_\varphi, B)$. We have
$([\iota_I] - [X] - [T]) \hatimes \eta = (0, j_0, j_1, j_2 \cdots)$. In particular, $\theta =
([\iota_I] - [X] - [T]) \hatimes \eta + (j_X, 0, 0, 0 \cdots) = ([\iota_I] - [X] - [T]) \hatimes
\eta + \ell(j_X)$. Since~\eqref{eq:contravariant} is exact, $\partial(\theta) =
\partial(([\iota_I] - [X] - [T]) \hatimes \eta) + \partial(\ell(j_X)) = \partial\circ\ell(j_X)$.
So $\partial(\theta) \in \Img(\partial\circ\ell)$ as required.
\end{proof}

\end{document}